\newtheorem{theorem}{Theorem}[section]
\newtheorem{lemma}[theorem]{Lemma}
\newtheorem{corollary}[theorem]{Corollary}
\theoremstyle{plain}
\newtheorem{definition}[theorem]{Definition}
\newtheorem{example}[theorem]{Example}
\newtheorem{remark}[theorem]{Remark}
\newtheorem{hypothesis}[theorem]{Hypothesis}
\newtheorem{question}[theorem]{Question}
\let\olddefin\definition
\renewcommand{\definition}{\olddefin\normalfont}
\let\oldlemma\lemma
\renewcommand{\lemma}{\oldlemma\normalfont}
\let\oldtheorem\theorem
\renewcommand{\theorem}{\oldtheorem\normalfont}
\let\oldcoro\corollary
\renewcommand{\corollary}{\oldcoro\normalfont}
\let\oldquestion\question
\renewcommand{\question}{\oldquestion\normalfont}
\let\oldremark\remark
\renewcommand{\remark}{\oldremark\normalfont}
\let\oldexample\example
\renewcommand{\example}{\oldexample\normalfont}
\let\oldhypothesis\hypothesis
\renewcommand{\hypothesis}{\oldhypothesis\normalfont}
\theoremstyle{definition}
\theoremstyle{remark}
\numberwithin{equation}{section}
\begin{document}

\title{K-theoretic Chow groups of derived categories of schemes \\ \vspace{3 mm} \small {On a question by Green-Griffiths}}

\author{Sen Yang}
\address{Yau Mathematical Sciences Center \\
Tsinghua University \\
Beijing, China\\
}
\email{syang@math.tsinghua.edu.cn; senyangmath@gmail.com}

\subjclass{05B35}
\date{}

\maketitle

\begin{abstract}
Based on Balmer's tensor triangular Chow group [5], we propose K-theoretic Chow groups of derived categories of noetherian schemes and their Milnor variants for regular schemes and their thickenings. We discuss functoriality and show that our Chow groups agree with the classical ones [12] for regular schemes. We also define tangent spaces to our Chow groups as usually and identify them with cohomology groups of absolute differentials. 

Moreover, we extend Bloch-Quillen identification from regular schemes to their thickenings. This gives a positive answer to a question by Green-Griffiths, see question 1.1 below. We continue exploring the geometry of these K-theoretic Chow groups in forthcoming papers.
  
\end{abstract}

\tableofcontents

\section{Introduction}
\label{Introduction}

The study of Chow groups of algebraic cycles is a central topic in algebraic geometry. 
Relating with several important conjectures, including Hodge conjecture and Bloch-Beilinson conjecture,
Chow groups are very difficult to study. 

Recently, Green-Griffiths made progress on studying tangent spaces to Chow groups [15,16]. Fundamental to their work is the Soul\'e  variant of the Bloch-Quillen identification
\[
 CH^{p}(X)= H^{p}(X,\underline{K}^{M}_{p}(O_{X})) \ mod \ torsion,
\]
here $X$ is a smooth projective variety over $\mathbb{C}$, $\underline{K}^{M}_{p}(O_{X})$ is the 
Milnor K-theory sheaf associated to the presheaf
\[
  U \to K^{M}_{p}(O_{X}(U)).
\]

In [15], Green-Griffiths suggested that it would be interesting to extend Bloch-Quillen
 identification to infinitesimal thickening $(X,O_{X}[t]/(t^{m+1}))$. 
\begin{question} Green-Griffiths

Let $X_{m}$ denote the thickening $(X,O_{X}[t]/(t^{m+1}))$, do we have the following identification?
{\small
\[
 CH^{p}(X_{m})= H^{p}(X_{m},\underline{K}^{M}_{p}(O_{X}[t]/(t^{m+1}))) \ mod \ torsion.
\]
}
\end{question}

The answer is ``No'' for the calssical Chow groups, since the classical Chow groups can't detect nilpotent. To be precise, we have
\[
 CH^{p}(X)=CH^{p}(X_{m}).
\]

On the other hand, we know that
\[
 H^{p}(X,\underline{K}^{M}_{p}(O_{X}))\neq H^{p}(X_{m},\underline{K}^{M}_{p}(O_{X}[t]/(t^{m+1}))).
\]
In fact, 
{\small
\[
 H^{p}(X_{m},\underline{K}^{M}_{p}(O_{X}[t]/(t^{m+1}))) = H^{p}(X,\underline{K}^{M}_{p}(O_{X})) \oplus H^{p}(X,(\Omega_{X/ \mathbb{Q}}^{p-1})^{\oplus m}).
\]
}
here $\Omega_{X/ \mathbb{Q}}^{p-1}$ is the absolute differentials. 

This inspires us to propose a new definition of Chow groups capturing the nilpotent which is useful for studying deformation problems. Our starting point is to look at the derived category $D^{perf}(X)$ obtained from the exact category of perfect complexes of $O_{X}$-modules. It is obvious that the derived category $D^{perf}(X)$ is different from $D^{perf}(X_{m})$. 

In our approach, the derived category $D^{perf}(X)$ is considered as a tensor triangulated category, see [example 2.2]. Now, we turn to another side of this story, tensor triangular geometry. Mainly developed by P.Balmer and his collaborators, tensor triangular geometry is the study of tensor triangulated categories by algebraic geometry methods. Although a relatively new subject in its early stage, tensor triangular geometry has shown its power in studying algebraic geometry, modular representation theory and etc. For a good survey of this wonderful subject and its achievement, we refer the readers to Balmer's ICM talk [6].

It is Balmer's beautiful reconstruction theorem [1,Theorem 6.3] that opens the new land of this exciting theory. As recalled below, Balmer's reconstruction theorem says that a scheme can be reconstructed from its associated tensor triangulated category $D^{perf}(X)$.

\begin{theorem}

[Balmer]

Let $X$ be a quasi-compact and quasi-seperated scheme. We have an isomorphism $Spec(D^{perf}(X)) \simeq X$ of ringed spaces.
\end{theorem}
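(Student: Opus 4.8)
The plan is to recover the scheme $X$ from $D^{perf}(X)$ in two stages: first reconstruct the underlying topological space as the spectrum of prime thick $\otimes$-ideals, and then reconstruct the structure sheaf via endomorphism rings of the unit object localized over that space. I would begin by recalling the definition of $Spec(\mathcal{K})$ for a tensor triangulated category $\mathcal{K}$: its points are the proper thick $\otimes$-ideals $\mathcal{P} \subsetneq \mathcal{K}$ that are prime in the sense that $a \otimes b \in \mathcal{P}$ implies $a \in \mathcal{P}$ or $b \in \mathcal{P}$, topologized by the closed sets $\mathrm{supp}(a) = \{\mathcal{P} : a \notin \mathcal{P}\}$. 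The first key step is to construct a continuous map $f \colon X \to Spec(D^{perf}(X))$ sending a point $x \in X$ to the subcategory $\mathcal{P}_x$ of perfect complexes whose stalk (equivalently, whose image in $D^{perf}(\mathcal{O}_{X,x})$) is acyclic; one checks this is a prime thick $\otimes$-ideal using that the support of a perfect complex, in the classical sense $\mathrm{supph}(E) = \{x : E_x \not\simeq 0\}$, is a closed subset and behaves multiplicatively under derived tensor product.

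The second step is to show $f$ is a homeomorphism. Injectivity and continuity are relatively direct from the local nature of acyclicity; the substantive input is surjectivity together with the identification of the topology, for which I would invoke the Hopkins--Neeman--Thomason classification of thick $\otimes$-ideals of $D^{perf}(X)$ in terms of Thomason-closed (i.e., specialization-closed, arbitrary unions of closed-with-quasi-compact-complement) subsets of $X$. This classification says precisely that thick $\otimes$-ideals correspond bijectively, order-preservingly, to such subsets via $E \mapsto \mathrm{supph}(E)$, and a standard lattice-theoretic argument then shows the primes correspond exactly to the irreducible closed subsets, hence to points of $X$ by sobriety (here quasi-separatedness and quasi-compactness ensure we are in the Thomason setting). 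This is the step I expect to be the main obstacle, since it is where the heavy machinery enters and where one must be careful that $D^{perf}(X)$, not the larger derived category of quasi-coherent sheaves, is the right category for the statement to come out as $X$ on the nose rather than something larger.

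Having matched topological spaces, the final step is to reconstruct $\mathcal{O}_X$. I would define a sheaf of rings on $Spec(D^{perf}(X))$ by sheafifying the presheaf $U \mapsto \mathrm{End}_{\mathcal{K}_U}(\mathbf{1}_U)$, where $\mathcal{K}_U = D^{perf}(X)/\mathcal{K}_{Z}$ is the Verdier quotient by the thick $\otimes$-ideal supported on the complement $Z$ of (a quasi-compact open) $U$, with $\mathbf{1}$ the unit object $\mathcal{O}_X$ itself. One then checks that this quotient is equivalent to $D^{perf}(U)$ for $U$ affine (or quasi-compact open with quasi-compact complement), so that $\mathrm{End}(\mathbf{1}_U) = \Gamma(U, \mathcal{O}_U)$, and naturality of these identifications in $U$ gives an isomorphism of sheaves of rings, hence of ringed spaces $(X, \mathcal{O}_X) \simeq (Spec(D^{perf}(X)), \mathcal{O}_{D^{perf}(X)})$. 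Throughout, the compatibility of the two reconstructions — that the endomorphism-ring sheaf really is supported where the support theory says it should be — follows from tracking supports through Verdier localization, which is routine once the classification theorem is in hand.
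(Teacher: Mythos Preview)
The paper does not actually prove this theorem: it is stated in the introduction as a result quoted from Balmer's original paper \cite{B-1} (there, Theorem~6.3), with no argument given beyond the citation. So there is no ``paper's own proof'' to compare against.

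That said, your outline is a faithful sketch of how Balmer's proof actually proceeds: the map $x \mapsto \mathcal{P}_x = \{E \in D^{perf}(X) : E_x \simeq 0\}$ is shown to be a homeomorphism by reducing to the Hopkins--Neeman--Thomason classification of thick $\otimes$-ideals by Thomason subsets of $X$, and the structure sheaf is recovered from endomorphisms of the unit in the local categories. Your identification of the classification theorem as the substantive input is exactly right, and the sobriety argument for matching primes with points is the standard one. If anything, one should note that in Balmer's original the result is stated for topologically noetherian schemes; the quasi-compact quasi-separated version as stated here relies on Thomason's more general classification, which you correctly invoke.
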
  

Since one can reconstruct the scheme $X$ from the tensor triangulated category $D^{perf}(X)$, then it is reasonable to define Chow groups in terms of $D^{perf}(X)$, considered as a tensor triangulated category. One would like to have a functor $CH_{q}(D^{perf}(-))$ for schemes with good functorial properties, flat pull-back , proper push-forward and etc.
Moreover, one should have $CH_{q}(D^{perf}(X))= CH^{q}(X)$ for ``nice" schemes.

Such a construction has been proposed in [5] by P.Balmer and followed by S.Klein [19]. Balmer's new insight is to allow the coefficients of  {\it q-cycles} to lie in Grothendieck groups of suitable triangulated categories. To be precise, one filters tensor triangulated category $D^{perf}(X)$ by (-co)dimension of support. Then {\it q-cycles} is defined to be Grothendieck groups of idempotent completion of the {\it q-th} Verdier quotient of the filtration. See [definition 2.13, 2.14] for precise definitions.  

As pointed out in [4], taking idempotent completion can result in the appearance of negative K-groups. 
In order to include this important information into our study, we propose our K-theoretic Chow groups of $D^{perf}(X)$ by slightly modifying Balmer's, see [definition 3.6]. Our Chow groups indeed are subgroups of Balmer's.
Moreover, our Chow groups are cohomology groups of Gersten complexes. This is also guided by Quillen's proof of Bloch's formula [25].

Our main results are as follows.
\begin{itemize}

 \item Definitions. We propose definitions of K-theoretic Chow groups of derived categories of noetherian schemes, see definition 3.6. And we also define Milnor K-theoretic Chow groups of derived categories of regular schemes and thickenings, see definition 4.4 and 4.30. Milnor Chow groups of 0-cycles are discussed in appendix.
 \item Functoriality. Flat pull-back and proper push-forward are discussed in section 3.3.
 \item Agreement. We show that our (Milnor)Chow groups of derived categories agree with the classical ones for regular schemes, see theorem 3.8, theorem 4.31 and theorem 5.2.
 \item Bloch's formula. We show our (Milnor)Chow groups of derived categories satisfy Bloch's formula for regular schemes and their thickenings, see theorem 4.25 and theorem 4.32. This provides a positive answer to the above question by Green-Griffiths.
 \item We define tangent spaces to our Chow groups as usually, while the classical Chow groups can't do. We also identify tangent spaces to our Chow groups with cohomology groups of absolute differentials. See definition 4.26, theorem 4.27, definition 4.33 and theorem 4.34. 
\end{itemize}

The search of generalized {\it cycles} and {\it Chow groups} is also motivated by intersection theory. 
The classical construction of the Chow ring fails when one deals with singular algebraic varieties. Among others, Levine-Weibel and Pedrini-Weibel defined relative Chow groups [20,23]. They also proved their relative Chow groups satisfied Bloch's formula [21,24]. Comparing our Chow groups of derived categories with relative Chow groups, we suggest a question (on page 15) which might be closely related with Gersten conjecture.

The idea of using derived category to define {\it algebraic cycles} was also suggested by Thomason [31].  To honor Thomason whose higher K-theory of derived categories of schemes has been widely accepted, we cite his idea here: `` I seek to define a good intersection ring of
‘algebraic cycles’ on schemes X where the classical construction of the Chow
ring fails, for example on singular algebraic varieties or on regular schemes flat
and of finite type over Z. Inspired by the superiority of Cartier divisors over
Weil divisors and by recent progress in local intersection theory, I believe the good notion of ‘algebraic n-cycle’ is that of those perfect complexes in some triangulated subcategory $A^{n} \subseteq D(X)_{parf}$ which remains to be defined."

We shall not discuss intersection theory in this paper. For a good survey of Chow groups and intersection theory, we refer to Gillet [14].

\textbf{Acknowledgements}
I sincerely thank professor P.Balmer for precious discussions and correspondence. His work on Chow groups of tensor triangulated categories [5] and previous work on tensor triangular geometry [1,2,3,4] make our paper become possible. I also sincerely thank professor M.Schlichting for teaching me K-theory and for inviting me to visit Warwick Mathematics Institute where some work have been done. Thanks Warwick Mathematics Institute for its hospitality. Thanks professor B.Totaro, professor C.Pedrini and S.Klein for helpful comments.

I am very grateful to professor G.Green and professor P.Griffiths for enlightening discussions and for telling me their paper [15] which started this paper.

Many thanks to J.W.Hoffman and B.Dribus for collaboration whose contribution go beyond [11]. Many thanks to department of mathematics of Louisiana State University for financial support. The computational results of 4.2 and 4.3 have been done in my thesis [36].

\textbf{Notations and conventions}.
$X$ is a $d$-dimensional noetherian scheme of finite type over a field $k$, if not stated otherwise. $Speck[\varepsilon]$ denotes the dual number, $\varepsilon^{2}=0$.

\section{Tensor triangular geometry}
\label{Tensor triangular geometry}
We begin with recalling basic definitions and examples of tensor triangular geometry in section 2.1. Following [5], we recall Balmer's K-theoretic Chow groups of tensor triangulated categories in section 2.2.
\subsection{Background}
\label{Background}

\begin{definition}[6]
A tensor triangulated category $(\mathcal{L}, \otimes, \amalg)$ is a triangulated category
 $\mathcal{L}$ equipped with a monoidal structure: $\mathcal{L} \otimes \mathcal{L} \to \mathcal{L}$ with 
unit object $\amalg$. We assume that $-\otimes-$ exact in each variable, i.e. both functors $ a \otimes -:\mathcal{L} \to \mathcal{L}$ and $- \otimes a:\mathcal{L} \to \mathcal{L}$ are exact for every $a \in \mathcal{L}$. Let $\sum$ denote the {\it suspension} of $\mathcal{L}$, we assume that natural isomorphisms $(\sum a)\otimes b \cong \sum (a \otimes b)$ and $ a \otimes (\sum b) \cong \sum (a \otimes b)$ compatible in that the two ways from $(\sum a)\otimes (\sum b)$ to $\sum^{2}(a \otimes b)$ only differ by a sign.

Although some of the theory holds without further assumption, we are going
to assume moreover that  is symmetric monoidal : $ a \otimes b = b \otimes a $.

\end{definition}

Examples of tensor triangulated categories can be found from algebraic geometry, motivic theory, modular representation theory etc. For our main interest, we recall the following standard example from algebraic geometry. More examples have been discussed in Balmer's ICM talk[6].

\begin{example}[6]
Let $X$ be a scheme, here always assumed quasi-compact and quasi-separated
(i.e. $X$ admits a basis of quasi-compact open subsets). A complex of $O_{X}$-modules is called perfect if it is locally quasi-isomorphic to a bounded complex of finite generated
projective modules. Then $\mathcal{L} = D^{perf}(X)$, the derived category of perfect
complexes over X, is a tensor triangulated category. See SGA6 [26] or Thomason [31]. The tensor $\otimes = \otimes ^{L}_{O_{X}}$ is the left derived tensor product and the unit $\amalg$ is $O_{X}$, considered as a complex concentrated  in degree $0$. 

When $X = Spec(A)$ is affine, $\mathcal{L} = D^{perf}(X) \cong K^{b}(A-proj)$, is the homotopy category
of bounded complexes of finite generated projective $A$-modules.
\end{example}

Another interesting example is Voevodsky's derived category of geometric motives.
\begin{example}[33, 6]
Let $S$ be the spectrum of a perfect field. Then
$\mathcal{L} = DM_{gm}(S)$, Voevodsky's derived category of geometric motives over $S$, is a
tensor triangulated category. 
\end{example}

The basic idea for studying tensor triangulated categories is to construct a topological space for every 
tensor triangulated category $\mathcal{L}$, called the tensor spectrum of $\mathcal{L}$, in which every object $b$ of $\mathcal{L}$ would have a support.

\begin{definition}[6]
A non-empty full subcategory $\mathcal{J} \subset \mathcal{L}$ is a triangulated subcategory
if for every distinguished triangle $ a \to b \to c \to \sum a$ in $\mathcal{L}$, when two out of $a, b, c$ belong to $\mathcal{J}$, so does the third.

$\mathcal{J}$ is called {\it thick} if it is stable by direct
summands : $a \oplus b \in \mathcal{J} \Rightarrow a, b \in \mathcal{J}$ and triangulated. 

$\mathcal{J}$ is {\it $\otimes$-ideal} if $\mathcal{L} \otimes \mathcal{J} \subset \mathcal{J}$; it is called {\it radical} if $a^{\otimes n} \in \mathcal{J} \Rightarrow a \in \mathcal{J}$.
\end{definition}

\begin{definition}[2]
A thick $\otimes$-ideal $\mathcal{P} \subset \mathcal{L}$ is called {\it prime} if it is proper($\amalg \notin \mathcal{P}$) and if $a \otimes b \in \mathcal{P}$ implies $a \in \mathcal{P}$, $b \in \mathcal{P}$.
\end{definition}

The spectrum of $\mathcal{L}$ is the set of primes:
\begin{align*}
 Spc(\mathcal{L}) = \{ \mathcal{P} \subsetneq \mathcal{L} \mid \mathcal{P} \ is \ a \ prime \}.
\end{align*}

The support of an object $a \in \mathcal{L}$ is defined as :
\[
 supp(a):= \{\mathcal{P} \in Spc(\mathcal{L}) \mid a \not\in  \mathcal{P} \}.
\]
The complement $U(a):= \{\mathcal{P} \in Spc(\mathcal{L}) \mid a \in \mathcal{P} \}$, for all $ a \in \mathcal{L}$, defines an open basis of the topology of $Spc(\mathcal{L})$.

We recall the following useful condition on $\mathcal{L}$ for later use.
\begin{definition}[6]
A tensor triangulated category $\mathcal{L}$ is {\it rigid} if there exists an exact functor $D : \mathcal{L}^{op} \to \mathcal{L}$ and a natural isomorphism $Hom_{\mathcal{L}}(a \otimes b, c) \cong Hom_{\mathcal{L}}(b, Da \otimes c)$ for every $ a, b, c \in \mathcal{L}$.
\end{definition}

\begin{hypothesis}
From now on, we assume our tensor triangulated category $\mathcal{L}$ to be
essentially small, rigid and idempotent complete.
\end{hypothesis}

\begin{definition}[5]
A rigid tensor triangulated category $\mathcal{L}$ is called
{\it local} if $a \otimes b = 0$ implies $a = 0$ or $b = 0$. 
\end{definition}

\begin{example}[5]
For every prime $\mathcal{P} \in Spc(\mathcal{L})$, the following tensor triangulated category
is local in the above sense : 
\[
 \mathcal{L}_{\mathcal{P}} := (\mathcal{L}/\mathcal{P})^{\#},
\]
where $\mathcal{L}/\mathcal{P}$ denote the Verdier quotient and $(-)^{\#}$ the idempotent completion.
\end{example}

\begin{definition}[5]
Assuming that $\mathcal{L}$ is local and that $Spc(\mathcal{L})$ is noetherian, the open
complement of the unique closed point $\{\ast \}$ in $Spc(\mathcal{L})$ is quasi-compact. This 
one-point subset corresponds to the minimal non-zero thick $\otimes$-ideal
\[
 Min(\mathcal{L}) := \{ a \in \mathcal{L} \mid supp(a) \subset \{\ast \} \}.
\]
\end{definition}
These are the objects with minimal possible support (empty or a point).

\subsection{Balmer's K-theoretic Chow group}
\label{Balmer's K-theoretic Chow group}
Now, we recall Balmer's K-theoretic Chow group of tensor triangulated categories. We begin with the definition of  {\it dimension function}.
\begin{definition}[5]

A {\it dimension function} on the space $Spc(\mathcal{L})$ is a map dim: $Spc(\mathcal{L}) \to \mathbb{Z}\cup \{ \pm\infty \}$ satisfying the following two conditions:
  \begin{itemize}

\item $\mathcal{P} \subseteq \mathcal{Q}$ implies $dim(\mathcal{P}) \leq  dim(\mathcal{Q})$.

\item $\mathcal{P} \subseteq \mathcal{Q}$ and  $dim(\mathcal{P}) =  dim(\mathcal{Q}) \in \mathbb{Z}$ 
     imply $\mathcal{P} = \mathcal{Q}$.
     
  \end{itemize}   
  
 Examples are Krull dimension of $\overline{\{ \mathcal{P} \}}$ in $Spc(\mathcal{L})$, or the opposite of its Krull codimension.
\end{definition}

Assuming dim(-) is clear from the context, we shall use the notation
\[
 Spc(\mathcal{L})_{(p)}:= \{\mathcal{P} \in Spc(\mathcal{L}) \mid dim(\mathcal{P}) = p \}.
\]

\begin{theorem}[3]

For a closed subset $Y \subset Spc(\mathcal{L})$, we set $dim(Y) = Sup \{dim(\mathcal{P}) \mid \mathcal{P} \in Y \}$ and consider the filtration $\dots \subset \mathcal{L}_{(p)} \subset \mathcal{L}_{(p+1)} \subset \dots \subset \mathcal{L}$ by dimension of support
\[
  \mathcal{L}_{(p)} := \{a \in \mathcal{L} \mid dim(supp(a)) \leq p \}.
\]
For every integer $p \in \mathbb{Z}$, we have the following equivalence induced by localization
\[
  (\mathcal{L}_{(p)}/\mathcal{L}_{(p-1)})^{\#} \simeq \bigsqcup_{\mathcal{P} \in Spc(\mathcal{L})_{(p)}}Min(\mathcal{L}_{\mathcal{P}})
\]
where $\mathcal{L}_{(p)}/\mathcal{L}_{(p-1)}$ is the Verdier quotient and $(-)^{\#}$ the idempotent completion.
\end{theorem}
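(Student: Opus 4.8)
I would build the asserted equivalence as the functor sending an object to the system of its localizations at the $p$-dimensional primes, and then check that functor is fully faithful and essentially surjective. For each $\mathcal{P} \in Spc(\mathcal{L})_{(p)}$ consider the localization $q_{\mathcal{P}} \colon \mathcal{L} \to \mathcal{L}_{\mathcal{P}} = (\mathcal{L}/\mathcal{P})^{\#}$. I claim $q_{\mathcal{P}}$ sends $\mathcal{L}_{(p)}$ into $Min(\mathcal{L}_{\mathcal{P}})$ and annihilates $\mathcal{L}_{(p-1)}$. Indeed, under the identification $Spc(\mathcal{L}_{\mathcal{P}}) = \{\mathcal{Q} \in Spc(\mathcal{L}) \mid \mathcal{P} \subseteq \mathcal{Q}\}$, one has $supp(q_{\mathcal{P}}(a)) = supp(a) \cap \{\mathcal{Q} \mid \mathcal{P} \subseteq \mathcal{Q}\}$; if $a \in \mathcal{L}_{(p)}$ and $\mathcal{Q}$ belongs to this intersection then $p = dim(\mathcal{P}) \leq dim(\mathcal{Q}) \leq dim(supp(a)) \leq p$, so $\mathcal{Q} = \mathcal{P}$ by the second condition in the definition of a dimension function; hence $supp(q_{\mathcal{P}}(a))$ is contained in the unique closed point $\mathcal{P}$ of $Spc(\mathcal{L}_{\mathcal{P}})$ and $q_{\mathcal{P}}(a) \in Min(\mathcal{L}_{\mathcal{P}})$. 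If $a \in \mathcal{L}_{(p-1)}$, then $dim(supp(a)) < p = dim(\mathcal{P})$ forces $\mathcal{P} \notin supp(a)$, i.e. $a \in \mathcal{P}$, i.e. $q_{\mathcal{P}}(a) = 0$. Since $Spc(\mathcal{L})$ is noetherian, $supp(a)$ meets $Spc(\mathcal{L})_{(p)}$ only in the finitely many generic points of its $p$-dimensional irreducible components, so $q_{\mathcal{P}}(a) = 0$ for all but finitely many $\mathcal{P}$. Assembling the $q_{\mathcal{P}}$ and using that $\bigsqcup_{\mathcal{P}} Min(\mathcal{L}_{\mathcal{P}})$ is idempotent complete, I obtain the comparison functor $\Phi \colon (\mathcal{L}_{(p)}/\mathcal{L}_{(p-1)})^{\#} \to \bigsqcup_{\mathcal{P} \in Spc(\mathcal{L})_{(p)}} Min(\mathcal{L}_{\mathcal{P}})$ induced by localization.

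The next step is a reduction to a local, one-point situation. Replacing $\mathcal{L}$ by $\bar{\mathcal{L}} := \mathcal{L}/\mathcal{L}_{(p-1)}$, one checks that $Spc(\bar{\mathcal{L}})$ is homeomorphic to $\{\mathcal{Q} \in Spc(\mathcal{L}) \mid dim(\mathcal{Q}) \geq p\}$, whose closed points are precisely the elements of $Spc(\mathcal{L})_{(p)}$; that $\bar{\mathcal{L}}_{\mathcal{P}}$ is canonically $\mathcal{L}_{\mathcal{P}}$; and that $\mathcal{L}_{(p)}/\mathcal{L}_{(p-1)}$ is identified with the thick $\otimes$-ideal of $\bar{\mathcal{L}}$ consisting of the objects whose support is a (necessarily finite) set of closed points. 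So I may as well assume $\mathcal{L}_{(p-1)} = 0$ and that all primes of interest are closed points. Then every object $a \in \mathcal{L}_{(p)}$ has finite discrete support $\{\mathcal{P}_1, \dots, \mathcal{P}_n\}$, which is disconnected once $n \geq 2$; splitting $a$ along this clopen partition of its support (possible in a rigid, idempotent complete tensor triangulated category) gives $a \cong a_1 \oplus \cdots \oplus a_n$ with $supp(a_i) = \{\mathcal{P}_i\}$. This decomposes both sides of the asserted equivalence as external direct sums indexed by $Spc(\mathcal{L})_{(p)}$, so it suffices to prove the statement at a single closed point $\mathcal{P}$: namely that $q_{\mathcal{P}}$ induces an equivalence between the idempotent completion of the full subcategory of objects of $\mathcal{L}$ supported on $\{\mathcal{P}\}$ and $Min(\mathcal{L}_{\mathcal{P}})$.

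For this local assertion, essential surjectivity amounts to the fact that $q_{\mathcal{P}} \colon \mathcal{L} \to (\mathcal{L}/\mathcal{P})^{\#}$ is surjective up to direct summands, refined so that the preimage of an object supported at $\{\mathcal{P}\}$ can be chosen with support inside $\overline{\{\mathcal{P}\}}$ (hence inside $\mathcal{L}_{(p)}$); this refinement is the Thomason-type surjectivity of localizations, which for noetherian $Spc(\mathcal{L})$ can be extracted from the classification of thick $\otimes$-ideals by their supports. Full faithfulness is the delicate point: a morphism of $\mathcal{L}_{(p)}/\mathcal{L}_{(p-1)}$ killed by every $q_{\mathcal{P}}$ is represented by a fraction whose numerator, after localizing at each of the finitely many relevant primes, factors through an object of $\mathcal{P}$, and one must glue these finitely many factorizations into a single factorization through an object of $\mathcal{L}_{(p-1)}$ (equivalently, of support disjoint from the $p$-dimensional locus). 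Note that conservativity of $\bigoplus_{\mathcal{P}} q_{\mathcal{P}}$ is immediate — an object of $\mathcal{L}_{(p)}$ annihilated by all $q_{\mathcal{P}}$ has support avoiding $Spc(\mathcal{L})_{(p)}$ and hence lies in $\mathcal{L}_{(p-1)}$ — but upgrading this to the corresponding statement on $\mathrm{Hom}$-groups is where the real work lies.

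The main obstacle is precisely this local-to-global step for morphisms: that the presheaf $\mathcal{Q} \mapsto \mathrm{Hom}_{\mathcal{L}_{\mathcal{Q}}}(q_{\mathcal{Q}}x, q_{\mathcal{Q}}y)$ recovers $\mathrm{Hom}_{\mathcal{L}_{(p)}/\mathcal{L}_{(p-1)}}(x,y)$ as the product of its values at the $p$-dimensional primes. This is where rigidity and the noetherian hypothesis on $Spc(\mathcal{L})$ are genuinely used, and where I would follow Balmer's argument in [3] — tracking the supports of the cones that occur in the fraction calculus of the Verdier quotients, or, equivalently, passing through the classification of thick $\otimes$-ideals. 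The two axioms of a dimension function enter only in the bookkeeping that isolates the $p$-dimensional primes and guarantees that supports become finite and discrete after passing to the quotient.
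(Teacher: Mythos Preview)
The paper does not prove this theorem: it is quoted verbatim from Balmer's \emph{Supports and filtrations} paper (reference~[3]) and stated without argument, as background for the definition of tensor-triangular Chow groups. So there is no in-paper proof to compare your proposal against.

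Judged on its own, your outline is the right shape and matches Balmer's strategy in~[3]: construct the comparison functor from the family of localizations, reduce to the case where the $p$-dimensional primes are closed points by passing to $\mathcal{L}/\mathcal{L}_{(p-1)}$, use rigidity plus idempotent completeness to split objects along clopen partitions of their (finite, discrete) support, and then treat one closed point at a time. Your handling of the support bookkeeping and of the dimension-function axioms is correct, and you correctly identify full faithfulness of the induced functor as the crux.

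The gap is that you do not actually carry out that crux. Saying you ``would follow Balmer's argument in~[3]'' is an honest admission, but it is also the entire content of the theorem: the construction of $\Phi$ and its conservativity are soft, whereas showing that the Hom-presheaf on the $p$-dimensional locus is already a product (equivalently, that a fraction in the Verdier quotient killed at every relevant prime can be globally trivialized through an object of $\mathcal{L}_{(p-1)}$) is where rigidity, noetherianity of $Spc(\mathcal{L})$, and the classification of thick $\otimes$-ideals are genuinely combined. A complete proof must exhibit that gluing argument; your sketch stops just before it. One further technical point to watch in your reduction step: $\bar{\mathcal{L}} = \mathcal{L}/\mathcal{L}_{(p-1)}$ need not be idempotent complete, so the clopen-splitting argument should be run in $\bar{\mathcal{L}}^{\#}$, and you should check that passing to the idempotent completion does not disturb the identification of spectra and of the subcategories $Min(-)$ that you use.
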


With the above preparation, we are ready to recall Balmer's K-theoretic Chow group.

\begin{definition}[5]
Let $p \in \mathbb{Z}$, one define K-theoretic {\it p-cycles} associated to the tensor triangulated category $\mathcal{L}$ to be 
\[
 Z_{p}(\mathcal{L}) := K_{0}(\mathcal{L}_{(p)}/\mathcal{L}_{(p-1)})^{\#})= \bigoplus_{\mathcal{P} \in Spc(\mathcal{L})_{(p)}}K_{0}(Min(\mathcal{L}_{\mathcal{P}})),
\] 
where $K_{0}$ is the Grothendieck K-group(the quotient of the monoid of isomorphism class $[a]$ of objects under $\oplus$, by the submonoid of those $[a] + [\sum b] + [c]$ for which there exists a distinguish triangle $ a \to b \to c \to \sum a$). 
\end{definition}

According to [5], a K-theoretic {\it p-cycles} can be written as $\sum_{\mathcal{P}}\lambda_{\mathcal{P}}\cdot \overline{\{\mathcal{P}\}}$, for $\lambda_{\mathcal{P}} \in K_{0}(Min(\mathcal{L}_{\mathcal{P}}))$. Balmer's  new insight is to allow coefficients $\lambda_{\mathcal{P}}$ to live in the Grothendieck groups of $Min(\mathcal{L}_{\mathcal{P}})$.

\begin{definition}[5]
Let $p \in \mathbb{Z}$, we use  Ker$(i)$ denote the Kernel of $K_{0}(\mathcal{L}_{(p)}) \xrightarrow{i} K_{0}(\mathcal{L}_{(p+1)})$:

\[
  Ker(i) \to K_{0}(\mathcal{L}_{(p)}) \xrightarrow{i} K_{0}(\mathcal{L}_{(p+1)}).
\]

 The K-theoretic {\it p-boundaries} $B_{p}(\mathcal{L})$ is defined as the image of $Ker(i)$ in $Z_{p}(\mathcal{L})$
 \[
   B_{p}(\mathcal{L}):= j \circ Ker(i),
 \]
where $j: K_{0}(\mathcal{L}_{(p)}) \to K_{0}(\mathcal{L}_{(p)}/\mathcal{L}_{(p-1)})^{\#})(=Z_{p}(\mathcal{L}))$.

The K-theoretic {\it Chow group of p-cycles in $\mathcal{L}$}, denoted $CH_{p}(\mathcal{L})$, is defined 
to be the quotient of {\it p-cycles} by {\it p-boundaries}:

\[
  CH_{p}(\mathcal{L}) = \dfrac{Z_{p}(\mathcal{L})}{B_{p}(\mathcal{L})}.
\]

\end{definition}

\section{K-theoretic Chow groups of derived categories of schemes}
\label{K-theoretic Chow groups of derived categories of schemes}
In this section, we propose and study K-theoretic Chow groups of derived categories of noetherian schemes. In section 3.1, we propose K-theoretic Chow groups by slightly modifying Balmer's. We show that our K-theoretic Chow groups recover the classical ones for regular schemes in section 3.2. Functoriality is discussed in section 3.3. Comparing with the relative Chow groups, we suggest a question closely related with Gersten conjecture in 3.4.

\subsection{Definition}
\label{Definition}
Let $X$ be a noetherian scheme of finite Krull dimension $d$. As explained in Example 2.2, the derived category $\mathcal{L}=D^{perf}(X)$ is a tensor triangulated category.  With chosen {\it dimension function} on $D^{perf}(X)$, one can filter this category
$\dots \subset \mathcal{L}_{(p)} \subset \mathcal{L}_{(p+1)} \subset \dots \subset \mathcal{L}$  by dimension of support
\[
  \mathcal{L}_{(p)} := \{a \in \mathcal{L} \mid dim(supp(a)) \leq p \}.
\]

The Verdier quotient $\mathcal{L}_{(p)}/\mathcal{L}_{(p-1)}$ doesn't have good description when $X$ is singular. However, theorem 2.12 guides us to look at the idempotent completion  $(\mathcal{L}_{(p)}/\mathcal{L}_{(p-1)})^{\#}$. 

To fix some notations, for every $i \in \mathbb{Z}$, we define $X_{(i)}=\{x \in X \mid dim \overline{\{ x \}} = i \}$. We further assume the {\it dimension function} satisfy: $-d \leq dim(-) \leq d$.
\begin{theorem}
 [3]

For each $p \in \mathbb{Z}$, localization induces an equivalence
\[
 (\mathcal{L}_{(p)}/\mathcal{L}_{(p-1)})^{\#}  \simeq \bigsqcup_{x \in X_{(p)}}D_{{x}}^{perf}(X)
\]
between the idempotent completion of the quotient $\mathcal{L}_{(p)}/\mathcal{L}_{(p-1)}$ and the coproduct over $x \in X_{(p)}$ of the derived category of the $ O_{X,x}$-modules with homology supported on the closed point $x \in spec(O_{X,x})$.
\end{theorem}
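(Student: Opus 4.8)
The plan is to deduce this statement from Balmer's general Theorem~2.12, which gives an equivalence
\[
 (\mathcal{L}_{(p)}/\mathcal{L}_{(p-1)})^{\#}  \simeq \bigsqcup_{\mathcal{P} \in Spc(\mathcal{L})_{(p)}} Min(\mathcal{L}_{\mathcal{P}}),
\]
by unwinding the right-hand side through Balmer's reconstruction theorem (Theorem~1.2). First I would invoke $Spc(D^{perf}(X)) \simeq X$ to identify $Spc(\mathcal{L})$ with the underlying space of $X$, under which a prime $\mathcal{P}$ corresponds to a point $x$, with $\overline{\{\mathcal{P}\}}$ corresponding to $\overline{\{x\}}$. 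With the \emph{dimension function} chosen to be $\dim(\mathcal{P}) = \dim\overline{\{\mathcal{P}\}}$ (the Krull dimension), the index set $Spc(\mathcal{L})_{(p)}$ becomes exactly $X_{(p)} = \{x \in X \mid \dim\overline{\{x\}} = p\}$, matching the coproduct index in the claim. The hypothesis $-d \le \dim(-) \le d$ together with finite Krull dimension guarantees $Spc(\mathcal{L})$ is noetherian so that Theorem~2.12 applies.

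Next I would identify each local factor. By construction $\mathcal{L}_{\mathcal{P}} = (\mathcal{L}/\mathcal{P})^{\#}$, and Balmer's localization theory identifies this with $D^{perf}(O_{X,x})$, the perfect derived category of the local ring at $x$ — concretely, $\mathcal{L}/\mathcal{P}$ is the Verdier quotient corresponding to restricting perfect complexes to $\mathrm{Spec}(O_{X,x})$, and idempotent completion makes it $D^{perf}(O_{X,x}) \cong K^b(O_{X,x}\text{-proj})$ up to summands. Then I must check that $Min(\mathcal{L}_{\mathcal{P}})$ — the objects whose support in $Spc(\mathcal{L}_{\mathcal{P}})$ is contained in the closed point — corresponds precisely to $D^{perf}_x(X)$, the perfect complexes of $O_{X,x}$-modules with homology supported at the maximal ideal. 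This is a support computation: $Spc(D^{perf}(O_{X,x})) \simeq \mathrm{Spec}(O_{X,x})$ by reconstruction, its unique closed point is the maximal ideal $\mathfrak{m}_x$, and a perfect complex has support contained in $\{\mathfrak{m}_x\}$ exactly when all its homology sheaves are $\mathfrak{m}_x$-torsion, i.e. supported at the closed point.

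Assembling these identifications termwise over the coproduct yields the displayed equivalence. I would be careful about two points of bookkeeping: first, that the equivalence of Theorem~2.12 is compatible with the reindexing of the coproduct, which it is since both sides are built functorially from the filtration and the support function; second, that the dimension shift conventions ($p$ versus $-p$, dimension versus codimension) are consistent with the choice announced just before the statement, namely the Krull-dimension normalization with $X_{(p)}$ as defined. The main obstacle — really the only nontrivial input — is the identification $Min(\mathcal{L}_{\mathcal{P}}) \simeq D^{perf}_x(X)$, i.e. matching Balmer's abstract notion of objects with minimal support against the concrete notion of a perfect complex with homology supported at a point; this rests on the reconstruction theorem applied to the local ring $O_{X,x}$ and on the compatibility of Balmer support with the classical support of complexes, both of which are available from the cited work of Balmer and Thomason.
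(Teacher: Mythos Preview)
The paper does not give its own proof of this statement: it is quoted verbatim from Balmer's work (reference~[3]), just as Theorem~2.12 is. So there is no ``paper's proof'' to compare against; the result is simply imported from the literature.

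That said, your proposal is the correct way to see Theorem~3.1 as the specialization of Theorem~2.12 to $\mathcal{L}=D^{perf}(X)$: one invokes the reconstruction theorem $Spc(D^{perf}(X))\simeq X$ to replace primes $\mathcal{P}$ by points $x$, identifies the local category $\mathcal{L}_{\mathcal{P}}$ with $D^{perf}(O_{X,x})$ (this is Thomason--Balmer localization), and then reads off $Min(\mathcal{L}_{\mathcal{P}})$ as the perfect complexes on $\mathrm{Spec}(O_{X,x})$ supported at the closed point, which is exactly $D_x^{perf}(X)$. Your bookkeeping concerns (compatibility of the coproduct reindexing, and of Balmer support with cohomological support) are legitimate but are precisely what is established in~[3] and in Thomason's work~[32], so citing those suffices. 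In short: your argument is correct, and it is essentially the content of the cited reference rather than an alternative to anything in the present paper.
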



The short sequence
\[
 \mathcal{L}_{(p-1)} \to \mathcal{L}_{(p)} \to (\mathcal{L}_{(p)}/\mathcal{L}_{(p-1)})^{\#},
\]
which is exact up to summand, induces a long exact sequence:
{\small
\[
 \dots \to K_{n}(\mathcal{L}_{(p-1)}) \xrightarrow{i}  K_{n}(\mathcal{L}_{(p)}) \xrightarrow{j} K_{n}((\mathcal{L}_{p}/\mathcal{L}_{p-1})^{\#}) \xrightarrow{k}  K_{n-1}(\mathcal{L}_{(p-1)}) \to \dots
\]
 }
 
We can apply Balmer's definition 2.13 and 2.14  to this setting.
\begin{definition}[5]
Let $p \in \mathbb{Z}$, one defines K-theoretic {\it p-cycles} associated to the tensor triangulated category $\mathcal{L} = D^{perf}(X)$ to be(with respect to chosen {\it dimension function})
\[
 Z_{p}(\mathcal{L}) := K_{0}(\mathcal{L}_{(p)}/\mathcal{L}_{(p-1)})^{\#})= \bigoplus_{x \in X_{(p)}}K_{0}(O_{X,x} \ on \ x).
\] 
\end{definition}

\begin{definition}[5]
Let $p \in \mathbb{Z}$, we use  $Ker(i)$ denote the Kernel of $K_{0}(\mathcal{L}_{(p)}) \xrightarrow{i} K_{0}(\mathcal{L}_{(p+1)})$. The K-theoretic {\it p-boundaries} $B_{p}(\mathcal{L})$ is defined as the image of $Ker(i)$ in $Z_{p}(\mathcal{L})$
 \[
   B_{p}(\mathcal{L}):= j \circ Ker(i),
 \]
where $j: K_{0}(\mathcal{L}_{(p)}) \to K_{0}(\mathcal{L}_{(p)}/\mathcal{L}_{(p-1)})^{\#})(=Z_{p}(\mathcal{L}))$.

The K-theoretic {\it Chow group of p-cycles in $\mathcal{L}$}, denoted $CH_{p}(\mathcal{L})$, is defined 
to be the quotient of {\it p-cycles} by {\it p-boundaries}

\[
  CH_{p}(\mathcal{L}) = \dfrac{Z_{p}(\mathcal{L})}{B_{p}(\mathcal{L})}.
\]

\end{definition}

As pointed in [4], the long exact sequence
{\small
\[
 \dots \to K_{n}(\mathcal{L}_{(p-1)}) \xrightarrow{i}  K_{n}(\mathcal{L}_{(p)}) \xrightarrow{j} K_{n}((\mathcal{L}_{p}/\mathcal{L}_{p-1})^{\#}) \xrightarrow{k}  K_{n-1}(\mathcal{L}_{(p-1)}) \to \dots
\]
 }
produces an exact couple as usually and then gives rise to the associated coniveau spectral sequence with $E_{1}$-term:
\[
 E_{1}^{p,q} = K_{-p-q}((\mathcal{L}_{(-p)}/\mathcal{L}_{(-p-1)})^{\#}).
\]
The differential $d$ is the composition $d = j \circ k$ as usual
{\footnotesize
\[
d_{1}^{p,q}: K_{-p-q}((\mathcal{L}_{(-p)}/\mathcal{L}_{(-p-1)})^{\#}) \xrightarrow{k} K_{-p-q-1}(\mathcal{L}_{(-p-1)}) \xrightarrow{j}  K_{-p-q-1}((\mathcal{L}_{(-p-1)}/\mathcal{L}_{(-p-2)})^{\#}).
\]
}

\begin{definition}
[4]

 For each integer $q$ satisfying $-d \leq q \leq d$ , the $q^{th}$ Gersten complex $G_{q}$ is defined to be the $-q^{th}$ line of $E_{1}$ page of the above coniveau spectral sequence
 {\footnotesize
\begin{align*} 
 G_{q}: & \  0 \to \bigoplus_{x \in X_{(d)}}K_{d+q}(O_{X} \ on \ x)  \to \bigoplus_{x \in X_{(d-1)}}K_{d+q-1}(O_{X} \ on \ x) \to \dots  \\
& \to \dots \to   \bigoplus_{x \in X_{(-(q-1))}}K_{1}(O_{X,x} \ on \ x) 
  \xrightarrow{d_{1}^{q-1,-q}} \bigoplus_{x \in X_{(-q)}}K_{0}(O_{X,x} \ on \ x) \\
  &  \xrightarrow{d_{1}^{q,-q}} \dots \to \bigoplus_{x \in X_{(-d)}}K_{q-d}(O_{X,x} \ on \ x) \to 0.
\end{align*}
}

\end{definition}

\begin{theorem}
Balmer's K-theoretic {\it (-q)-boundaries} $B_{-q}(\mathcal{L})$ for $\mathcal{L}=D^{perf}(X)$ agrees with  the image of the differential $d_{1}^{q-1,-q}$
\[
 B_{-q}(\mathcal{L}) = Im(d_{1}^{q-1,-q}).
\]
\end{theorem}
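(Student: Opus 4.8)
The plan is to unwind both sides of the claimed equality against the localization long exact sequence that the paper has already attached to the filtration of $\mathcal{L}=D^{perf}(X)$ by dimension of support, and to observe that $B_{-q}(\mathcal{L})$ and $Im(d_{1}^{q-1,-q})$ are then seen to be literally the same subgroup of $Z_{-q}(\mathcal{L})$. No scheme-theoretic input beyond the filtration is needed; the argument is purely a diagram chase in the exact couple.

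First I would fix the indices. By Definition 3.4 the differential $d_{1}^{q-1,-q}$ is the composite
\[
K_{1}\big((\mathcal{L}_{(-q+1)}/\mathcal{L}_{(-q)})^{\#}\big)\ \xrightarrow{\ k\ }\ K_{0}(\mathcal{L}_{(-q)})\ \xrightarrow{\ j\ }\ K_{0}\big((\mathcal{L}_{(-q)}/\mathcal{L}_{(-q-1)})^{\#}\big)=Z_{-q}(\mathcal{L}),
\]
where $k$ is the connecting homomorphism of the exact couple and $j$ is the localization map; this $j$ is exactly the map $j$ appearing in Definitions 3.3--3.4 for the value $p=-q$. Consequently $Im(d_{1}^{q-1,-q})=j\big(Im(k)\big)$.

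Second, I would invoke the long exact sequence (recalled just before Definition 3.4) associated to the sequence $\mathcal{L}_{(-q)}\to\mathcal{L}_{(-q+1)}\to(\mathcal{L}_{(-q+1)}/\mathcal{L}_{(-q)})^{\#}$, which is exact up to summands. Its relevant segment reads
\[
K_{1}\big((\mathcal{L}_{(-q+1)}/\mathcal{L}_{(-q)})^{\#}\big)\ \xrightarrow{\ k\ }\ K_{0}(\mathcal{L}_{(-q)})\ \xrightarrow{\ i\ }\ K_{0}(\mathcal{L}_{(-q+1)}),
\]
in which $k$ is the same connecting map as above and $i$ is induced by the inclusion $\mathcal{L}_{(-q)}\hookrightarrow\mathcal{L}_{(-q+1)}$, i.e. precisely the map $i$ used in Definition 3.3 to define the $(-q)$-boundaries (note $\mathcal{L}_{(p)}$ is idempotent complete by Hypothesis 2.7, so $K_{0}(\mathcal{L}_{(-q+1)})$ is unambiguous). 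Exactness at $K_{0}(\mathcal{L}_{(-q)})$ then yields $Im(k)=Ker(i)$. Combining the two steps, $Im(d_{1}^{q-1,-q})=j(Im(k))=j(Ker(i))=B_{-q}(\mathcal{L})$, which is the assertion.

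The only point that requires genuine care — and where I expect the bulk of the (small) work to sit — is the identification of the two occurrences of $k$: one must check that the connecting map appearing in the coniveau exact couple, hence in $d_{1}=j\circ k$ of Definition 3.4, coincides, up to the sign conventions there, with the connecting map of the ordinary localization long exact sequence of the triple $(\mathcal{L}_{(-q)},\mathcal{L}_{(-q+1)},(\mathcal{L}_{(-q+1)}/\mathcal{L}_{(-q)})^{\#})$, and that the passage to idempotent completions does not shift the $K_{0}$-terms in a way that breaks exactness. Both facts are already built into the construction of the spectral sequence via [4]: the cofinality theorem guarantees that the idempotent-completed quotient still fits into a genuine long exact sequence with the $K_{0}$-terms as written. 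Granting this, the theorem reduces to the one-line chase above.
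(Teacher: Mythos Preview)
Your proposal is correct and follows essentially the same argument as the paper: both use exactness of the localization sequence at $K_{0}(\mathcal{L}_{(-q)})$ to identify $Ker(i)=Im(k)$, and then conclude $B_{-q}(\mathcal{L})=j(Ker(i))=j(Im(k))=Im(d_{1}^{q-1,-q})$ since $d_{1}^{q-1,-q}=j\circ k$. Your additional remarks about matching the two occurrences of $k$ and the role of idempotent completion are reasonable caveats, but the paper simply takes these identifications as built into the exact-couple construction.
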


\begin{proof}
The long exact sequence
\[
 \dots \to K_{1}((\mathcal{L}_{(-q+1)}/\mathcal{L}_{(-q)})^{\#}) \xrightarrow{k}  K_{0}(\mathcal{L}_{(-q)}) \xrightarrow{i}  K_{0}(\mathcal{L}_{(-q+1)}) \to \dots
\]
shows that $Ker(i) = Im(k)$. So $B_{-q}(\mathcal{L})= j \circ Ker(i) = j \circ Im(k)$,
where $j: K_{0}(\mathcal{L}_{(-q)}) \to K_{0}((\mathcal{L}_{(-q)}/\mathcal{L}_{(-q-1)})^{\#})$.

The conclusion follows since the differential $d_{1}^{q-1,-q}$ is the composition $d = j \circ k$ 
\[
d_{1}^{q-1,-q}: K_{1}((\mathcal{L}_{(-q+1)}/\mathcal{L}_{(-q)})^{\#}) \xrightarrow{k} K_{0}(\mathcal{L}_{(-q)}) \xrightarrow{j}  K_{0}((\mathcal{L}_{(-q)}/\mathcal{L}_{(-q-1)})^{\#}).
\]
\end{proof}

With the above preparation, we are ready to propose our K-theoretic definitions of Chow groups.

\begin{definition}
With chosen {\it dimension function} on $D^{perf}(X)$, the K-theoretic {\it q-cycles} and K-theoretic {\it rational equivalence} of $(X,O_{X})$, denoted $Z_{q}(D^{perf}(X))$ and $Z_{q,rat}(D^{perf}(X))$ respectively, are defined to be 
\[
  Z_{q}(D^{perf}(X))= Ker(d_{1}^{q,-q})
\]
\[
  Z_{q,rat}(D^{perf}(X))=Im(d_{1}^{q-1,-q}).
\]

The $q^{th}$ K-theoretic Chow group  of $(X,O_{X})$, denoted by $CH_{q}(D^{perf}(X))$, is defined to be 

\[
  CH_{q}(D^{perf}(X))= \dfrac{Z_{q}(D^{perf}(X))}{Z_{q,rat}(D^{perf}(X))}.
\]
\end{definition}

It is clear that our K-theoretic Chow groups are cohomology groups of Gersten complexes. It is also clear that our K-theoretic Chow groups are subgroups of Balmer's.
\begin{corollary}
Let $\mathcal{L} = D^{perf}(X)$, we have the following :
\[
  Z_{q,rat}(D^{perf}(X)) = B_{-q}(\mathcal{L})
\]
\[
  Z_{q}(D^{perf}(X)) \subseteq Z_{-q}(\mathcal{L})
\]
\[
  CH_{q}(D^{perf}(X)) \subseteq CH_{-q}(\mathcal{L})
\]
\end{corollary}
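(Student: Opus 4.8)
The plan is to unwind the three claims directly from the definitions, since Corollary 3.7 is essentially a bookkeeping consequence of Definition 3.6, Definition 2.14, and Theorem 3.5. Throughout I write $\mathcal{L} = D^{perf}(X)$ and I fix the chosen \emph{dimension function} together with the normalization $-d \le \dim(-) \le d$, so that all indices are bounded and the Gersten complex $G_{-q}$ of Definition 3.4 is the line in question.

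First I would dispose of the identity $Z_{q,rat}(D^{perf}(X)) = B_{-q}(\mathcal{L})$. By Definition 3.6 we have $Z_{q,rat}(D^{perf}(X)) = \mathrm{Im}(d_1^{q-1,-q})$, and by Theorem 3.5 we have $B_{-q}(\mathcal{L}) = \mathrm{Im}(d_1^{q-1,-q})$; these are literally the same subgroup of $Z_{-q}(\mathcal{L}) = K_0((\mathcal{L}_{(-q)}/\mathcal{L}_{(-q-1)})^{\#})$, so there is nothing to prove beyond citing Theorem 3.5. Next, for the middle inclusion, recall that Balmer's cycle group in this setting is $Z_{-q}(\mathcal{L}) = K_0((\mathcal{L}_{(-q)}/\mathcal{L}_{(-q-1)})^{\#})$ with no relations imposed, whereas $Z_q(D^{perf}(X)) = \mathrm{Ker}(d_1^{q,-q})$ is by construction a subgroup of that same group $K_0((\mathcal{L}_{(-q)}/\mathcal{L}_{(-q-1)})^{\#})$ (it is the kernel of the outgoing Gersten differential). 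Hence $Z_q(D^{perf}(X)) \subseteq Z_{-q}(\mathcal{L})$ is immediate once one checks that the coniveau indexing in Definition 3.3 identifies $E_1^{-q,?}$ with $K_0$ of the relevant subquotient; I would spell out that $E_1^{p,q'} = K_{-p-q'}((\mathcal{L}_{(-p)}/\mathcal{L}_{(-p-1)})^{\#})$ evaluated at $p = q$ on the $-q$-th line gives exactly $K_0((\mathcal{L}_{(-q)}/\mathcal{L}_{(-q-1)})^{\#})$.

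For the final inclusion $CH_q(D^{perf}(X)) \subseteq CH_{-q}(\mathcal{L})$, I would combine the first two: we have a subgroup inclusion $Z_q(D^{perf}(X)) \hookrightarrow Z_{-q}(\mathcal{L})$ which carries the subgroup $Z_{q,rat}(D^{perf}(X))$ onto $B_{-q}(\mathcal{L})$, and moreover $B_{-q}(\mathcal{L}) \subseteq Z_q(D^{perf}(X))$ because a boundary lies in the kernel of the next differential (this uses $d_1 \circ d_1 = 0$ for the coniveau spectral sequence, i.e. $\mathrm{Im}(d_1^{q-1,-q}) \subseteq \mathrm{Ker}(d_1^{q,-q})$, which is exactly why $G_{-q}$ is a complex). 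Therefore the inclusion of cycle groups descends to an injection on quotients
\[
  CH_q(D^{perf}(X)) = \frac{Z_q(D^{perf}(X))}{Z_{q,rat}(D^{perf}(X))} \hookrightarrow \frac{Z_{-q}(\mathcal{L})}{B_{-q}(\mathcal{L})} = CH_{-q}(\mathcal{L}),
\]
and injectivity is automatic precisely because $Z_{q,rat}(D^{perf}(X)) = Z_q(D^{perf}(X)) \cap B_{-q}(\mathcal{L})$ (indeed they are equal as sets). The only mildly delicate point — and the one I would write out most carefully — is verifying that $B_{-q}(\mathcal{L})$ really does sit inside $\mathrm{Ker}(d_1^{q,-q})$ rather than merely inside $Z_{-q}(\mathcal{L})$; this is where one must invoke that Balmer's boundaries, being the image of $\mathrm{Ker}(i) = \mathrm{Im}(k)$ under $j$, automatically die under the subsequent differential $d = j\circ k$ by the long exact sequence, so $d_1^{q,-q} \circ d_1^{q-1,-q} = 0$. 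Apart from that, the proof is a short diagram chase with no obstacles; the main "work" is notational alignment between Balmer's $(p)$-indexing in Section 2 and the coniveau $p,q$-indexing of Definition 3.3.
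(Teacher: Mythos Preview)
Your proof is correct and matches the paper's approach: the paper gives no explicit proof of this corollary, treating it as immediate from Definition 3.6 and Theorem 3.5, which is exactly what you unwind. One minor notational slip: the relevant Gersten complex in the paper's indexing is $G_q$ (defined as the $-q$-th line of the $E_1$ page), not $G_{-q}$.
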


\subsection{Agreement}
\label{Agreement}
We show that our K-theoretic Chow groups recover the classical ones for regular schemes. 
\begin{theorem} 
 Let $X$ be a regular scheme of finite type over a field $k$ and let the tensor triangulated category $D^{perf}(X)$ be equipped with $-codim_{Krull}$ as a dimension function, our K-theoretic Chow group agrees with the classical one
 \[
  Z_{q}(D^{perf}(X))= Z^{q}(X)
\]

\[
 Z_{q,rat}(D^{perf}(X)) = Z^{q}_{rat}(X)
\]
 
\[
  CH_{q}(D^{perf}(X)) = CH^{q}(X).
\]
Moreover, we have also Bloch's formula:
 \[
   CH_{q}(D^{perf}(X)) = H^{q}(X, K_{q}(O_{X}))=CH^{q}(X).
 \]
\end{theorem}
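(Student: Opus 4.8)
The plan is to reduce the statement to Quillen's theorem. For a regular scheme $X$ of finite type over a field, equipped with the dimension function $\dim = -\mathrm{codim}_{\mathrm{Krull}}$, the key point is that each local category $D^{perf}_x(X)$ appearing in Theorem 3.2 is the derived category of finitely generated $O_{X,x}$-modules with homology supported on the closed point, and since $O_{X,x}$ is a regular local ring, dévissage (or rather: the agreement of $K$-theory of perfect complexes on $x$ with $K$-theory of the residue field) gives
\[
  K_n(O_{X,x}\ \mathrm{on}\ x) \cong K_n(k(x)),
\]
where $k(x)$ is the residue field. Substituting this into Definition 3.4 and into the Gersten complex $G_q$ of Definition 3.5, one sees that the $(-q)$-th row of the coniveau spectral sequence is exactly the Quillen--Gersten complex computing $H^q(X,\underline{K}_q(O_X))$; in particular $Z_q(D^{perf}(X)) = \ker(d_1^{q,-q})$ becomes the group of $q$-codimensional cycles $Z^q(X)$ (the $K_0$ terms are free on points of codimension $q$), and $B_{-q}(\mathcal{L}) = \mathrm{Im}(d_1^{q-1,-q})$ becomes the subgroup generated by divisors of rational functions on codimension-$(q-1)$ subvarieties, i.e. $Z^q_{\mathrm{rat}}(X)$. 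This is precisely the identification of terms I would carry out first.

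Second, I would invoke Quillen's computation: the coniveau spectral sequence for $K$-theory, localized at the residue fields via the above identification, degenerates at $E_2$ along the relevant line to give $E_2^{q,-q} = H^q(X,\underline{K}_q(O_X))$, and Quillen's proof of Bloch's formula identifies this with $CH^q(X)$. Since our $CH_q(D^{perf}(X))$ is by construction $\ker(d_1^{q,-q})/\mathrm{Im}(d_1^{q-1,-q})$, which is exactly the $E_2^{q,-q}$ entry of this spectral sequence, the chain of equalities
\[
  CH_q(D^{perf}(X)) = E_2^{q,-q} = H^q(X,\underline{K}_q(O_X)) = CH^q(X)
\]
follows. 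The three displayed equalities for $Z_q$, $Z_{q,\mathrm{rat}}$, $CH_q$ then drop out, and Bloch's formula is the middle identification.

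The main obstacle, and the step deserving the most care, is the identification $D^{perf}_x(X) \simeq D^b(k(x)\text{-}\mathrm{mod})$ at the level of $K$-theory in the needed range: one must check that the Verdier-quotient description of Theorem 3.2 matches, term by term with correct indexing, Quillen's filtration of the $G$-theory (or $K$-theory, by regularity) of $X$ by codimension of support, and that the differentials $d_1$ coincide with Quillen's boundary maps (tame symbols / transfer maps). This is essentially bookkeeping of localization sequences, but the shift conventions between Balmer's $\mathcal{L}_{(p)}$ filtration and Quillen's coniveau filtration, together with the role of idempotent completion and the absence of negative $K$-groups here (guaranteed by regularity of the $O_{X,x}$), must be verified carefully. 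Once this dictionary is in place, everything else is a citation of Quillen \cite{25} and Bloch's formula.
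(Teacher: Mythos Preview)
Your proposal is correct and follows essentially the same approach as the paper: use regularity and Quillen's d\'evissage to identify $K_n(O_{X,x}\ \mathrm{on}\ x)\cong K_n(k(x))$, observe that negative $K$-groups of fields vanish so that $\ker(d_1^{q,-q})=\bigoplus_{x\in X^{(q)}}K_0(k(x))=Z^q(X)$, cite Quillen for the identification of $\mathrm{Im}(d_1^{q-1,-q})$ with rational equivalence, and deduce Bloch's formula from the fact that the sheafified Gersten complex is a flasque resolution of $\underline{K}_q(O_X)$. The only difference is emphasis: the paper simply invokes d\'evissage and the flasque resolution directly, while you phrase things via the $E_2$-page and flag the bookkeeping between Balmer's filtration and Quillen's as the point requiring care; this is a reasonable caution but not a genuinely different argument.
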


\begin{proof}
When the tensor triangulated category $D^{perf}(X)$ is equipped with $-codim_{Krull}$ as a dimension function, $X_{(-i)}=\{ x \in X \mid -codim_{Krull} \overline{\{x\}}= -i\}=\{ x \in X \mid codim_{Krull} \overline{\{x\}}= i\}=X^{(i)}$. So the Gersten complex $G_{q}$ is 
{\footnotesize
\begin{align*}
 0 \to & K_{q}(X) \to \bigoplus_{x \in X^{(0)}}K_{q}(O_{X,x}) \to \dots \xrightarrow{d_{1}^{q-1,-q}} \bigoplus_{x \in X^{(q)}}K_{0}(O_{X,x} \ on \ x) \\
 & \xrightarrow{d_{1}^{q,-q}} \bigoplus_{x \in X^{(q+1)}}K_{-1}(O_{X,x} \ on \ x) \to
 \dots \to \bigoplus_{x \in X^{(d)}}K_{q-d}(O_{X,x} \ on \ x) \to 0.
\end{align*}
}

Since $X$ is a regular scheme of finite type over a field $k$, then Quillen's d\'evissage [25] says the above Gersten complex agrees with Quillen's classical one, whose sheafification is a flasque resolution:
{\footnotesize
\begin{align*}
 0 \to & K_{q}(X) \to K_{q}(k(X)) \to \bigoplus_{x \in X^{(1)}}K_{q-1}(k(x)) \to 
 \dots \xrightarrow{d_{1}^{q-1,-q}} \bigoplus_{x \in X^{(q)}}K_{0}(k(x)) \\
 & \xrightarrow{d_{1}^{q,-q}} \bigoplus_{x \in X^{(q+1)}}K_{-1}(k(x)) \to \dots \to \bigoplus_{x \in X^{(d)}}K_{q-d}(k(x)) \to 0.
\end{align*}
}

Since $k(x)$ is regular, $K_{-1}(k(x))=0$. So 
\[
  Z_{q}(D^{perf}(X))= Ker(d_{1}^{q,-q})= \bigoplus_{x \in X^{(q)}}K_{0}(k(x)) = Z^{q}(X).
\]

As explained in [25],  the image of $d_{1}^{q-1,-q}$ gives the rational equivalence. Hence, 
\[
 Z_{q,rat}(D^{perf}(X)) = Im(d_{1}^{q-1,-q}) = Z^{q}_{rat}(X).
\]

Therefore, we have the following identification
\[
  CH_{q}(D^{perf}(X)) = CH^{q}(X).
\]

Bloch's formula
 \[
   CH_{q}(D^{perf}(X)) = H^{q}(X, K_{q}(O_{X}))=CH^{q}(X)
 \]
 follows immediately from the fact that the sheafification of the Gersten complex is a flasque resolution.
\end{proof}

\begin{theorem} 
 Let $X$ be a $d$-dimensional regular scheme of finite type over a field $k$ and let the tensor triangulated category $D^{perf}(X)$ be equipped with $dim_{Krull}$ as a dimension function, then we have the following identifications 
 \[
  Z_{q}(D^{perf}(X))= Z^{d+q}(X)
\]

\[
 Z_{q,rat}(D^{perf}(X)) = Z^{d+q}_{rat}(X)
\]
 
\[
  CH_{q}(D^{perf}(X)) = CH^{d+q}(X).
\]
\end{theorem}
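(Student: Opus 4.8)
The plan is to mimic the proof of Theorem 3.8 almost verbatim, the only change being the bookkeeping induced by replacing the dimension function $-codim_{Krull}$ by $dim_{Krull}$. First I would record how the filtration and the index sets translate: when $D^{perf}(X)$ is equipped with $dim_{Krull}$, we have $X_{(i)} = \{x \in X \mid dim_{Krull}\overline{\{x\}} = i\}$, and since $X$ is $d$-dimensional these sit in degrees $-d \le i \le d$ with $X_{(i)} = X^{(d-i)}$ (a point of closure-dimension $i$ has codimension $d-i$ in a $d$-dimensional regular — hence, locally, catenary and equidimensional — scheme). Consequently the $q^{th}$ Gersten complex $G_q$ of Definition 3.3, which by construction is the $-q^{th}$ row of the $E_1$-page, becomes after this re-indexing exactly Quillen's coniveau complex but with the cohomological degree shifted by $d$: the term $\bigoplus_{x \in X_{(-q)}}K_0(O_{X,x}\text{ on }x)$ is now $\bigoplus_{x \in X^{(d+q)}}K_0(k(x))$, and similarly the incoming differential $d_1^{q-1,-q}$ and outgoing differential $d_1^{q,-q}$ land in the $(d+q-1)^{st}$ and $(d+q+1)^{st}$ codimension spots respectively.

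Next I would invoke Quillen's dévissage exactly as in the proof of Theorem 3.8: because $X$ is regular of finite type over $k$, Theorem 3.2 identifies $(\mathcal{L}_{(p)}/\mathcal{L}_{(p-1)})^{\#}$ with $\bigsqcup_{x \in X_{(p)}}D^{perf}_x(X)$, and dévissage gives $K_n(D^{perf}_x(X)) \cong K_n(k(x))$, so $G_q$ agrees with Quillen's Gersten complex for the sheaf $\underline{K}_{d+q}$ on $X$, up to the degree shift described above. Since each residue field $k(x)$ is regular, $K_{-1}(k(x)) = 0$, so $\mathrm{Ker}(d_1^{q,-q})$ is computed in the $K_0$-spot and equals $\bigoplus_{x \in X^{(d+q)}}K_0(k(x)) = Z^{d+q}(X)$; this gives the first identification. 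The second follows because, as in Quillen's proof of Bloch's formula, the image of $d_1^{q-1,-q}$ — the boundary map out of the $\bigoplus_{x \in X^{(d+q-1)}}K_1(k(x))$ term — is precisely the group of cycles rationally equivalent to zero in codimension $d+q$, so $Z_{q,rat}(D^{perf}(X)) = Z^{d+q}_{rat}(X)$. Taking the quotient yields $CH_q(D^{perf}(X)) = CH^{d+q}(X)$.

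The only genuinely substantive point — and the place I would be most careful — is the identification $X_{(i)} = X^{(d-i)}$, i.e. that $dim_{Krull}\overline{\{x\}} + codim_{Krull}\overline{\{x\}} = d$ for every point $x$. This is where finite type over a field and regularity (hence being a catenary, locally equidimensional scheme of pure dimension $d$) is essential; on a general noetherian scheme the two dimension functions are not related by a clean shift and the two complexes would not line up. I would state this as a lemma (or cite the standard fact, e.g. from EGA or Hartshorne, that a regular scheme of finite type over a field is catenary and all maximal chains through a given irreducible closed subset have the same length) and then the rest of the argument is formal. Everything else — the dévissage, the vanishing of negative $K$-theory of fields, the interpretation of the first differential as rational equivalence — is imported unchanged from the proof of Theorem 3.8.
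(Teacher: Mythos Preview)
Your proposal is correct and follows essentially the same route as the paper: write out the Gersten complex for the $dim_{Krull}$ filtration, convert $X_{(i)}$ to $X^{(d-i)}$ via the identity $\dim\overline{\{x\}}+\operatorname{codim}\overline{\{x\}}=d$, and then defer to the argument of Theorem~3.8 (d\'evissage, vanishing of $K_{-1}$ of fields, Quillen's identification of the boundary with rational equivalence). The paper's own proof is in fact terser than yours---it simply records the re-indexing and says ``the remaining proof is the same as the above one''---so your explicit justification of the catenarity/equidimensionality step is a welcome addition rather than a deviation.
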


\begin{proof}
When the tensor triangulated category $D^{perf}(X)$ is equipped with $dim_{Krull}$ as a dimension function, the augmented Gersten complex $G_{q}$ is
\begin{align*}
 & G_{q}:  \  0 \to  K_{d+q}(X) \to  \bigoplus_{x \in X_{(d)}}K_{d+q}(O_{X,x} \ on \ x) \to \dots \to 
  \bigoplus_{x \in X_{(-(q-1))}}K_{1}(O_{X,x} \ on \ x) \\
 & \xrightarrow{d_{1}^{q-1,-q}}  \bigoplus_{x \in X_{(-q)}}K_{0}(O_{X,x} \ on \ x) 
  \xrightarrow{d_{1}^{q,-q}} \bigoplus_{x \in X_{(-(q+1))}}K_{-1}(O_{X,x} \ on \ x) \to
 \dots 
\end{align*}

Since $X_{(-i)}=\{ x \in X \mid dim_{Krull} \overline{\{x\}}= -i\}=\{ x \in X \mid codim_{Krull} \overline{\{x\}}= d+i\}=X^{(d+i)}$, so $G_{q}$ is 
\begin{align*}
 & G_{q}:  \  0 \to  K_{d+q}(X) \to  \bigoplus_{x \in X^{(0)}}K_{d+q}(O_{X,x} \ on \ x) \to \dots \to 
  \bigoplus_{x \in X^{(d+(q-1))}}K_{1}(O_{X,x} \ on \ x)  \\
 & \xrightarrow{d_{1}^{q-1,-q}} \bigoplus_{x \in X^{(d+q)}}K_{0}(O_{X,x} \ on \ x) 
  \xrightarrow{d_{1}^{q,-q}} \bigoplus_{x \in X^{(d+(q+1))}}K_{-1}(O_{X,x} \ on \ x) \to
 \dots 
\end{align*}

The remaining proof is the same as the above one.

\end{proof}

\subsection{Functoriality}
\label{Functoriality}
In this subsection, we discuss functoriality of K-theoretic Chow groups, flat pull-back and proper push-forward.

\textbf{Flat pull-back}.  Let $X$ and $Y$ be noetherian schemes with an ample family of line bundles(([SGA 6] II 2.2.3, or [32] 2.1.1]) and $f : X \to Y$ a flat morphism. 
For a noetherian scheme with an ample family of line bundles $S$($S=X,Y$), $D^{perf}(S)$ is equivalent to the derived category obtained from strict perfect complexes of $O_{S}$-modules(see [32, lemma 3.8]). We use the later in the following and also assume $D^{perf}(S)$ equipped with the dimension
function $-codim_{Krull}$.

\begin{lemma}([SGA 6] 1.2, [32] 2.5.1])
Let $f: X \to Y$ be a map of schemes, $Lf^{\ast}$ sends (strict)perfect complexes
to (strict)perfect complexes
\[
  Lf^{\ast}: D^{perf}(Y) \to D^{perf}(X).
\]
\end{lemma}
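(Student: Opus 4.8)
The plan is to reduce to the affine case and to the level of individual modules, where the statement becomes the elementary fact that the pullback of a vector bundle (resp.\ of a finitely generated projective module) is again of the same type and, being flat, is acyclic for $f^{\ast}$, so that the derived pullback agrees with the naive termwise one.

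First I would treat strict perfect complexes. Recall that a strict perfect complex on a scheme $S$ is a bounded complex $P^{\bullet}$ whose terms are vector bundles (locally free $O_{S}$-modules of finite rank), and that by [32, lemma 3.8] every object of $D^{perf}(S)$ is represented by such a complex when $S$ has an ample family of line bundles. For such $P^{\bullet}$ each term $P^{i}$ is flat over $O_{Y}$, so the termwise ordinary pullback $f^{\ast}P^{\bullet}$ already computes the derived pullback $Lf^{\ast}P^{\bullet}$ (no flat replacement is needed, a bounded complex of flat modules being $f^{\ast}$-acyclic). Since $f^{\ast}$ carries vector bundles to vector bundles and preserves boundedness, $f^{\ast}P^{\bullet}$ is again strict perfect. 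To see that this construction descends to a functor on derived categories, I would check that $f^{\ast}$ sends a quasi-isomorphism $P^{\bullet}\to Q^{\bullet}$ between bounded complexes of flat modules to a quasi-isomorphism: the cone is a bounded acyclic complex of flat modules, hence stays acyclic after $f^{\ast}$ (e.g.\ by the hyper-Tor spectral sequence, which degenerates since all terms are flat). This gives the functor on the derived categories of strict perfect complexes, hence $Lf^{\ast}: D^{perf}(Y)\to D^{perf}(X)$ via the equivalence just recalled.

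Next I would record the statement for perfect complexes not necessarily of this strict form, using that perfectness is a local condition and that $Lf^{\ast}$ commutes with restriction to open subschemes. Given a perfect complex $E^{\bullet}$ on $Y$ and a point $x\in X$, choose affine opens $V=\operatorname{Spec} B\subseteq Y$ with $f(x)\in V$ and $U=\operatorname{Spec} A\subseteq f^{-1}(V)$ with $x\in U$; it then suffices to show $Lf^{\ast}(E^{\bullet}|_{V})$ is perfect on $U$. Over $V$ the complex $E^{\bullet}|_{V}$ is quasi-isomorphic to a bounded complex $P^{\bullet}$ of finitely generated projective $B$-modules, which is a bounded complex of flats, so $Lf^{\ast}(P^{\bullet})=P^{\bullet}\otimes_{B}A$ termwise, a bounded complex of finitely generated projective $A$-modules, i.e.\ perfect. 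Hence $Lf^{\ast}E^{\bullet}$ is perfect on $X$.

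I expect the only genuine subtlety to be the bookkeeping around the word ``derived'': one must make sure that $Lf^{\ast}$, defined on unbounded derived categories via K-flat resolutions, agrees with the naive termwise pullback on bounded complexes of flats and is compatible with restriction to opens. Both facts are standard (SGA6; Thomason--Trobaugh [32]), so once they are invoked the proof is just the two short reductions above, and there is no real obstacle beyond citing these foundational compatibilities correctly.
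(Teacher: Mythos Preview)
Your proposal is correct and follows essentially the same approach as the paper: the paper's proof is precisely your first observation, namely that for a strict perfect complex $E^{\bullet}$ each term $E^{i}$ is a vector bundle, hence flat over $O_{Y}$, so $f^{\ast}E^{\bullet}$ already computes $Lf^{\ast}E^{\bullet}$ and is visibly strict perfect on $X$. Your additional paragraphs (preservation of quasi-isomorphisms, and the local reduction for perfect complexes not presented in strict form) go beyond what the paper writes out, but they are compatible elaborations rather than a different route.
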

 
 \begin{proof}
 For $E^{\bullet}$ a
strict perfect complex on $Y$, $f^{\ast}E^{\bullet}$ is clearly a strict perfect complex on
$X$. This complex represents $Lf^{\ast}E^{\bullet}$ as the vector bundles $E^{i}$ are flat over
$O_{Y}$ and hence deployed for $Lf^{\ast}$.
 \end{proof}
 
Moreover, for $f : X \to Y$ a flat morphism, $Lf^{\ast}$ respects the filtration of dimension of support. This has been proved by Klein in [19, lemma 4.2.1] for regular schemes $X$ and $Y$. In fact , Klein's proof also works in our setting.

\begin{lemma}[19, lemma 4.2.1]

The functor $Lf^{\ast} :  D^{perf}(Y) \to D^{perf}(X)$ respect the filtration of dimension of support

\[
  Lf^{\ast} :  D^{perf}(Y)_{(p)} \to D^{perf}(X)_{(p)}.
\]
 \end{lemma}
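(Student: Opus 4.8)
The plan is to reduce the statement to the elementary fact that flat pullback can only increase the Krull codimension of supports, so that the dimension function $-codim_{Krull}$ evaluated along $Lf^{\ast}a$ never exceeds its value along $a$. First I would pin down the support of a derived pullback. Fix $a\in D^{perf}(Y)$ and a point $x\in X$ with image $y=f(x)$; the stalk of $Lf^{\ast}a$ at $x$ is $a_{y}\otimes^{L}_{O_{Y,y}}O_{X,x}$, and since $f$ is flat this derived tensor product is underived, so that $H^{i}\bigl((Lf^{\ast}a)_{x}\bigr)=H^{i}(a_{y})\otimes_{O_{Y,y}}O_{X,x}$ for all $i$. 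Because $O_{Y,y}\to O_{X,x}$ is a flat local homomorphism of local rings it is faithfully flat, whence $H^{i}(a_{y})\otimes_{O_{Y,y}}O_{X,x}=0$ if and only if $H^{i}(a_{y})=0$. Thus $(Lf^{\ast}a)_{x}$ is acyclic exactly when $a_{y}$ is, which gives $supp(Lf^{\ast}a)=f^{-1}(supp(a))$, a closed subset of $X$ lying over $supp(a)$.

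Next I would compare local dimensions. For any point $x$ of a noetherian scheme $X$ one has $codim_{Krull}\overline{\{x\}}=\dim O_{X,x}$, since chains of irreducible closed subsets containing $\overline{\{x\}}$ correspond to chains of primes in $O_{X,x}$. Given $x\in X$ with $y=f(x)$, the induced map $O_{Y,y}\to O_{X,x}$ is a flat local homomorphism of noetherian local rings, so the dimension formula for such homomorphisms yields
\[
\dim O_{X,x}=\dim O_{Y,y}+\dim\bigl(O_{X,x}/\mathfrak{m}_{y}O_{X,x}\bigr)\geq\dim O_{Y,y},
\]
the last term being the dimension of the local ring of $x$ in the fibre $f^{-1}(y)$. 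Consequently $codim_{Krull}\overline{\{x\}}\geq codim_{Krull}\overline{\{f(x)\}}$, i.e. the dimension function values satisfy $\dim(x)\leq\dim(f(x))$ for $\dim=-codim_{Krull}$ on $X$, resp. $Y$.

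It remains to assemble the two steps. If $a\in D^{perf}(Y)_{(p)}$, then every $y\in supp(a)$ satisfies $\dim(y)=-codim_{Krull}\overline{\{y\}}\leq p$. For any $x\in supp(Lf^{\ast}a)=f^{-1}(supp(a))$ we have $f(x)\in supp(a)$, and by the previous step $\dim(x)\leq\dim(f(x))\leq p$; taking the supremum over such $x$ gives $\dim\bigl(supp(Lf^{\ast}a)\bigr)\leq p$, that is $Lf^{\ast}a\in D^{perf}(X)_{(p)}$, as claimed. The one point that genuinely needs care is the support identification, where flatness of $f$ is essential (it makes the pullback on stalks underived and faithfully flat); everything else is the standard dimension theory of flat local homomorphisms. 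This is precisely Klein's argument in [19, lemma 4.2.1], and it requires no new input beyond the preceding lemma that $Lf^{\ast}$ preserves perfect complexes.
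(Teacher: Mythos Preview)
Your argument is correct and is precisely the standard proof: identify $supp(Lf^{\ast}a)=f^{-1}(supp(a))$ via faithful flatness on stalks, then use the dimension formula $\dim O_{X,x}=\dim O_{Y,y}+\dim(O_{X,x}/\mathfrak{m}_y O_{X,x})$ for flat local homomorphisms to conclude that codimension can only go up. The paper does not supply its own proof of this lemma but simply cites Klein [19, lemma 4.2.1], remarking that his argument carries over from the regular case to the present noetherian setting; what you have written is exactly that argument spelled out, so there is nothing to compare.
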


\begin{theorem}
Let $f : X \to Y$ be a flat morphism, then $Lf^{\ast}$ induces group homomorphisms

\[
  CH(Lf^{\ast}): CH_{p}(D^{perf}(Y)) \to CH_{p}(D^{perf}(X)).
\]
\end{theorem}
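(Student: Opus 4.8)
The plan is to upgrade $Lf^{\ast}$ to a morphism between the entire coniveau spectral sequences attached to $D^{perf}(Y)$ and $D^{perf}(X)$, and then to read off the map on Chow groups from the fact that these are cohomology groups of Gersten complexes (Definition 3.6). First I would invoke Lemma 3.11: since $Lf^{\ast}$ is exact and preserves the filtration by dimension of support, it restricts, for every $p$, to an exact functor $D^{perf}(Y)_{(p)}\to D^{perf}(X)_{(p)}$, and these restrictions are compatible with the inclusions $D^{perf}(-)_{(p-1)}\hookrightarrow D^{perf}(-)_{(p)}$. By the universal properties of the Verdier quotient and of idempotent completion, $Lf^{\ast}$ then induces exact functors
\[
\bigl(D^{perf}(Y)_{(p)}/D^{perf}(Y)_{(p-1)}\bigr)^{\#}\longrightarrow \bigl(D^{perf}(X)_{(p)}/D^{perf}(X)_{(p-1)}\bigr)^{\#}
\]
making the squares with the localization functors commute; under the identification of Theorem 3.1 these are, componentwise, the flat base-change functors on the local rings of $X$ lying above a given point of $Y$ of the appropriate dimension.

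Next I would apply algebraic $K$-theory. The family of filtration-preserving exact functors above is a morphism of the towers $\cdots\subset D^{perf}(-)_{(p-1)}\subset D^{perf}(-)_{(p)}\subset\cdots$ of triangulated categories, whose successive quotients are exact only up to direct summands; applying the Thomason--Neeman localization theorem at each step one obtains a morphism of the associated long exact $K$-theory sequences, hence of the exact couples, hence of the coniveau spectral sequences, compatible with all differentials. Restricting to the $(-p)$-th line of the $E_{1}$-page yields a chain map $Lf^{\ast}\colon G_{p}(Y)\to G_{p}(X)$ of Gersten complexes commuting with every differential $d_{1}^{\bullet,-p}$, each component being a homomorphism of $K_{0}$-groups.

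It then remains only to conclude formally. Since $Lf^{\ast}$ commutes with $d_{1}^{p,-p}$ it carries $Z_{p}(D^{perf}(Y))=Ker(d_{1}^{p,-p})$ into $Z_{p}(D^{perf}(X))$, and since it commutes with $d_{1}^{p-1,-p}$ it carries $Z_{p,rat}(D^{perf}(Y))=Im(d_{1}^{p-1,-p})$ into $Z_{p,rat}(D^{perf}(X))$; passing to quotients gives the desired group homomorphism $CH(Lf^{\ast})\colon CH_{p}(D^{perf}(Y))\to CH_{p}(D^{perf}(X))$. The step I expect to be the main obstacle is the naturality of the localization long exact sequences with respect to $Lf^{\ast}$, i.e. the commutativity of the connecting maps $k$ with $Lf^{\ast}$, so that one genuinely obtains a morphism of spectral sequences and not merely termwise maps; this is handled by the ``morphism of towers of spectra'' viewpoint, but one must be careful that the relevant quotients are exact only up to summands, so that the localization theorem is applied in the idempotent-complete setting. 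If desired, one also checks along the way that for $Y$ regular this construction recovers the classical flat pullback of cycles via Theorem 3.8.
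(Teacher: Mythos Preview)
Your proposal is correct and follows essentially the same route as the paper's proof: both use Lemma~3.11 to see that $Lf^{\ast}$ respects the filtration, pass to Verdier quotients and idempotent completions by their universal properties, obtain a morphism of coniveau spectral sequences, and read off the induced map on the Gersten complexes and hence on Chow groups. Your write-up is simply more explicit about the naturality of the localization sequences and the role of idempotent completion, points the paper leaves implicit.
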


\begin{proof}
$Lf^{\ast}$ respects the filtration of dimension of support
\[
  Lf^{\ast} :  D^{perf}(Y)_{(p)} \to D^{perf}(X)_{(p)}
\]
\[
  Lf^{\ast} :  D^{perf}(Y)_{(p-1)} \to D^{perf}(X)_{(p-1)}.
\]

According to universal property of Verdier quotient, we have 
\[
 Lf^{\ast} :  D^{perf}(Y)_{(p)}/D^{perf}(Y)_{(p-1)} \to D^{perf}(X)_{(p)}/D^{perf}(X)_{(p-1)}.
\]

Furthermore, according to [7], we have 
\[
 Lf^{\ast} :  (D^{perf}(Y)_{(p)}/D^{perf}(Y)_{(p-1)})^{\#} \to (D^{perf}(X)_{(p)}/D^{perf}(X)_{(p-1)})^{\#}.
\]
This induces maps between coniveau spectral sequences $E_{1}^{-p,-q}(X)$ and $E_{1}^{-p,-q}(Y)$
{\footnotesize
\[
 Lf^{\ast}: K_{p+q}((D^{perf}(Y)_{(p)}/D^{perf}(Y)_{(p-1)})^{\#}) \to  K_{p+q}((D^{perf}(X)_{(p)}/D^{perf}(X)_{(p-1)})^{\#}) . 
\]
}

Therefore, we have the following commutative diagram
{\footnotesize
\[
  \begin{CD}
   \dots @>>> \bigoplus_{y \in Y^{(p-1)}}K_{1}(O_{Y,y} \ on \ y) @>d_{1}^{p-1,-p}>> \bigoplus_{y \in Y^{(p)}}K_{0}(O_{Y,y} \ on \ y) @>d_{1}^{p,-p}>> \dots \\
   @VLf^{\ast}VV  @VLf^{\ast}VV  @VLf^{\ast}VV  \\
  \dots @>>> \bigoplus_{x \in X^{(p-1)}}K_{0}(O_{X,x} \ on \ x) @>d_{1}^{p-1,-p}>> \bigoplus_{x \in X^{(p)}}K_{0}(O_{X,x} \ on \ x) @>d_{1}^{p,-p}>> \dots \\
  \end{CD}
\]
}

Hence, $Lf^{\ast}$ induces group homomorphisms
\[
  CH(Lf^{\ast}): CH_{p}(D^{perf}(Y)) \to CH_{p}(D^{perf}(X)).
\]
\end{proof}

\textbf{II: proper push-forward}. Let $X$ and $Y$ still be noetherian schemes with an ample family of line bundles. We assume $D^{perf}(X)$ and $D^{perf}(Y)$ equipped with the dimension
function $dim_{Krull}$. 

Let's recall a theorem from SGA6 firstly. For the definition of { \it pseudo-coherent} and {\it perfect}, 
we refer to [SGA6] III or [32,section 2].
\begin{lemma}([SGA6] III 2.5, 4.8.1 or [32] theorem 2.5.4).
Let $f : X \to Y$ be a
proper map of schemes. Suppose either that $f$ is projective, or that $Y$ is
locally noetherian. Suppose that $f$ is a pseudo-coherent (respectively, a
perfect) map. Then if $E'$ is a pseudo-coherent (resp. perfect) complex on
$X$, $Rf_{\ast}(E')$ is pseudo-coherent (resp. perfect) on $Y$.
\end{lemma}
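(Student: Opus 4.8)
This is the classical coherence theorem for higher direct images, so the plan is to reduce it, by d\'evissage on $f$, to two model cases and then invoke standard facts about pushforward along projective space. First I would observe that both conclusions are local on the base: a complex on $Y$ is pseudo-coherent (resp. perfect) iff it is so on each member of an affine open cover, so one may assume $Y = \operatorname{Spec} R$ with $R$ noetherian in the ``$Y$ locally noetherian'' case, or keep $Y$ arbitrary when $f$ is projective. I would then factor a projective $f$ as a closed immersion $X \hookrightarrow \mathbb{P}^{n}_{Y}$ followed by the projection $q : \mathbb{P}^{n}_{Y} \to Y$, thereby reducing to (i) closed immersions and (ii) the morphism $q$.

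For a closed immersion $i : Z \hookrightarrow Y$ the functor $Ri_{\ast} = i_{\ast}$ is exact, and $i_{\ast}$ of a finite free $O_{Z}$-module is a finitely presented $O_{Y}$-module; applying this termwise to a bounded-above complex of finite frees representing a given pseudo-coherent complex, and using that $i_{\ast}$ has cohomological dimension $0$, handles the pseudo-coherent case. For $q$, the two inputs I would use are that $Rq_{\ast}$ has cohomological amplitude $\le n$ and that $Rq_{\ast} O(m)$ is finite free over $Y$ for $m$ in the relevant range (Serre vanishing together with cohomology and base change). Given a pseudo-coherent $E'$ on $\mathbb{P}^{n}_{Y}$, for each integer $N$ I would replace $E'$, up to a complex concentrated in degrees below $N$, by a bounded-above complex whose terms are finite sums of twists $O(-m)$ with $m \gg 0$ (this is exactly where pseudo-coherence of $E'$ is used), push forward termwise, and run the hypercohomology spectral sequence; the finite cohomological amplitude of $Rq_{\ast}$ keeps the truncation error below a controlled degree, yielding pseudo-coherence of $Rq_{\ast}E'$.

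For the perfect case I would combine the pseudo-coherence just obtained with a bound on Tor-amplitude. Over a noetherian $Y$, a pseudo-coherent complex $F$ is perfect iff its derived fibres $F \otimes^{L}_{O_{Y}} \kappa(y)$ have cohomology in a bounded range, uniformly in $y$. Flat base change along $\operatorname{Spec}\kappa(y) \to Y$ identifies $Rf_{\ast}(E') \otimes^{L}_{O_{Y}} \kappa(y)$ with $Rf_{y\ast}(E'|_{X_{y}})$ for the fibre $f_{y} : X_{y} \to \operatorname{Spec}\kappa(y)$; here $X_{y}$ is proper over a field and $E'|_{X_{y}}$ is perfect (pullback of a perfect complex is perfect, by the affine-local form of the first part already proved), so $Rf_{y\ast}$ of it has bounded coherent cohomology. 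The assertion that this bound is independent of $y$ is where the hypothesis that $f$ is a perfect morphism enters, together with the finite cohomological amplitude of $Rf_{\ast}$; this gives the Tor-amplitude bound and hence perfection of $Rf_{\ast}(E')$.

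The main obstacle I anticipate is the bookkeeping in the pseudo-coherent case when $E'$ is unbounded below: one never obtains a global finite resolution, so the argument must be organized as a sequence of approximations, each valid only up to a truncation whose degree is pushed to $-\infty$, with the finite cohomological amplitude of $Rf_{\ast}$ holding the errors in check; the perfect case then adds the base-change identity and the uniform fibrewise amplitude estimate. Since all of this is carried out in full in [SGA6] III 2.5, 4.8.1 and in [32, Theorem 2.5.4], in the end I would simply cite those sources rather than reproduce the argument here.
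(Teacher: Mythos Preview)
Your proposal is sound as a sketch of the classical argument, and your final instinct is exactly what the paper does: it states the lemma with attribution to [SGA6] III 2.5, 4.8.1 and to [32, Theorem 2.5.4] and gives no proof whatsoever. So the ``approach'' in the paper is simply to cite the sources, which is precisely where you end up; your outline of the factorization through $\mathbb{P}^{n}_{Y}$, the bounded cohomological amplitude of $Rq_{\ast}$, and the Tor-amplitude/base-change argument for perfection is extra detail the paper does not attempt to supply.
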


Examples of perfect maps are smooth maps, regular closed immersion, locally complete intersection ect. We refer the readers to [SGA6] VII for more discussions.
\begin{corollary}
Let $f: X \to Y$ be a proper morphism between noetherian schemes.
Suppose that $f$ is a perfect map. Then $Rf_{\ast}$ sends perfect complexes to perfect complexes
\[
  Rf_{\ast}: D^{perf}(Y) \to D^{perf}(X).
\]
\end{corollary}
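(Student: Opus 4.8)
The plan is to deduce the corollary immediately from the SGA6 lemma recalled just above (Lemma on $Rf_{\ast}$ of pseudo-coherent/perfect complexes). First I would check that the hypotheses of that lemma are met. The morphism $f\colon X\to Y$ is proper by assumption; and since $Y$ is noetherian it is in particular locally noetherian, so the second alternative in the lemma's hypothesis (``or that $Y$ is locally noetherian'') holds — in particular one need not assume $f$ projective. Moreover $f$ is assumed to be a perfect map, which is precisely what licenses the ``perfect'' conclusion of the lemma (as opposed to the weaker ``pseudo-coherent'' one).

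Next I would apply the lemma directly: for every perfect complex $E'$ on $X$, the total direct image $Rf_{\ast}(E')$ is a perfect complex on $Y$. The total derived functor $Rf_{\ast}$ exists because a proper morphism of noetherian schemes is quasi-compact and quasi-separated, and $Rf_{\ast}$ is an exact (triangulated) functor by general homological algebra. Consequently $Rf_{\ast}$ restricts to an exact functor between the thick $\otimes$-subcategories of perfect complexes,
\[
 Rf_{\ast}\colon D^{perf}(X)\to D^{perf}(Y),
\]
which is the assertion (with the direction of the arrow in the displayed formula of the statement corrected, since pushforward along $f\colon X\to Y$ produces complexes on $Y$).

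There is essentially no obstacle here: the corollary is a purely formal consequence of the quoted lemma. The only points demanding any care are bookkeeping ones — invoking the lemma under the ``$Y$ locally noetherian'' alternative rather than the ``$f$ projective'' one, and keeping straight the direction of the pushforward functor. If desired, one could also remark that the same argument applied with ``pseudo-coherent'' in place of ``perfect'' shows $Rf_{\ast}$ preserves (pseudo-)coherence, which is the version one would want for the push-forward maps on Chow groups discussed in the sequel.
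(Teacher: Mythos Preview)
Your proposal is correct and matches the paper's approach: the corollary is stated immediately after the SGA6 lemma with no separate proof, being treated as an immediate specialization of that lemma under the hypothesis that $Y$ is (locally) noetherian and $f$ is perfect. Your observation about the reversed arrow in the displayed functor is also apt; the intended direction is indeed $D^{perf}(X)\to D^{perf}(Y)$.
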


We expect that  $Rf_{\ast}$ act like $Lf^{\ast}$,  respecting the filtration by dimension of support. In fact, it does if we allow some assumptions. The following lemma has been proved by Klein in [19]
for both $X$ and $Y$ integral, non-singular, separated schemes of finite type over an algebraically closed field. His proof also works in our setting.

\begin{lemma}[19, lemma 4.3.1]
Let $f: X \to Y$ be a proper morphism between algebraic varieties defined over an algebraically closed field. Suppose that $f$ is a perfect map(so the above corollary applies).
The functor $Rf_{\ast} : D^{perf}(X) \to D^{perf}(Y)$ respects the filtration by dimension of support
\[
  Rf_{\ast} :  D^{perf}(X)_{(p)} \to D^{perf}(Y)_{(p)}.
\]
\end{lemma}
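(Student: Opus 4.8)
The plan is to follow Klein's argument from [19, lemma 4.3.1] essentially verbatim, observing that it never uses regularity of $X$ or $Y$: it rests only on properness of $f$ (to make images of closed sets closed) together with the fact that a morphism of schemes does not increase the dimension of a closed subset. Unwinding the definitions, what must be shown is that if $E^{\bullet} \in D^{perf}(X)$ satisfies $dim(supp(E^{\bullet})) \leq p$, then $dim(supp(Rf_{\ast}E^{\bullet})) \leq p$. Since $X$ and $Y$ are noetherian, the cohomology sheaves of a perfect complex are coherent, so $supp(E^{\bullet}) = \bigcup_{i} supp(H^{i}(E^{\bullet}))$ is a closed subset of $X$; under the homeomorphism $Spc(D^{perf}(X)) \simeq X$ of Balmer's reconstruction theorem this agrees with the tensor-triangular support used to build the filtration $D^{perf}(X)_{(p)}$. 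Because $f$ is a perfect map, the preceding corollary guarantees that $Rf_{\ast}E^{\bullet}$ is again perfect, so $supp(Rf_{\ast}E^{\bullet})$ is closed in $Y$ and the assertion $Rf_{\ast}E^{\bullet} \in D^{perf}(Y)_{(p)}$ is precisely the dimension bound just stated.

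First I would establish the inclusion $supp(Rf_{\ast}E^{\bullet}) \subseteq f(supp(E^{\bullet}))$. As $f$ is proper, $Z := f(supp(E^{\bullet}))$ is closed in $Y$; put $V = Y \setminus Z$. Then $f^{-1}(V) \cap supp(E^{\bullet}) = \emptyset$, so $E^{\bullet}$ restricts to an acyclic complex (a zero object) on $f^{-1}(V)$, and since $Rf_{\ast}$ commutes with restriction along the open, hence flat, immersion $V \hookrightarrow Y$ (flat base change), we get $(Rf_{\ast}E^{\bullet})|_{V} \simeq 0$; hence $supp(Rf_{\ast}E^{\bullet}) \subseteq Z$. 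Next I would invoke the elementary fact that $dim(f(W)) \leq dim(W)$ for every closed $W \subseteq X$: decompose $W$ into irreducible components $W_{j}$, note that each $W_{j} \to \overline{f(W_{j})}$ is a dominant morphism of integral schemes, so $dim(\overline{f(W_{j})}) \leq dim(W_{j})$ by comparison of transcendence degrees of function fields, use properness to see $f(W_{j})$ is already closed (hence equal to $\overline{f(W_{j})}$), and take the supremum over $j$.

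Combining the two steps with $W = supp(E^{\bullet})$ yields
\[
 dim(supp(Rf_{\ast}E^{\bullet})) \leq dim(f(supp(E^{\bullet}))) \leq dim(supp(E^{\bullet})) \leq p,
\]
which is the claim. I do not expect a serious obstacle here: the only point requiring a little care is the bookkeeping identifying the tensor-triangular support in $Spc(D^{perf}(-))$ with the ordinary homological support of a perfect complex and with the scheme-theoretic image, together with the verification that properness of $f$ makes $f(supp(E^{\bullet}))$ closed, which is exactly what allows one to replace "closure of the image" by "the image" in the dimension estimate. Everything else is formal, and in particular the passage from Klein's regular setting to arbitrary algebraic varieties over an algebraically closed field requires no new input.
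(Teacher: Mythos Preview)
Your proposal is correct and is exactly the approach the paper has in mind: the paper does not give its own proof but simply asserts that Klein's argument from [19, lemma 4.3.1] ``also works in our setting,'' i.e.\ that regularity of $X$ and $Y$ is never used. You have spelled out precisely that argument --- properness makes $f(\mathrm{supp}(E^{\bullet}))$ closed, flat base change gives $\mathrm{supp}(Rf_{\ast}E^{\bullet}) \subseteq f(\mathrm{supp}(E^{\bullet}))$, and a dominant morphism of integral schemes cannot raise dimension --- and correctly observed that none of these steps requires nonsingularity.
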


Consequently, we have the following theorem
\begin{theorem} 
With the above assumption,  $Rf_{\ast}$ induces group homomorphisms

\[
  CH(Rf_{\ast}): CH_{p}(D^{perf}(X)) \to CH_{p}(D^{perf}(Y)).
\]
\end{theorem}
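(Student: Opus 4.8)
The plan is to mimic exactly the argument used for flat pull-back in Theorem 3.9, replacing $Lf^{\ast}$ by $Rf_{\ast}$ and reversing the direction of all arrows. First I would invoke Lemma 3.14, which tells us that under the stated hypotheses ($f$ proper, perfect, between algebraic varieties over an algebraically closed field) the functor $Rf_{\ast}$ respects the filtration by dimension of support:
\[
  Rf_{\ast}: D^{perf}(X)_{(p)} \to D^{perf}(Y)_{(p)}, \qquad Rf_{\ast}: D^{perf}(X)_{(p-1)} \to D^{perf}(Y)_{(p-1)}.
\]
By the universal property of Verdier quotients this descends to a functor on the quotients $D^{perf}(X)_{(p)}/D^{perf}(X)_{(p-1)} \to D^{perf}(Y)_{(p)}/D^{perf}(Y)_{(p-1)}$, and since idempotent completion is functorial (the cited [7]), it extends to the idempotent completions $(D^{perf}(X)_{(p)}/D^{perf}(X)_{(p-1)})^{\#} \to (D^{perf}(Y)_{(p)}/D^{perf}(Y)_{(p-1)})^{\#}$.

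Next I would observe that, because $Rf_{\ast}$ is compatible with the short sequences $\mathcal{L}_{(p-1)} \to \mathcal{L}_{(p)} \to (\mathcal{L}_{(p)}/\mathcal{L}_{(p-1)})^{\#}$ for both $X$ and $Y$, it induces a morphism of the associated long exact sequences in K-theory, hence a morphism of coniveau spectral sequences $E_{1}^{\ast,\ast}(X) \to E_{1}^{\ast,\ast}(Y)$. On the $(-q)^{\text{th}}$ line this is precisely a chain map of Gersten complexes, giving a commutative ladder
\[
  \begin{CD}
   \cdots @>>> \bigoplus_{x \in X_{(-(q-1))}} K_{1}(O_{X,x} \text{ on } x) @>d_{1}^{q-1,-q}>> \bigoplus_{x \in X_{(-q)}} K_{0}(O_{X,x} \text{ on } x) @>d_{1}^{q,-q}>> \cdots \\
   @. @VRf_{\ast}VV @VRf_{\ast}VV @. \\
   \cdots @>>> \bigoplus_{y \in Y_{(-(q-1))}} K_{1}(O_{Y,y} \text{ on } y) @>d_{1}^{q-1,-q}>> \bigoplus_{y \in Y_{(-q)}} K_{0}(O_{Y,y} \text{ on } y) @>d_{1}^{q,-q}>> \cdots
  \end{CD}
\]
Since $Z_{q}(D^{perf}(-)) = \operatorname{Ker}(d_{1}^{q,-q})$ and $Z_{q,\mathrm{rat}}(D^{perf}(-)) = \operatorname{Im}(d_{1}^{q-1,-q})$ by Definition 3.6, a chain map automatically carries cycles to cycles and boundaries to boundaries, so it passes to the quotient and yields $CH(Rf_{\ast}): CH_{p}(D^{perf}(X)) \to CH_{p}(D^{perf}(Y))$.

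The only genuinely nontrivial input here is Lemma 3.14, i.e. the claim that $Rf_{\ast}$ does not increase the dimension of support; this is exactly where properness of $f$ is used (the image of a closed set is closed, and dimension cannot increase along a proper map), and the excerpt already grants it by citing Klein's [19, Lemma 4.3.1] and asserting that his proof goes through in the present setting. Given that lemma, everything else is formal diagram-chasing identical to the proof of Theorem 3.9, so I would keep the write-up parallel to that proof and not belabor the spectral-sequence bookkeeping. The one point I would be slightly careful about is making sure the functoriality of idempotent completion and of the coniveau exact couple is invoked with the variance running $X \to Y$ rather than $Y \to X$, but this is purely notational and presents no real obstacle.
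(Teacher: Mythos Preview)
Your proposal is correct and is exactly the paper's approach: the paper's proof consists of the single sentence ``Similar to the above theorem 3.12,'' i.e.\ rerun the flat pull-back argument with $Rf_{\ast}$ in place of $Lf^{\ast}$, using the lemma that $Rf_{\ast}$ preserves the dimension-of-support filtration. (Your numbering is slightly off---what you call Theorem~3.9 and Lemma~3.14 are the paper's Theorem~3.12 and Lemma~3.15---but the content is identical.)
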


\begin{proof}
Similiar to the above theorem 3.12.
\end{proof}

\subsection{Relative Chow groups and Gersten conjecture}
\label{Relative Chow groups and Gersten conjecture}
In [23], Pedrini-Weibel defined the relative Chow group $CH^{q}(X,Y)$. To be precise, 
\begin{definition}
Let $X$ be a connected $d$-dimensional quasi-projective variety($d \geqslant 2$) whose singular locus $sing(X)$ is finite and is contained in a finite closed set $Y$. The relative Chow group $CH^{q}(X,Y)$ was defined to be the cokernel of the cycle map:
\[
 \coprod_{\substack{codim(Z)=q-1 \\ Z\cap Y = \emptyset}}k(Z)^{\ast} \to \coprod_{\substack{codim(Z)=q \\ Z\cap Y = \emptyset}}\mathbb{Z}
\]
\end{definition}
Moreover, Pedrini-Weibel showed that Bloch's formula holds true for relative Chow group $CH^{q}(X,Y)$.
\begin{theorem}[24]

Let $X$ be a connected $d$-dimensional quasi-projective variety($d \geqslant 2$) whose singular locus $sing(X)$ is contained in a finite closed set $Y$. Then for $2\leqslant q \leqslant d$, there are maps
\[
 \beta^{q}: CH^{q}(X,Y) \to H^{q}(X, \underline{K}_{q}(O_{X}))
\]
which are isomorphisms modulo $(d-1)!$ torsion. 
\end{theorem}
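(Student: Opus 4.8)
The plan is to produce $\beta^{q}$ from the coniveau (Cousin) complex of the Zariski sheaf $\underline{K}_{q}(\mathcal{O}_{X})$ on $X$, and then to control its kernel and cokernel by playing the regular open set $U := X \setminus Y$ against the finitely many, possibly singular, points of $Y$.

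First I would write down the coniveau spectral sequence for $\underline{K}_{q}(\mathcal{O}_{X})$, with $E_{1}$-term the complex $\bigoplus_{x \in X^{(p)}} K_{q-p}(\mathcal{O}_{X,x}\ \mathrm{on}\ x)$, and note that a relative cycle in the sense of the definition above --- a codimension-$q$ closed subvariety $Z \subset X$ with $Z \cap Y = \emptyset$ --- defines a class in $\bigoplus_{x \in X^{(q)}} K_{0}(\mathcal{O}_{X,x}\ \mathrm{on}\ x)$ that is automatically a $d_{1}$-cocycle: every codimension-$(q+1)$ point lying in $\overline{Z}$ avoids $Y$, hence has regular local ring, so the $K_{-1}$-groups it contributes vanish and $d_{1}([Z]) = 0$. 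The same mechanism identifies the rational equivalences of the Pedrini--Weibel complex with $d_{1}$-boundaries, so one obtains a well-defined map $\beta^{q}$ from $CH^{q}(X,Y)$ to the cohomology of the Cousin complex at codimension $q$, and the coniveau spectral sequence relates the latter to $H^{q}(X, \underline{K}_{q}(\mathcal{O}_{X}))$. Surjectivity of $\beta^{q}$ onto $H^{q}$ follows formally: any class in $H^{q}$ is represented on the $E_{1}$-page by a cycle supported on the regular locus $U$, which by Quillen's Gersten resolution [25] on $U$ may be taken to be an honest codimension-$q$ cycle and --- using $2 \le q$ and $2 \le d$ --- moved by rational equivalence off the finite set $Y$.

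For injectivity I would compare $H^{q}(X, \underline{K}_{q})$ with $H^{q}(U, \underline{K}_{q})$. Since $U$ is regular of finite type over $k$, Quillen's theorem gives the Gersten resolution on $U$ and hence $H^{q}(U, \underline{K}_{q}) \cong CH^{q}(U)$. The localization sequence for the closed embedding $Y \hookrightarrow X$ expresses the discrepancy between the two $K$-cohomologies through the local cohomology groups $H^{\ast}_{\mathfrak{m}_{y}}(\operatorname{Spec}\mathcal{O}_{X,y}, \underline{K}_{q})$ at the finitely many points $y \in Y$; as $\dim \mathcal{O}_{X,y} = d$, these are governed by the Cousin complex of the $d$-dimensional local ring $\mathcal{O}_{X,y}$. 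On a regular local ring that complex is a resolution and the correction terms vanish; at a singular $\mathcal{O}_{X,y}$ this exactness is exactly what fails, and quantifying the failure is the heart of the argument.

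The main obstacle is therefore this local comparison at the singular points, and it is precisely what forces the ``modulo $(d-1)!$ torsion'' clause. The tool I would use is a Riemann--Roch / Adams-operation argument on the $\gamma$-filtration: on the weight-graded pieces, $K$-theory with supports in codimensions up to $q \le d$ is pinned down by the naive cycle groups after $(d-1)!$ is inverted, because ``Riemann--Roch without denominators'' in this range carries only denominators dividing $(d-1)!$. Concretely, the composite of $\beta^{q}$ with a canonical map $H^{q}(X, \underline{K}_{q}) \to CH^{q}(X,Y)$ built from restriction to $U$ is multiplication by a divisor of $(d-1)!$, so the kernel of $\beta^{q}$ is annihilated by $(d-1)!$; together with the surjectivity already established, this shows $\beta^{q}$ is an isomorphism modulo $(d-1)!$-torsion for all $q$ with $2 \le q \le d$. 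The hypotheses $d \ge 2$ and $q \ge 2$ are used only to move codimension-$(q-1)$ subvarieties, and the cycles themselves, away from the finite bad set $Y$, so that the relative cycle complex genuinely computes $CH^{q}(X,Y)$.
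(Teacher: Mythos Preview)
The paper does not prove this theorem: it is quoted verbatim as a result of Pedrini--Weibel \cite{PW-2} (reference [24]) and no argument is given. Consequently there is no ``paper's own proof'' against which to compare your proposal.

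Your outline is in the spirit of the original Pedrini--Weibel argument: construct $\beta^{q}$ via the coniveau/Cousin complex, exploit that $Y$ is $0$-dimensional so that cycles and relations can be moved off $Y$ inside the regular open $U=X\setminus Y$, and control the discrepancy at the singular points using Adams operations / Riemann--Roch without denominators, which is where the factor $(d-1)!$ enters. Two points in your sketch deserve more care. First, you pass freely between ``cohomology of the Cousin complex at spot $q$'' and $H^{q}(X,\underline{K}_{q})$; on a singular $X$ the sheafified Cousin complex need not be a resolution of $\underline{K}_{q}$, so this identification is not automatic and is in fact part of what one is trying to establish --- one must argue that the failure of exactness is supported on the finite set $Y$ and then quantify it. Second, the ``canonical map $H^{q}(X,\underline{K}_{q})\to CH^{q}(X,Y)$ built from restriction to $U$'' whose composite with $\beta^{q}$ you claim is multiplication by a divisor of $(d-1)!$ is asserted rather than constructed; in the actual argument this inverse-up-to-torsion is obtained from the Adams eigenspace decomposition and Soul\'e's Riemann--Roch, and writing it down precisely is the crux of the proof. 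These are genuine details to fill in, but your overall strategy is the right one.
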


It is very interesting to compare our $CH_{q}(D^{perf}(X))$ with Pedrini-Weibel's relative Chow groups $CH^{q}(X,Y)$. This might be closely related with the Gersten conjecture. So we suggest the following question.
\begin{question}
Let $X$ be a connected $d$-dimensional quasi-projective variety($d \geqslant 2$) whose singular locus $sing(X)$ is contained in a finite closed set $Y$. For $2\leqslant q \leqslant d$, after tensoring with $\mathbb{Q}$, is the sheafification of the following Gersten complex a flasque resolution ?
\[
 0 \to K_{q}(X) \to K_{q}(k(X)) \to 
 \dots \to \bigoplus_{x \in X^{(d)}}K_{q-d}(O_{X,x} \ on \ x) \to 0
\]
\end{question}

\begin{remark}
The answer to the above question is negative in general. There exist examples disproving it, e.g Proposition 9 in [4]. Thanks B.Totaro and C.Pedrini for pointing out this.
\end{remark}

\begin{corollary}
When the above question 3.19 holds true with some assumptions, tensoring with $\mathbb{Q}$, then our Chow groups $CH_{q}(D^{perf}(X))$ agree with Pedrini-Weibel's relative Chow groups $CH^{q}(X,Y)$. 
\end{corollary}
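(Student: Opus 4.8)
The statement I need to establish is Corollary 3.21: under the hypotheses of Question 3.19 (after tensoring with $\mathbb{Q}$), the groups $CH_{q}(D^{perf}(X))$ agree with Pedrini–Weibel's relative Chow groups $CH^{q}(X,Y)$. The strategy is to route both sides through sheaf cohomology of the $K$-theory sheaf $\underline{K}_{q}(O_{X})$. First I would observe that, by Definition 3.6 and Theorem 3.5, $CH_{q}(D^{perf}(X))$ is the $q$-th cohomology of the Gersten complex written at the bottom of Question 3.19, with terms $\bigoplus_{x\in X^{(p)}}K_{q-p}(O_{X,x}\text{ on }x)$ (taking $D^{perf}(X)$ with the dimension function $-codim_{Krull}$, exactly as in the proof of Theorem 3.8). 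After $\otimes\,\mathbb{Q}$, the sheafification of this complex is a complex of sheaves resolving $\underline{K}_{q}(O_{X})_{\mathbb{Q}}$ in degree $0$; the hypothesis of Question 3.19 is precisely that this resolution is flasque. Granting that, the standard argument (as in Quillen's proof of Bloch's formula, reproduced in Theorem 3.8) gives
\[
CH_{q}(D^{perf}(X))_{\mathbb{Q}} \;=\; H^{q}\bigl(X,\underline{K}_{q}(O_{X})_{\mathbb{Q}}\bigr).
\]

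Next I would bring in the Pedrini–Weibel side. By Theorem 3.18 (their Bloch formula for the relative Chow group), for $2\le q\le d$ there is a map $\beta^{q}\colon CH^{q}(X,Y)\to H^{q}(X,\underline{K}_{q}(O_{X}))$ which is an isomorphism modulo $(d-1)!$-torsion; hence after $\otimes\,\mathbb{Q}$ it becomes an honest isomorphism
\[
CH^{q}(X,Y)_{\mathbb{Q}} \;\xrightarrow{\ \sim\ }\; H^{q}\bigl(X,\underline{K}_{q}(O_{X})_{\mathbb{Q}}\bigr).
\]
Composing the displayed identification with the inverse of $\beta^{q}\otimes\mathbb{Q}$ yields the desired comparison $CH_{q}(D^{perf}(X))_{\mathbb{Q}}\cong CH^{q}(X,Y)_{\mathbb{Q}}$. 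One should also check that the rational coefficients do not interfere with flasqueness of the resolution, which is automatic: flasqueness is preserved under $\otimes_{\mathbb{Z}}\mathbb{Q}$ since it is a filtered colimit and sheafification and restriction to opens commute with filtered colimits.

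The one genuinely non-formal input is the flasqueness of the Gersten resolution after tensoring with $\mathbb{Q}$, i.e. exactly the content of Question 3.19; this is why the corollary is stated conditionally ("when the above question holds true"). So the main obstacle is not in the present proof but is isolated cleanly into that open question (and Remark 3.20 shows it can fail without extra hypotheses, e.g. by Proposition 9 of [4]). Beyond that, the only points needing a line of care are: (i) checking that the indexing conventions of the Gersten complex in Definition 3.3/3.6 match the complex displayed in Question 3.19, which is the same identification $X_{(-i)}=X^{(i)}$ already used in the proof of Theorem 3.8; and (ii) confirming that the cycle-level map underlying $\beta^{q}$ is compatible with the boundary map $d_{1}^{q-1,-q}$ defining rational equivalence on our side, so that the two quotients really are identified and not merely abstractly isomorphic — but this compatibility is built into Pedrini–Weibel's construction of $\beta^{q}$ in [24], which factors through the same Gersten-type complex.
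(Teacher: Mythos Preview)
The paper states this corollary without proof; it is left as an immediate consequence of the preceding setup. Your argument is precisely the intended one: assuming Question 3.19, the sheafified Gersten complex is a flasque resolution of $\underline{K}_{q}(O_{X})_{\mathbb{Q}}$, so by Definition 3.6 one gets $CH_{q}(D^{perf}(X))_{\mathbb{Q}} = H^{q}(X,\underline{K}_{q}(O_{X})_{\mathbb{Q}})$, and then Theorem 3.18 (Pedrini--Weibel) identifies the right-hand side with $CH^{q}(X,Y)_{\mathbb{Q}}$.

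Two minor remarks. First, your aside about flasqueness being preserved under $\otimes\,\mathbb{Q}$ is unnecessary here: Question 3.19 already asks the question \emph{after} tensoring with $\mathbb{Q}$, so the hypothesis hands you the rational flasque resolution directly. Second, your point (ii) about cycle-level compatibility of $\beta^{q}$ with $d_{1}^{q-1,-q}$ is a reasonable scruple, but at the level of generality of the corollary (an abstract identification of groups, conditional on an open question) the paper is content with the isomorphism obtained by passing through $H^{q}(X,\underline{K}_{q}(O_{X})_{\mathbb{Q}})$; no finer compatibility is claimed or needed.
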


\section{Milnor K-theoretic Chow groups of derived categories of regular schemes and thickenings}
\label{Milnor K-theoretic Chow groups of derived categories of regular schemes and thickenings}

\subsection{Definition}
\label{Definition}
Keeping Soul\'e's variant of Bloch's formula in mind, we would like to define Milnor K-theoretic Chow groups of derived categories of schemes. However, we don't have Milnor K-theory with support $K_{m}^{M}(O_{X,x} \ on \ x)$ directly. The following theorem of Soul\'e [29] suggests that we can use suitable eigen-space of Adams' operations to replace Milnor K-theory, ignoring torsion.
\begin{theorem}[Soul\'e, 29]

 For $X$ a regular scheme of finite type over a field $k$ with characteristic $0$, let $\underline{K}^{M}_{m}(O_{X})$(resp. $\underline{K}^{(m)}_{m}(O_{X})$) denote the 
sheaf associated to the presheaf
\[
  U \to K^{M}_{m}(O_{X}(U))
\]
(resp. $K^{(m)}_{m}(O_{X}(U))$), where $K_{m}^{(m)}$ is the eigen-space of $\psi^{k}=k^{m}$(Adams' operations $\psi^{k}$ is recalled in section 4.3).

After ignore torsion, we have the following identification
\[
 \underline{K}^{(m)}_{m}(O_{X}) = \underline{K}^{M}_{m}(O_{X}).
\]
\end{theorem}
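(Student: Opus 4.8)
The plan is to reduce the statement to Soul\'e's original comparison theorem from his paper [29], which establishes exactly this identification $\underline{K}^{(m)}_{m}(O_X) = \underline{K}^M_m(O_X)$ modulo torsion for regular schemes of finite type over a characteristic zero field. Since the statement is quoted as ``Theorem [Soul\'e, 29]'', the honest approach is not to reprove it from scratch but to assemble the ingredients that make it work and indicate where each lives. First I would recall the two natural maps relating the sheaves: the product in $K$-theory gives a map from the Milnor $K$-theory presheaf $U \mapsto K^M_m(O_X(U))$ to the Quillen $K$-theory presheaf $U \mapsto K_m(O_X(U))$, and composing with projection onto the weight-$m$ eigenspace of the Adams operations yields a presheaf map $\underline{K}^M_m(O_X) \to \underline{K}^{(m)}_m(O_X)$ after sheafification.

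The key steps, in order, would be: (1) reduce to the stalks, i.e. to local rings $O_{X,x}$ which are regular local rings essentially of finite type over $k$, since both sides are sheaves and a morphism of sheaves is an isomorphism iff it is so on stalks; (2) invoke the fact that for such regular local rings, the Milnor $K$-group $K^M_m(O_{X,x})$ maps to $K_m(O_{X,x})$ with kernel and cokernel annihilated by $(m-1)!$ — this is the Suslin--Nesterenko / Totaro-type statement, refined by Soul\'e's weight considerations so that the image lands precisely in (and rationally exhausts) the weight-$m$ part $K^{(m)}_m$; (3) check that the $\gamma$-filtration / Adams eigenspace decomposition behaves well under localization and that $\mathrm{gr}^m_\gamma K_m$ agrees rationally with $K^{(m)}_m$; (4) conclude that after tensoring with $\mathbb{Q}$ (equivalently, modulo torsion) the map is an isomorphism on stalks, hence on sheaves.

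The main obstacle, I expect, is step (2): controlling the comparison between Milnor and Quillen $K$-theory \emph{with the correct weight} for regular local rings that are not fields. Over a field this is classical (Milnor $K$-theory is the weight-$m$ graded piece of the $\gamma$-filtration on $K_m$, rationally), but for higher-dimensional regular local rings one needs either a Gersten-type argument reducing to the residue fields, or the Bloch--Lichtenbaum / Friedlander--Suslin--style input that Soul\'e actually uses; this is precisely why the hypothesis ``regular, finite type over a characteristic zero field'' is imposed, since it guarantees the Gersten resolution and rational generation of $K^{(m)}_m$ by symbols. A secondary technical point is checking that sheafification commutes with the eigenspace projection, which is routine since $\mathbb{Q}$-linear idempotents split on each stalk compatibly. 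I would therefore present the proof as: cite Soul\'e [29] for the field case and the weight statement, note that the Gersten resolution (available by regularity and characteristic zero) propagates the field-level isomorphism to all local rings, and observe that the whole argument is $\mathbb{Q}$-linear so ``modulo torsion'' is exactly what survives.
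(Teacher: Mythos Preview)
The paper does not actually prove this theorem: it is stated with the attribution ``[Soul\'e, 29]'' and no proof is supplied, since it is simply a citation of Soul\'e's result from \emph{Op\'erations en K-th\'eorie alg\'ebrique}. Your recognition that the honest approach is to treat it as a quoted external theorem is exactly right, and in fact your outline goes further than the paper does by sketching the ingredients (reduction to stalks, symbol map into the weight-$m$ piece, Gersten-type input, rational splitting of the Adams eigenspaces); none of this appears in the paper itself.
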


Following Soul\'e 's theorem, we define Milnor K-theory with support $K_{m}^{M}(O_{X,x} \ on \ x)$ to be 
suitable eigen-space of $K_{m}(O_{X,x} \ on \ x)$. 
\begin{definition}
Let $X$ be a $d$-dimensional noetherian scheme and $x \in X$ satisfy $dimO_{X,x} = j$. After tensoring with $\mathbb{Q}$, Milnor K-theory with support $K_{m}^{M}(O_{X,x} \ on \ x)$ is defined to be 
\[
  K_{m}^{M}(O_{X,x} \ on \ x) := K_{m}^{(m+j)}(O_{X,x} \ on \ x),
\] 
where $K_{m}^{(m+j)}$ is the eigen-space of $\psi^{k}=k^{m+j}$.
\end{definition}

The reason why we choose $K_{m}^{(m+j)}$ to define $K_{m}^{M}$ is inspired by another theorem of Soul\'e, \textbf{Riemann-Roch without denominator}. 
\begin{theorem}
 Riemann-Roch without denominator--[Soul\'e, 29]

For $X$ regular scheme of finite type and $\eta \in X^{(j)}$, we have(for any integer $m$ and $i$)
\[
 K_{m}^{(i)}(O_{X,\eta} \ on \ \eta) = K_{m}^{(i-j)}(k(\eta)).
\]
\end{theorem}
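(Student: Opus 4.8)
The plan is to reduce the statement about $K$-theory with support on the point $\eta$ to a statement about the $K$-theory of the residue field $k(\eta)$, using Quillen's dévissage together with the compatibility of Adams operations with the dévissage isomorphism, and then to track the shift in weight that dévissage introduces. First I would recall that, since $X$ is regular of finite type over a field and $\eta\in X^{(j)}$, the local ring $O_{X,\eta}$ is a regular local ring of dimension $j$. The category of perfect complexes on $\operatorname{Spec}(O_{X,\eta})$ with homology supported on the closed point $\eta$ is, by regularity, equivalent to the bounded derived category of finite-length $O_{X,\eta}$-modules; hence $K_m(O_{X,\eta}\ \text{on}\ \eta)\cong K_m$ of the abelian category of finite-length modules, and Quillen's dévissage (applied along the composition series whose factors are copies of $k(\eta)$) gives an isomorphism $K_m(O_{X,\eta}\ \text{on}\ \eta)\xrightarrow{\ \sim\ }K_m(k(\eta))$.

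The heart of the argument is then to show that this dévissage isomorphism is \emph{not} weight-preserving but shifts the Adams weight by exactly $j=\dim O_{X,\eta}$: an element of weight $i$ on the left corresponds to an element of weight $i-j$ on the right, so that the eigenspace $K_m^{(i)}(O_{X,\eta}\ \text{on}\ \eta)$ matches $K_m^{(i-j)}(k(\eta))$. The standard way to see this is to factor the support-to-residue-field passage through a sequence of $j$ regular closed immersions of codimension one (a regular system of parameters for $O_{X,\eta}$), for each of which the Gysin/dévissage map is known to raise the weight by $1$ — this is precisely the content of "Riemann-Roch without denominators" for a single regular embedding of codimension one, where the Todd-class correction term is trivial in the graded pieces and only the codimension shift survives rationally. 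Iterating $j$ times yields the total shift by $j$. Equivalently, one can invoke the multiplicativity of Adams operations together with the fact that the class of $k(\eta)$ in $K_0(O_{X,\eta}\ \text{on}\ \eta)$, via the Koszul resolution on the regular parameters, is an alternating sum $\sum_p(-1)^p\Lambda^p$ of a rank-$j$ bundle, whose top Adams-weight contribution accounts for the shift.

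I expect the main obstacle to be making the weight-shift bookkeeping precise and functorial: one must verify that Adams operations are defined on $K$-theory with supports in a way compatible with dévissage, and that the "without denominators" phenomenon — i.e. that rationally the correction is a pure shift rather than a genuine polynomial in Chern classes — holds in this relative setting. Once the single-codimension-one case is in hand (which is classical, going back to SGA6 and to Soulé's own work on $\lambda$-operations in $K$-theory with supports), the general case follows by an induction on $j$ using the localization/transitivity of support categories, $K_m(O_{X,\eta}\ \text{on}\ \eta)$ being built from a flag of regular subschemes. The cases of small $m$ (including negative $m$, which vanish here by regularity of $k(\eta)$) are then automatic and require no separate treatment.
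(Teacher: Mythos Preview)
The paper does not give its own proof of this statement: it is quoted as a theorem of Soul\'e (reference [29], \emph{Op\'erations en K-th\'eorie alg\'ebrique}) and used as a black box to justify Definition~4.2. So there is nothing in the paper to compare your argument against beyond the bare citation.

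That said, your sketch is an accurate outline of how the result is actually proved in Soul\'e's paper. The two ingredients you identify are exactly the right ones: (i) for a regular local ring $O_{X,\eta}$ of dimension $j$, d\'evissage identifies $K_m(O_{X,\eta}\ \text{on}\ \eta)$ with $K_m(k(\eta))$; and (ii) this identification shifts the Adams weight by $j$, because the fundamental class $[k(\eta)]\in K_0(O_{X,\eta}\ \text{on}\ \eta)$ is represented by the Koszul complex on a regular system of parameters and hence has pure weight $j$, so that cap-product with it (which realises d\'evissage on the $\lambda$-ring level) sends weight $i-j$ to weight $i$. Your factorisation through $j$ successive codimension-one regular immersions is one clean way to see (ii); Soul\'e's own treatment works more directly with the $\lambda$-ring structure on $K$-theory with supports that he constructs in that paper, but the content is the same. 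The only point I would flag is that your remark about negative $m$ ``vanishing by regularity of $k(\eta)$'' is fine for the right-hand side, but on the left-hand side one should also note that $K_m(O_{X,\eta}\ \text{on}\ \eta)$ vanishes for $m<0$ precisely because the d\'evissage isomorphism already holds at the level of spectra, so both sides are zero simultaneously; this is implicit in what you wrote but worth saying once.
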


This theorem says that our definition of Milnor K-theory with support is a honest generalization of the classical one, at least for regular case.

Next, we would like to define Milnor K-theoretic Chow groups of derived categories of schemes by mimicking definition 3.6. In order to do that, we need to detect whether the differentials of the Gersten complex respect Adams' operations. 

To fix the idea, we assume the tensor triangulated category $D^{perf}(X)$ is equipped with $-codim_{Krull}$ as a dimension function in the following. If the differentials $d_{1}^{p,-q}$ of the Gersten complex(definition 3.4) respect Adams' operations, for every $i \in \mathbb{Z}$, then there exists the following refiner (augmented)complex  
{\footnotesize
\begin{align*}
 0 \to & K_{q}^{(i)}(X) \to \dots \to \bigoplus_{x \in X^{(q-1)}}K_{1}^{(i)}(O_{X,x} \ on \ x) 
   \xrightarrow{d_{1,i}^{q-1,-q}} \bigoplus_{x \in X^{(i)}}K_{0}^{(i)}(O_{X,x} \ on \ x) \\  &\xrightarrow{d_{1,i}^{q,-q}} \bigoplus_{x \in X^{(q+1)}}K_{-1}^{(i)}(O_{X,x} \ on \ x) \to \dots \to \bigoplus_{x \in X^{(d)}}K_{q-d}^{(i)}(O_{X,x} \ on \ x) \to 0.
\end{align*}
}

We are interested particularly in the ``Milnor" part. One obtains the following refiner complex by taking $i=q$
{\footnotesize
\begin{align*}
 0 \to & K_{q}^{(q)}(X) \to \dots \to \bigoplus_{x \in X^{(q-1)}}K_{1}^{(q)}(O_{X,x} \ on \ x) 
   \xrightarrow{d_{1,q}^{q-1,-q}} \bigoplus_{x \in X^{(q)}}K_{0}^{(q)}(O_{X,x} \ on \ x) \\  &\xrightarrow{d_{1,q}^{q,-q}} \bigoplus_{x \in X^{(q+1)}}K_{-1}^{(q)}(O_{X,x} \ on \ x) \to 
 \dots \to \bigoplus_{x \in X^{(d)}}K_{q-d}^{M}(O_{X,x} \ on \ x) \to 0.
\end{align*}
}

After tensoring with $\mathbb{Q}$, this complex can be written as 
{\footnotesize
\begin{align*}
 0 \to & K_{q}^{M}(X) \to \dots \to \bigoplus_{x \in X^{(q-1)}}K_{1}^{M}(O_{X,x} \ on \ x) 
   \xrightarrow{d_{1,q}^{q-1,-q}} \bigoplus_{x \in X^{(q)}}K_{0}^{M}(O_{X,x} \ on \ x) \\ &\xrightarrow{d_{1,q}^{q,-q}} \bigoplus_{x \in X^{(q+1)}}K_{-1}^{M}(O_{X,x} \ on \ x) \to \dots
   \dots \to \bigoplus_{x \in X^{(d)}}K_{q-d}^{(q)}(O_{X,x} \ on \ x) \to 0.
\end{align*}
}

\begin{definition}
If the differentials $d_{1}^{p,-q}$ of the Gersten complex respect Adams' operations, then 
the Milnor  K-theoretic {\it q-cycles} and Milnor K-theoretic {\it rational equivalence} of $(X,O_{X})$, denoted $Z^{M}_{q}(D^{perf}(X))$ and $Z^{M}_{q,rat}(D^{perf}(X))$ respectively, are defined to be 
\[
  Z^{M}_{q}(D^{perf}(X))= Ker(d_{1,q}^{q,-q}),
\]
\[
  Z^{M}_{q,rat}(D^{perf}(X))=Im(d_{1,q}^{q-1,-q}).
\]

The $q^{th}$ Milnor K-theoretic Chow group  of $(X,O_{X})$, denoted by $CH^{M}_{q}(D^{perf}(X))$, is defined to be 

\[
  CH^{M}_{q}(D^{perf}(X))= \dfrac{Z^{M}_{q}(D^{perf}(X))}{Z^{M}_{q,rat}(D^{perf}(X))}.
\]
\end{definition}

We shall discuss two cases where the above definition works. One is {\it points on schemes} which don't have negative K-groups because of dimension reason. So we can use Adams' operations(at space level) to refine the Gersten complex. This is discussed in the appendix.

The second one is regular schemes and their thickenings discussed in the rest of this section. We do explicit computation on eigen-spaces of relative (negative) cyclic homology in 4.2.  Goodwillie and Cathelineau type results are proved in 4.3 where Adams' operations on K-groups are briefly recalled.

In 4.4, we show that Milnor K-theoretic Chow groups of regular schemes and their thickenings are well-defined. Moreover, we show Milnor K-theoretic Chow groups satisfy Bloch's formula. This gives a positive answer to Green-Griffiths' question on extending Bloch-Quillen identification.

\subsection{Adams' operations on negative cyclic homology}
\label{Adams' operations on negative cyclic homology}
In this subsection, we do explicit computation on Adams' eigen-spaces of relative (negative) cyclic homology for later use.

Let's begin with recalling notations and definitions. Let $A$ be any commutative $k$-algebra, where $k$ is a field of characteristic $0$, and $I$ be an ideal of $A$.
One can associate a Hochschild complexes $C^{h}_{\ast}(A)$ to $A$ which has $C_{n}^{h}(A)=A^{\otimes n+1}$:
\[
 C_{\ast}^{h} : \dots \xrightarrow{b} A^{\otimes \ast+1} \xrightarrow{b} \dots \xrightarrow{b} A\otimes A \xrightarrow{b} A \to 0.
\]

 The action of the symmetric groups on $C^{h}_{\ast}(A)$
gives the lambda operation
\[
 HH_{n}(A)=HH_{n}^{(1)}(A) \oplus \dots \oplus HH_{n}^{(n)}(A),
\]
and similarly
\[
 HC_{n}(A)=HC_{n}^{(1)}(A) \oplus \dots \oplus HC_{n}^{(n)}(A),
\]
\[
 HN_{n}(A)=HN_{n}^{(1)}(A) \oplus \dots \oplus HN_{n}^{(n)}(A).
\]

 There is also a Hochschild complexes  
$C^{h}_{\ast}(A/I)$ associated to  $A/I$. We use $C^{h}_{\ast}(A,I)$ to denote the kernel of the natural map
\[
 C^{h}_{\ast}(A) \rightarrow C^{h}_{\ast}(A/I).
\]
Then the relative Hochschild module $HH_{\ast}(A,I)$ is the homology of the complex $C^{h}_{\ast}(A,I)$. Moreover, the action of the symmetric groups on $C^{h}_{\ast}(A,I)$
gives the lambda operation
\[
 HH_{n}(A,I)=HH_{n}^{(1)}(A,I) \oplus \dots \oplus HH_{n}^{(n)}(A,I)
\]
and similarly
\[
 HC_{n}(A,I)=HC_{n}^{(1)}(A,I) \oplus \dots \oplus HC_{n}^{(n)}(A,I),
\]
\[
 HN_{n}(A,I)=HN_{n}^{(1)}(A,I) \oplus \dots \oplus HN_{n}^{(n)}(A,I).
\]

From now on,  $R$ is a regular Noetherian domain and commutative $\mathbb{Q}$-algebra, and $\varepsilon$ is the dual number. We consider $R[\varepsilon]= R \oplus \varepsilon R $ as a graded $\mathbb{Q}$-algebra.
The following SBI sequence is obtained from the corresponding eigen-piece of the relative Hochschild complex:
{\small
\[
 \rightarrow HC^{(i)}_{n+1}(R[\varepsilon],\varepsilon) \xrightarrow{S} HC^{(i-1)}_{n-1}(R[\varepsilon],\varepsilon) \xrightarrow{B} HH^{(i)}_{n}(R[\varepsilon],\varepsilon) \xrightarrow{I} HC^{(i)}_{n}(R[\varepsilon],\varepsilon) \rightarrow
\]
}
According to Geller-Weibel [17], the above S map is $0$ on $HC(R[\varepsilon],\varepsilon)$. This enable us to break the SBI sequence up into 
short exact sequence:
\[
 0 \rightarrow HC^{(i-1)}_{n-1}(R[\varepsilon],\varepsilon) \xrightarrow{B} HH^{(i)}_{n}(R[\varepsilon],\varepsilon) \xrightarrow{I} HC^{(i)}_{n}(R[\varepsilon],\varepsilon) \rightarrow 0
\]

In the following, we will use this short exact sequence to compute $HC^{(i)}_{n}(R[\varepsilon],\varepsilon)$.
\begin{theorem}
\begin{equation}
\begin{cases}
 \begin{CD}
 HC_{n}^{(i)}(R[\varepsilon],\varepsilon)=\Omega^{{2i-n}}_{R/ \mathbb{Q}}, \ for \  \frac{n}{2} \leq i \leq n.\\
  HC_{n}^{(i)}(R[\varepsilon],\varepsilon)=0, \ else.
 \end{CD}
\end{cases}
\end{equation} 

\end{theorem}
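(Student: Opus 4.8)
The plan is to compute $HC^{(i)}_n(R[\varepsilon],\varepsilon)$ by induction on $n$, using the short exact sequence
\[
 0 \rightarrow HC^{(i-1)}_{n-1}(R[\varepsilon],\varepsilon) \xrightarrow{B} HH^{(i)}_{n}(R[\varepsilon],\varepsilon) \xrightarrow{I} HC^{(i)}_{n}(R[\varepsilon],\varepsilon) \rightarrow 0
\]
as the engine. First I would settle the base case: for $n=0$ one has $HC_0(R[\varepsilon],\varepsilon) = \varepsilon R \cong R = \Omega^0_{R/\mathbb{Q}}$, concentrated in weight $i=0$, which matches the claimed range $\tfrac n2 \le i \le n$ collapsing to $i=0$. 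The input I would invoke is the known computation of the relative Hochschild homology of the dual numbers: after tensoring with $\mathbb{Q}$ and taking the weight decomposition, $HH^{(i)}_n(R[\varepsilon],\varepsilon)$ is $\Omega^{2i-n}_{R/\mathbb{Q}}$ for $\tfrac{n}{2} \le i \le n$ together with one extra copy coming from the $\varepsilon\, d\varepsilon \cdots$ terms, and zero otherwise; this is the $R[\varepsilon]$ version of the classical $HH_*(\mathbb{Q}[\varepsilon])$ calculation (Hochschild homology of a truncated polynomial algebra), relativized over the smooth base $R$ via the fact that $R$ is regular so $HH_*(R) = \Omega^*_{R/\mathbb{Q}}$ with the Hodge decomposition $HH^{(i)}_n(R) = \Omega^i_{R/\mathbb{Q}}$ for $i=n$ and zero otherwise.

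Next I would feed this into the SES. Knowing $HH^{(i)}_n(R[\varepsilon],\varepsilon)$ explicitly and the inductive hypothesis for $HC^{(i-1)}_{n-1}$, the sequence determines $HC^{(i)}_n$ as the cokernel of $B$. The key bookkeeping is to check that the weight ranges line up: if $\tfrac{n-1}{2} \le i-1 \le n-1$, i.e. $\tfrac{n+1}{2} \le i \le n$, then $HC^{(i-1)}_{n-1} = \Omega^{2(i-1)-(n-1)}_{R/\mathbb{Q}} = \Omega^{2i-n-1}_{R/\mathbb{Q}}$, and one reads off $HC^{(i)}_n = HH^{(i)}_n / \Omega^{2i-n-1}_{R/\mathbb{Q}}$. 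The arithmetic $2i-n-1$ versus $2i-n$ is exactly the shift that should make the quotient land on $\Omega^{2i-n}_{R/\mathbb{Q}}$, matching the claim, while outside the stated range both neighbouring terms vanish and hence so does $HC^{(i)}_n$. I would organize this as a diagram chase verifying that $B$ is injective (already given) with the expected image, so the quotient is as predicted; the edge cases $i = \lceil n/2 \rceil$ and $i=n$ need to be inspected separately since there the range for $HC^{(i-1)}_{n-1}$ is empty or the $HH$ term has its special extra summand.

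The main obstacle I anticipate is \emph{identifying the map $B$ precisely enough} to compute its cokernel, rather than just knowing source and target abstractly. Dimension counting alone shows $HC^{(i)}_n$ has the right ``size'' as an $R$-module, but to get the clean identification with $\Omega^{2i-n}_{R/\mathbb{Q}}$ (and not some extension or twist) one needs that $B$ is, up to the standard de Rham differential normalizations, the inclusion picking out the ``$d\varepsilon$-free'' part — this is where the Geller--Weibel input that $S=0$ is doing real work, since it is what guarantees the sequence splits into these short pieces and that $B$ has no higher obstructions. A secondary point to be careful about is the normalization of weights: the $\lambda$-decomposition conventions must be consistent between $HH$, $HC$, and $HN$, and the statement's range $\tfrac n2 \le i \le n$ has to be reconciled with the Hochschild computation's range; I would state the $HH$ computation as a lemma with the precise ranges and weights up front so the induction is purely formal afterwards. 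Finally I would note that the companion groups $HN^{(i)}_n(R[\varepsilon],\varepsilon)$ follow from the same SES machinery (or from the $HC \leftrightarrow HN$ duality after inverting $2$), which is what the subsequent sections will need.
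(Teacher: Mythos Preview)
Your proposal is correct and follows essentially the same route as the paper: both use the short exact SBI sequence (available because $S=0$ by Geller--Weibel), compute $HH^{(i)}_n(R[\varepsilon],\varepsilon)$ via the decomposition $R[\varepsilon]=R\otimes_{\mathbb{Q}}\mathbb{Q}[\varepsilon]$ together with the known weight-graded $HH_*(\mathbb{Q}[\varepsilon])$ (the paper invokes the K\"unneth formula and Loday 5.4.15 explicitly), and then run induction on $n$ with the edge case $i=n$ treated separately. Your worry about identifying $B$ is legitimate but the paper does not address it either---it simply reads off the cokernel from the sizes of source and target, so you are in fact being more scrupulous than the original on that point.
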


\begin{proof}
 Step 1. we will prove 
\[
 HC_{n}^{(i)}(R[\varepsilon],\varepsilon)= 0,  for \ i < \frac{n}{2}.
\]
by showing $HH^{(i)}_{n}(R[\varepsilon],\varepsilon)=0$. Noting that $HH^{(i)}_{n}(R)=0$, it suffices to show $HH^{(i)}_{n}(R[\varepsilon])=0$, for $i < \frac{n}{2}$.
By applying K\"{u}nneth formula to $R[\varepsilon]= R \otimes_{\mathbb{Q}} \mathbb{Q}[\varepsilon] $, we have 
\begin{align*}
 HH^{(i)}_{n}(R[\varepsilon])=& HH^{(0)}_{0}(R) \otimes HH^{(i)}_{n}(\mathbb{Q}[\varepsilon]) \oplus HH^{(1)}_{1}(R) \otimes HH^{(i-1)}_{n-1}(\mathbb{Q}[\varepsilon]) \oplus \\
 & \dots \oplus HH^{(i)}_{i}(R) \otimes HH^{(0)}_{n-i}(\mathbb{Q}[\varepsilon]).
\end{align*}
According to [22, 5.4.15], the only possibilities for $HH^{(i-j)}_{n-j}(\mathbb{Q}[\varepsilon])$ being nonzero are the followings:

\begin{equation}
\begin{cases}
 \begin{CD}
 n-j \ is \ even,  \ n-j=2(i-j).\\
 n-j \ is \ odd,  \ n-j + 1=2(i-j).
 \end{CD}
\end{cases}
\end{equation} 

Neither of them will occur, since $i < \frac{n}{2}$. Therefore, $HH^{(i)}_{n}(R[\varepsilon])=0$.

Step 2. we will show that
\[
 HC_{n}^{(i)}(R[\varepsilon],\varepsilon)= \Omega^{{2i-n}}_{R/ \mathbb{Q}}, for \ \frac{n}{2} \leq i < n.
\]
by computing $HH^{(i)}_{n}(R[\varepsilon],\varepsilon)$ directly and using induction on $HC^{(i-1)}_{n-1}(R[\varepsilon],\varepsilon)$.

Firstly, we have  $HH^{(i)}_{n}(R)=0$ and  $HH^{(i)}_{n}(R[\varepsilon])$ can be expressed as 
\begin{align*}
 HH^{(i)}_{n}(R[\varepsilon])= &HH^{(0)}_{0}(R) \otimes HH^{(i)}_{n}(\mathbb{Q}[\varepsilon]) \oplus HH^{(1)}_{1}(R) \otimes HH^{(i-1)}_{n-1}(\mathbb{Q}[\varepsilon]) \oplus \\
 & \dots \oplus HH^{(i)}_{i}(R) \otimes HH^{(0)}_{n-i}(\mathbb{Q}[\varepsilon]).
\end{align*}
According to [22, 5.4.15], the only possibilities for $HH^{(i-j)}_{n-j}(\mathbb{Q}[\varepsilon])$ being nonzero are the followings:
\begin{equation}
\begin{cases}
 \begin{CD}
 n-j \ is \ even, \ n-j=2(i-j), \ then \ j= 2i-n.\\
 n-j \ is \ odd, \ n-j + 1=2(i-j), \ then \ j= 2i-n-1.
 \end{CD}
\end{cases}
\end{equation} 

Therefore,
{\small
\[
 HH^{(i)}_{n}(R[\varepsilon])= HH^{(2i-n)}_{2i-n}(R) \otimes HH^{(n-i)}_{2n-2i}(\mathbb{Q}[\varepsilon]) \oplus HH^{(2i-n-1)}_{2i-n-1}(R) \otimes HH^{(n-i+1)}_{2n-2i+1}(\mathbb{Q}[\varepsilon]).
\]
}
\[
 HH^{(i)}_{n}(R[\varepsilon])= \Omega^{{2i-n}}_{R/ \mathbb{Q}} \oplus \Omega^{{2i-n-1}}_{R/ \mathbb{Q}}.
\]
By induction, 
\[
HC^{(i-1)}_{n-1}(R[\varepsilon],\varepsilon)= \Omega^{{2i-n-1}}_{R/ \mathbb{Q}}.
\]
thus, 
\[
 HC_{n}^{(i)}(R[\varepsilon],\varepsilon)= \Omega^{{2i-n}}_{R/ \mathbb{Q}}, for \ \frac{n}{2} \leq i < n.
\]

Step 3. We prove the formula for $i=n$. It is known that 
\[
 HH^{(n)}_{n}(R)= \Omega^{{n}}_{R/ \mathbb{Q}}.
\]
and 
\begin{align*}
 HH^{(n)}_{n}(R[\varepsilon])= & HH^{(0)}_{0}(R) \otimes HH^{(n)}_{n}(\mathbb{Q}[\varepsilon]) \oplus HH^{(1)}_{1}(R) \otimes HH^{(n-1)}_{n-1}(\mathbb{Q}[\varepsilon]) \oplus \\
 & \dots \oplus HH^{(n)}_{n}(R) \otimes HH^{(0)}_{0}(\mathbb{Q}[\varepsilon]).
\end{align*}

Since $HH^{(i)}_{i}(\mathbb{Q}[\varepsilon])=0$,unless $i=0,1$, we have 
\[
 HH^{(n)}_{n}(R[\varepsilon])= HH^{(n)}_{n}(R) \otimes HH^{(0)}_{0}(\mathbb{Q}[\varepsilon]) \oplus HH^{(n-1)}_{n-1}(R) \otimes HH^{(1)}_{1}(\mathbb{Q}[\varepsilon]),
\]
which can be simplified as 
\[
 HH^{(n)}_{n}(R[\varepsilon])= \Omega^{{n}}_{R/ \mathbb{Q}} \otimes \mathbb{Q}[\varepsilon] \oplus \Omega^{{n-1}}_{R/ \mathbb{Q}} \otimes \mathbb{Q}.
\]
 Therefore, we have
\[
 HH^{(n)}_{n}(R[\varepsilon],\varepsilon) = \Omega^{{n}}_{R/ \mathbb{Q}} \oplus \Omega^{{n-1}}_{R/ \mathbb{Q}}.
\]
Once again, we still have 
\[
HC^{(n)}_{n}(R[\varepsilon],\varepsilon) = \Omega^{{n}}_{R/ \mathbb{Q}}.
\]
\end{proof}
The above result tells us that
\begin{theorem}
\[
 HC_{n}(R[\varepsilon],\varepsilon) = \Omega^{{n}}_{R/ \mathbb{Q}}\oplus \Omega^{{n-2}}_{R/ \mathbb{Q}} \oplus \dots
\]
the last term is $\Omega^{{1}}_{R/ \mathbb{Q}}$ or $R$, depending on $n$ odd or even.
\end{theorem}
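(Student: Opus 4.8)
The plan is to read this statement off directly from the preceding theorem. Recall that the action of the symmetric groups on the relative Hochschild complex $C^{h}_{\ast}(R[\varepsilon],\varepsilon)$ yields the lambda decomposition
\[
 HC_{n}(R[\varepsilon],\varepsilon) = \bigoplus_{i=1}^{n} HC_{n}^{(i)}(R[\varepsilon],\varepsilon).
\]
So the entire content of the statement is to substitute, into this finite direct sum, the eigen-space formula just established: $HC_{n}^{(i)}(R[\varepsilon],\varepsilon) = \Omega^{2i-n}_{R/\mathbb{Q}}$ for $\frac{n}{2} \leq i \leq n$, and $0$ otherwise.

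First I would identify which summands survive. The nonzero terms are precisely those with $i$ an integer in the range $\frac{n}{2} \leq i \leq n$. Writing $i = n - s$, the exponent becomes $2i - n = n - 2s$, and as $i$ runs down from $n$ to $\lceil \frac{n}{2} \rceil$ the integer $s$ runs from $0$ to $\lfloor \frac{n}{2} \rfloor$. Hence
\[
 HC_{n}(R[\varepsilon],\varepsilon) = \bigoplus_{s=0}^{\lfloor n/2 \rfloor} \Omega^{n-2s}_{R/\mathbb{Q}} = \Omega^{n}_{R/\mathbb{Q}} \oplus \Omega^{n-2}_{R/\mathbb{Q}} \oplus \cdots,
\]
the exponents decreasing by $2$ at each step.

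Then I would split into the two parity cases for the last term. If $n$ is even, the smallest surviving exponent is $n - 2\cdot\frac{n}{2} = 0$, so the final summand is $\Omega^{0}_{R/\mathbb{Q}} = R$; if $n$ is odd, the smallest is $n - 2\cdot\frac{n-1}{2} = 1$, so the final summand is $\Omega^{1}_{R/\mathbb{Q}}$. This completes the argument.

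There is essentially no obstacle here: all the real work --- the Geller-Weibel vanishing of the $S$ map, the K\"unneth computation of $HH^{(i)}_{n}(\mathbb{Q}[\varepsilon])$, and the induction along the $B$-short exact sequences --- was already carried out in the previous theorem. The present statement is a bookkeeping corollary, recording the total relative cyclic homology of $R[\varepsilon]$ as the direct sum of its $\lambda$-pieces, and it requires no new idea.
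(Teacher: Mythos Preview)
Your proposal is correct and matches the paper's approach exactly: the paper simply states ``The above result tells us that'' before this theorem, treating it as an immediate corollary of the eigen-space computation in the preceding theorem, which is precisely what you do by summing the $\lambda$-pieces over the nonvanishing range $\frac{n}{2} \leq i \leq n$.
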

The following corollaries are obvious from the fact that for any commutative $k$-algebra  $A$ , where $k$ is a field of characteristic $0$, and $I$ be an ideal of $A$,
\[
 HN_{n}(A,I)=HC_{n-1}(A,I).
\]
\[
 HN_{n}^{(i)}(A,I)=HC_{n-1}^{(i-1)}(A,I).
\]

\begin{corollary}
\begin{equation}
\begin{cases}
 \begin{CD}
 HN_{n}^{(i)}(R[\varepsilon],\varepsilon)= \Omega^{{2i-n-1}}_{R/ \mathbb{Q}}, for \ \frac{n}{2} < i \leq n.\\
 HN_{n}^{(i)}(R[\varepsilon],\varepsilon)= 0, else.
 \end{CD}
\end{cases}
\end{equation} 
 
\end{corollary}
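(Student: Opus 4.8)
The plan is to deduce this corollary directly from the two facts cited immediately before it, namely the identity $HN_n^{(i)}(A,I)=HC_{n-1}^{(i-1)}(A,I)$ (a standard consequence of the $S$-map vanishing, or equivalently of the degeneration of the $HN \to HC$ comparison in the relative setting) together with the explicit computation of $HC_n^{(i)}(R[\varepsilon],\varepsilon)$ in Theorem 4.8. So the proof is essentially a change of variables.

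First I would write $HN_n^{(i)}(R[\varepsilon],\varepsilon) = HC_{n-1}^{(i-1)}(R[\varepsilon],\varepsilon)$. Then I would apply Theorem 4.8 with $n$ replaced by $n-1$ and $i$ replaced by $i-1$: that theorem says $HC_{n-1}^{(i-1)}(R[\varepsilon],\varepsilon) = \Omega^{2(i-1)-(n-1)}_{R/\mathbb{Q}} = \Omega^{2i-n-1}_{R/\mathbb{Q}}$ precisely when $\frac{n-1}{2} \le i-1 \le n-1$, and vanishes otherwise. The final step is to rewrite the range: $\frac{n-1}{2} \le i-1$ is equivalent to $i \ge \frac{n+1}{2}$, i.e. (for integer $i$) to $\frac{n}{2} < i$, and $i - 1 \le n-1$ is equivalent to $i \le n$. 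This reproduces the stated range $\frac{n}{2} < i \le n$ and the exponent $2i-n-1$, completing the argument.

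The only point requiring a word of care is the boundary of the range, and this is where I would be most careful rather than where I expect genuine difficulty: one must check that the translated inequality $\frac{n-1}{2}\le i-1$ really does match $\frac n2 < i$ for all integer $n$ (both parities), so that no spurious summand appears or disappears at the endpoint, and that the degenerate case $2i-n-1 = 0$ (giving $\Omega^0_{R/\mathbb{Q}} = R$) is consistent with Theorem 4.8's Step 3 for $HC$. Since all of this is bookkeeping on top of already-established results, there is no substantive obstacle; the corollary is, as the text says, ``obvious'' once Theorem 4.8 and the relation $HN_n^{(i)}(A,I)=HC_{n-1}^{(i-1)}(A,I)$ are in hand.
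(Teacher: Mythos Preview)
Your proposal is correct and follows exactly the paper's approach: the paper states the corollary as ``obvious'' from the identity $HN_n^{(i)}(A,I)=HC_{n-1}^{(i-1)}(A,I)$ combined with the preceding computation of $HC_n^{(i)}(R[\varepsilon],\varepsilon)$, and you have simply carried out the index shift explicitly (note, however, that the $HC$ computation you invoke is Theorem~4.5 in the paper's numbering, not 4.8).
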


\begin{corollary}
\[
 HN_{n}(R[\varepsilon],\varepsilon) = \Omega^{{n-1}}_{R/ \mathbb{Q}}\oplus \Omega^{{n-3}}_{R/ \mathbb{Q}} \oplus \dots
\]
the last term is $\Omega^{{1}}_{R/ \mathbb{Q}}$ or $R$, depending on $n$ even or odd.
\end{corollary}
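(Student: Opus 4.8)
The plan is to deduce this corollary directly from Theorem 4.9 (the computation of $HC_n(R[\varepsilon],\varepsilon)$) together with the degree-shift relation between negative cyclic homology and cyclic homology in the relative setting. Concretely, I would invoke the identity $HN_n(A,I) = HC_{n-1}(A,I)$ recorded just above the statement, applied to $A = R[\varepsilon]$ and $I = \varepsilon R$ (both hypotheses hold: $R[\varepsilon]$ is a commutative $\mathbb{Q}$-algebra and $\varepsilon R$ is an ideal). This immediately gives an isomorphism $HN_n(R[\varepsilon],\varepsilon) \cong HC_{n-1}(R[\varepsilon],\varepsilon)$.

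Next I would substitute $n-1$ for $n$ in the conclusion of Theorem 4.9. That theorem asserts $HC_{m}(R[\varepsilon],\varepsilon) = \Omega^{m}_{R/\mathbb{Q}} \oplus \Omega^{m-2}_{R/\mathbb{Q}} \oplus \dots$, the sum terminating at $\Omega^{1}_{R/\mathbb{Q}}$ when $m$ is odd and at $R = \Omega^{0}_{R/\mathbb{Q}}$ when $m$ is even. Taking $m = n-1$ yields $HN_n(R[\varepsilon],\varepsilon) = \Omega^{n-1}_{R/\mathbb{Q}} \oplus \Omega^{n-3}_{R/\mathbb{Q}} \oplus \dots$, terminating at $\Omega^{1}_{R/\mathbb{Q}}$ when $n-1$ is odd (i.e. $n$ even) and at $R$ when $n-1$ is even (i.e. $n$ odd). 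This is precisely the claimed formula.

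As a consistency check, and as an alternative route, one can instead assemble the statement from the graded pieces furnished by Corollary 4.10: summing $HN_n^{(i)}(R[\varepsilon],\varepsilon) = \Omega^{2i-n-1}_{R/\mathbb{Q}}$ over the range $\tfrac{n}{2} < i \le n$ and observing that $2i - n - 1$ then runs through the values $n-1, n-3, \dots$, bottoming out at $1$ when $n$ is even and at $0$ (giving the summand $R$) when $n$ is odd. Either way, the argument uses only facts already in hand, so I do not anticipate a genuine obstacle; the one subtlety worth flagging is the vanishing of the relative $S$-map (Geller--Weibel), which is what makes Theorem 4.9 — and therefore this corollary — come out so cleanly, but that input has already been absorbed upstream.
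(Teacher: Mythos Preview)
Your proposal is correct and follows essentially the same approach as the paper: the corollary is stated there as an immediate consequence of the identity $HN_{n}(A,I)=HC_{n-1}(A,I)$ applied to the computation of $HC_{n-1}(R[\varepsilon],\varepsilon)$, which is exactly what you do. Your alternative consistency check via summing the graded pieces is a nice sanity check but is not needed for the argument.
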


We can also generalize the above results to the sheaf level.
\begin{theorem}
Let $X$ be a regular scheme over a field $k$, $chark=0$. we have the following

\begin{equation}
\begin{cases}
 \begin{CD}
 HN_{n}^{(i)}(O_{X}[\varepsilon],\varepsilon)= \Omega^{{2i-n-1}}_{O_{X}/ \mathbb{Q}}, for \  \frac{n}{2} < i \leq n.\\
 HN_{n}^{(i)}(O_{X}[\varepsilon],\varepsilon)= 0, else.
 \end{CD}
\end{cases}
\end{equation} 

It follows that 
\[
 HN_{n}(O_{X}[\varepsilon],\varepsilon) = \Omega^{{n-1}}_{O_{X}/ \mathbb{Q}}\oplus \Omega^{{n-3}}_{O_{X}/ \mathbb{Q}} \oplus \dots
\]
the last term is $\Omega^{{1}}_{O_{X}/ \mathbb{Q}}$ or $O_{X}$, depending on $n$ even or odd.
\end{theorem}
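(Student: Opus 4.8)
The plan is to obtain the global statement from the affine computations of this subsection by a routine sheafification, the only real content being that the comparison isomorphisms are natural in the ring so that they glue. First I would note that on affine opens the sheaf-theoretic statement reduces to the affine one: since $\operatorname{char}k = 0$ we have $\mathbb{Q}\subseteq k$, so every affine open $\operatorname{Spec}(R)\subseteq X$ has $R$ a regular Noetherian $\mathbb{Q}$-algebra, and because $X$ is regular its connected components are integral (regular local rings are domains), so $X$ admits a basis of affine opens $\operatorname{Spec}(R)$ with $R$ a regular Noetherian domain over $\mathbb{Q}$ --- precisely the hypothesis of the affine results above. Over each such open the sections of the presheaf $U\mapsto HN_n^{(i)}(O_X(U)[\varepsilon],\varepsilon)$ are then given by the affine formula $\Omega^{2i-n-1}_{R/\mathbb{Q}}$ for $\frac{n}{2}<i\leq n$, and $0$ otherwise.

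Next I would verify that passing to the associated Zariski sheaf loses nothing. Hochschild and cyclic homology, together with their weight decompositions $HH_\ast^{(i)}$ and $HC_\ast^{(i)}$ --- induced by the idempotents of $\mathbb{Q}[\Sigma_n]$ acting functorially on the standard bicomplex --- commute with localization of $\mathbb{Q}$-algebras, and likewise their relative versions for the ideal $(\varepsilon)$; this is classical. Negative cyclic homology is a homotopy limit and so a priori might not commute with localization, but here that worry is dissolved by the identifications $HN_n(A,\varepsilon)=HC_{n-1}(A,\varepsilon)$ and $HN_n^{(i)}(A,\varepsilon)=HC_{n-1}^{(i-1)}(A,\varepsilon)$ recorded just before the theorem, which reduce everything to cyclic homology. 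Since moreover the coherent sheaf $\Omega^j_{O_X/\mathbb{Q}}$ is the Zariski sheafification of $U\mapsto\Omega^j_{O_X(U)/\mathbb{Q}}$ (Kähler differentials commute with localization), and since the affine isomorphism $HN_n^{(i)}(R[\varepsilon],\varepsilon)\cong\Omega^{2i-n-1}_{R/\mathbb{Q}}$ is assembled entirely from natural maps --- the SBI sequence, the K\"unneth isomorphism for $R[\varepsilon]=R\otimes_{\mathbb{Q}}\mathbb{Q}[\varepsilon]$, the HKR isomorphism $HH_n^{(n)}(R)\cong\Omega^n_{R/\mathbb{Q}}$, and the inductive step --- these isomorphisms assemble over the affine basis into an isomorphism of sheaves $HN_n^{(i)}(O_X[\varepsilon],\varepsilon)\cong\Omega^{2i-n-1}_{O_X/\mathbb{Q}}$ in the stated range, both sides vanishing outside it. Summing over $i$ gives the displayed decomposition of $HN_n(O_X[\varepsilon],\varepsilon)$, with last summand $\Omega^1_{O_X/\mathbb{Q}}$ or $O_X$ according to the parity of $n$, exactly as in the affine corollary.

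The step I expect to require the most care is not any deep input but the bookkeeping of naturality: one must make sure that the weight decomposition and each identification in the affine proof are natural transformations of functors on regular Noetherian $\mathbb{Q}$-algebras (for the localization maps alone it suffices), so that they descend to morphisms of presheaves and then of sheaves. Everything in sight --- the symmetric-group action, K\"unneth along a flat base change, HKR --- is manifestly functorial, so this is mechanical; the only pitfall to watch is the naturality of the splitting of the short exact sequence $0\to HC^{(i-1)}_{n-1}(R[\varepsilon],\varepsilon)\to HH^{(i)}_n(R[\varepsilon],\varepsilon)\to HC^{(i)}_n(R[\varepsilon],\varepsilon)\to 0$ used in the induction, which follows from the canonicity of the K\"unneth decomposition by weight. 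Alternatively one could argue entirely stalkwise at each regular local ring $O_{X,x}$, but the presheaf argument is cleaner.
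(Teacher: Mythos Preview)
Your proposal is correct and follows exactly the approach the paper intends: the paper states this theorem immediately after the affine corollaries with the sentence ``We can also generalize the above results to the sheaf level'' and gives no proof, treating the passage from rings to sheaves as routine. You have simply supplied the details --- naturality of the weight decomposition, the SBI sequence, K\"unneth, and HKR, together with the reduction of relative $HN$ to relative $HC$ --- that the paper leaves implicit.
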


\subsection{Goodwillie and Cathelineau isomorphism}
\label{Goodwillie and Cathelineau isomorphism}
In this subsection, we will show Goodwillie and Cathelineau type results for non-connective K-groups.
We begin with recalling Adams' operations on K-groups.

\textbf{Adams' operations on K-groups}.
In [29], Soul\'e showed that there exists Adams operations $\psi^{k}$ acting on K-groups with supports $K_{n}( X \ on \ Y)$, $n\geq 0$. He considered K-theory as a generalized cohomology theory:
\[
 K= \mathbb{Z} \times BGL^{+}.
\]

Hence, we have
\[
 K(X) = \mathbb{H}(X,\mathbb{Z} \times BGL^{+})
\]
and
\[
 K(X \ on \ Y) = \mathbb{H}_{Y}(X,\mathbb{Z} \times BGL^{+}).
\]
where $Y$ is closed in $X$ and and $K(X \ on \ Y)$, K-theory of $X$ with support in $Y$, is defined as the homotopy fibre of 
\[
 BQP(X) \rightarrow BQP(X \ - \ Y)
\]
here $P(X)$ is the category of locally free sheaves of finite rank on $X$ and $Q$ stands for Quillen's Q-construction.

Now, we let $R_{\mathbb{Z}}(GL_{N})$ be the Grothendieck group of representations of the general linear group scheme of $GL_{N}$. Then it is well known that $R_{\mathbb{Z}}(GL_{N})$
 has a $\lambda$-ring structure. And moreover, an element of $R_{\mathbb{Z}}(GL_{N})$ induces a morphism
\[
 \mathbb{Z} \times BGL_{N}^{+} \to \mathbb{Z} \times BGL^{+}.
\]

In other word, there is a morphism between abelian groups:
\[
 R_{\mathbb{Z}}(GL_{N}) \rightarrow [\mathbb{Z} \times BGL_{N}^{+}, \mathbb{Z} \times BGL^{+}].
\]
Passing to limit, we have
\[
 R_{\mathbb{Z}}(GL) \rightarrow [\mathbb{Z} \times BGL^{+}, \mathbb{Z} \times BGL^{+}].
\]
Furthermore, we have the following morphism by taking hypercohomology:
\[
 R_{\mathbb{Z}}(GL) \rightarrow [\mathbb{H}_{Y}(X,\mathbb{Z} \times BGL^{+}), \mathbb{H}_{Y}(X,\mathbb{Z} \times BGL^{+})].
\]
And finally we arrive at group level:
\[
 R_{\mathbb{Z}}(GL) \rightarrow [K_{m}(X \ on \ Y), K_{m}(X \ on \ Y)].
\]

In other word, the $\lambda$-operations on $K_{m}(X \ on \ Y)$ are induced from the $\lambda$-operations of $R_{\mathbb{Z}}(GL_{N})$. In fact, this is exact the point to prove 
$K_{m}(X \ on \ Y)$ carries a $\lambda$-ring structure.

Since the appearance of the non-zero negative non-connective K-groups in our study, we need to extend the above Adams operations  $\psi^{k}$ to negative range.
This can be done by descending induction, according to Weibel[34]. 

For every integer $n \in \mathbb{Z}$,  we have the following Bass fundamental exact sequence.
\begin{align*}
 0 \to & K_{n}(X \ on \ Y) \to K_{n}(X[t] \ on \ Y[t]) \oplus K_{n}(X[t^{-1}] \ on \ Y[t^{-1}])  \\
 & \to K_{n}(X[t,t^{-1}] \ on \ Y[t,t^{-1}]) \to  K_{n-1}(X \ on \ Y)  \to 0.
\end{align*}

For any $x \in K_{-1}(X \ on \ Y)$, we have $x\cdot t \in K_{0}(X[t,t^{-1}] \ on \ Y[t,t^{-1}])$, where 
$t \in K_{1}(k[t,t^{-1}])$. We have
\[
 \psi^{k}(x\cdot t ) = \psi^{k}(x)\psi^{k}(t)= \psi^{k}(x) k\cdot t.
\]

Tensoring with $\mathbb{Q}$, we have obtained Adams operations $\psi^{k}$ on $K_{-1}(X \ on \ Y)$:
\[
 \psi^{k}(x)= \dfrac{\psi^{k}(x\cdot t )}{k\cdot t}.
\]

Continuing this procedure, we obtain Adams operations on all the negative K-groups.

\textbf{Goodwillie and Cathelineau isomorphism}.
Now, we show Goodwillie and Cathelineau-type results for non-connective K-groups. Let's recall that in [13] Goodwillie shows the relative 
Chern character is an isomorphism between the relative K-group $K_{n}(A,I)$ and negative cyclic homology $HN_{n}(A,I)$, where $A$ is a commutative $\mathbb{Q}$-algebra and $I$
is a nilpotent ideal in $A$. 
\begin{theorem}[13]
Let $I$ be a nilpotent ideal in a commutative $\mathbb{Q}$-algebra $A$, the relative Chern character 
\[
  Ch: K_{n}(A,I) \to HN_{n}(A,I)
\]
is an isomorphism.
\end{theorem}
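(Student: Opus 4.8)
The plan is to follow Goodwillie's original argument [13] in three stages: construct the relative Chern character, reduce by d\'evissage to a square-zero ideal, and then settle that case by identifying relative $K$-theory rationally with relative cyclic homology. For the first stage, recall that the Goodwillie--Jones negative cyclic Chern character is a natural transformation $Ch\colon K(-)\to HN(-)$ from (simplicial) $\mathbb{Q}$-algebras to spectra, compatible with the surjection $A\twoheadrightarrow A/I$; taking homotopy fibres produces the relative map $Ch\colon K(A,I)\to HN(A,I)$, and everything afterwards is devoted to proving this is an equivalence.

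Second, I would reduce to the case $I^{2}=0$ by descending induction on the nilpotency order. If $I^{N}=0$ with $N\ge 3$, put $\bar A=A/I^{N-1}$ and $\bar I=I/I^{N-1}$; then $\bar I^{\,N-1}=0$, while $I^{N-1}\subseteq A$ is square-zero since $(I^{N-1})^{2}=I^{2N-2}=0$. The chain $A\to\bar A\to A/I$ and the octahedral axiom give a homotopy fibre sequence of relative terms $K(A,I^{N-1})\to K(A,I)\to K(\bar A,\bar I)$, and the identical sequence for $HN$ (which is a homotopy fibre by definition). Since $Ch$ is compatible with all three and the two outer cases have strictly smaller nilpotency order (hence are known by induction, with the square-zero base case still to be done), the five lemma gives the middle one. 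A routine filtered-colimit argument, using that both $K(-,-)$ and $HN(-,-)$ commute with filtered colimits, moreover lets one assume $A$ is of finite type over $\mathbb{Q}$ and $I$ finitely generated, if convenient.

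Third --- the heart --- I would treat a square-zero extension $0\to M\to A\to R\to 0$, with $M=I$ viewed as an $R$-module. Here relative $K$-theory is, after the plus construction, the homotopy fibre of $BGL(A)^{+}\to BGL(R)^{+}$, and the relative general linear group $\ker\bigl(GL_{\infty}(A)\to GL_{\infty}(R)\bigr)=1+M_{\infty}(M)$ is \emph{abelian}; so rationally relative $K$-theory ``linearizes'' in $M$ and is computed, via the Volodin model and the relative form of the Loday--Quillen--Tsygan theorem, by relative cyclic homology $HC_{\ast-1}(A,M)$. On the cyclic side, $HN_{n}(A,M)=HC_{n-1}(A,M)$ by the identity recalled just above the statement, so both sides equal $HC_{\ast-1}(A,M)$, and it remains only to check that $Ch$ realizes this common identification. (In modern terms this is the assertion that the Goodwillie derivative of $K$ is $THH$ and that, rationally and in the nilpotent-relative situation, the relevant assembly collapses onto negative cyclic homology --- an alternative route to the same conclusion.)

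The step I expect to be the main obstacle is precisely this last one. The d\'evissage and continuity reductions are formal, resting only on the fact that $K(-,-)$ and $HN(-,-)$ send the relevant short exact sequences of rings to fibre sequences. But controlling the relative $K$-theory of a square-zero extension --- proving the rational linearization and matching it with $Ch$ --- is the genuine analytic input: it requires delicate stable-range estimates for $GL$ over nilpotent extensions (the Volodin/plus-construction comparison) together with the relative Loday--Quillen--Tsygan computation. Once that is granted, the theorem follows.
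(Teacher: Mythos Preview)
The paper does not give its own proof of this statement: Theorem~4.10 is quoted verbatim from Goodwillie's paper~[13] and used as a black box, so there is no argument in the paper to compare against. Your sketch is a reasonable outline of Goodwillie's original strategy --- d\'evissage to the square-zero case followed by the rational linearization of relative $K$-theory via the Volodin model and the Loday--Quillen--Tsygan identification --- and you correctly flag that the substantive work lies entirely in that last step; but since the present paper simply cites the result rather than reproving it, your proposal neither agrees with nor diverges from anything here.
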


This result is further generalized by Cathelineau in [8]
\begin{theorem}[8]
The Goodwillie's isomorphism
\[
  K_{n}(A,I) = HN_{n}(A,I)
\]
respects Adams operation. That is,
\[
 K_{n}^{(i)}(A,I) = HN_{n}^{(i)}(A,I),
\]
here $K_{n}^{(i)}$ and $HN_{n}^{(i)}$ are eigen-spaces of $\psi^{k}=k^{i}$ and $\psi^{k}=k^{i+1}$ respectively.
\end{theorem}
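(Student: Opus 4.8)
The plan is to treat the non-negative degrees by quoting Cathelineau and then to carry the statement down to negative degrees by Bass delooping, paralleling the construction of $\psi^{k}$ on negative $K$-groups recalled above. For $n\geq 0$ I would only recall the mechanism of Cathelineau's proof: both $K_{n}(A,I)$ and $HN_{n}(A,I)$ carry $\lambda$-operations — on the $K$-side induced by exterior powers (equivalently by the $\lambda$-ring structure on $R_{\mathbb{Z}}(GL)$ used above), on the $HN$-side induced by the Loday $\lambda$-decomposition of the $(b,B)$-bicomplex via the Eulerian idempotents — and Goodwillie's relative Chern character $Ch$ intertwines them. The degree shift ``$\psi^{k}=k^{i}$ on $K_{n}^{(i)}$ versus $\psi^{k}=k^{i+1}$ on $HN_{n}^{(i)}$'' is forced by Connes' operator $B$, which raises homological degree by one; one pins it down by evaluating on the canonical generators of the relative $K$-theory of a split square-zero extension $A\oplus I$, where both sides and the map $Ch$ are completely explicit, and then passes to an arbitrary nilpotent ideal using nilpotent invariance and pro-continuity of both functors.

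For $n<0$ I would run a descending induction on $n$. Relative $K$-theory of the nilpotent ideal $I$ sits in the Bass fundamental exact sequence displayed above (with $Y=\emptyset$), and relative negative cyclic homology does too — using $HN_{n}(A,I)=HC_{n-1}(A,I)$ together with the fundamental theorem for cyclic homology. Goodwillie's $Ch$ is natural in $A$, hence fits into a commutative ladder between these two four-term sequences for the rings $A$, $A[t]$, $A[t^{-1}]$, $A[t,t^{-1}]$. By the inductive hypothesis $Ch$ is an Adams-equivariant isomorphism in degree $n$ for each of $A[t]$, $A[t^{-1}]$, $A[t,t^{-1}]$ (note that $I[t]$, $I[t^{-1}]$, $I[t,t^{-1}]$ are still nilpotent), so a five-lemma-style chase produces an Adams-equivariant isomorphism in degree $n-1$ for $A$. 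The weight shift survives the induction because the connecting map of the fundamental sequence is multiplication by the class $t$, which has Adams weight $1$ and homological degree $1$ on both sides — $t\in K_{1}(k[t,t^{-1}])$ on the $K$-side and $Ch(t)$ generating $HN_{1}^{(1)}$ on the $HN$-side — so it shifts homological degree and Adams weight each by exactly one, consistently on the two sides.

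The delicate point, and the one I expect to be the real obstacle, is the bookkeeping in this last step: one must verify that the $\lambda$-decomposition of relative cyclic homology is compatible with the transfer maps $-\otimes_{k}k[t^{\pm 1}]$ that build the fundamental sequence, so that the ladder is a ladder of Adams-graded complexes and not merely of abelian groups, and that Goodwillie's isomorphism commutes with all four maps in the sequence rather than only with the boundary map. Granting these compatibilities — which follow from the multiplicativity of $Ch$ and the Weibel--Goodwillie fundamental theorem for cyclic homology — the induction closes and yields $K_{n}^{(i)}(A,I)=HN_{n}^{(i)}(A,I)$ for all $n\in\mathbb{Z}$.
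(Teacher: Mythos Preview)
The paper does not prove this theorem at all: it is simply quoted from Cathelineau's paper~[8], stated with the bracketed citation and no proof environment following it. There is therefore no ``paper's own proof'' for you to be compared against; your proposal is an attempt to supply an argument where the paper is content to cite the literature.

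That said, two remarks on the content of your proposal. First, for $n\geq 0$ your sketch of Cathelineau's mechanism (intertwining the $\lambda$-operations via the relative Chern character, with the degree shift forced by Connes' $B$) is an accurate summary of what is in~[8], so that part is fine as a recollection. Second, your extension to $n<0$ via Bass delooping is \emph{not} part of the statement you were asked to prove: the theorem as recorded here is Cathelineau's original result, and the passage to negative degrees and to supports is carried out later in the paper (Theorems~4.12--4.16) by a different route. There the author follows Corti\~nas--Haesemeyer--Weibel~[10], lifting Goodwillie--Cathelineau to a homotopy equivalence of presheaves of spectra $\mathcal{K}^{(i)}(O,I)\simeq\mathcal{HN}^{(i)}(O,I)$ and then applying Thomason hypercohomology with supports $\mathbb{H}_{Y}(X,-)$; the Adams decomposition survives because it is already present at the spectrum level and $\mathbb{H}_{Y}(X,-)$ preserves fibrations. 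Your Bass-ladder approach is a plausible alternative, but the compatibilities you flag as ``delicate'' (that the $\lambda$-decomposition of $HC$ commutes with the polynomial/Laurent transfer maps, and that $Ch$ is a map of Adams-graded ladders) are genuine verifications you have not carried out, whereas the spectrum-level argument of~[10] sidesteps them entirely.
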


In [10, appendix B], Corti$\tilde{n}$as-Haesemeyer-Weibel show a space level version of Goodwillie's theorems. For every  nilpotent sheaf of ideal $I$, they define $K(O,I)$ and $HN(O,I)$ as the following presheaves respectively:
\[
 U \rightarrow K(O(U),I(U))
\]
and
\[
 U \rightarrow HN(O(U),I(U)).
\]

They write $\mathcal{K}(O,I)$ and $\mathcal{HN}(O,I)$ for the presheaves of spectrum whose initial spaces are $K(O,I)$ and $HN(O,I)$ respectively. 
Moreover, one defines $\mathcal{K}^{(i)}(O,I)$ as the homotopy fiber of $\mathcal{K}(O,I)$ on which $\psi^{k}-k^{i}$ acts acyclicly. And we define $\mathcal{HN}^{(i)}(O,I)$ similarly. 
Goodwillie's theorem and Cathelineau's isomorphism can be generalized in the following way.
\begin{theorem}[10]
 The relative Chern character induces homotopy equivalence of spectra:
\[
 Ch: \mathcal{K}(O,I) \simeq \mathcal{HN}(O,I)
\]
and 
\[
 Ch: \mathcal{K}^{(i)}(O,I) \simeq \mathcal{HN}^{(i)}(O,I).
\]
\end{theorem}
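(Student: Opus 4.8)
The plan is to upgrade Goodwillie's theorem [13] from a single $\mathbb{Q}$-algebra to presheaves of spectra and then sheafify. First I would fix a commutative $\mathbb{Q}$-algebra $A$ with a nilpotent ideal $I \subset A$. Goodwillie's theorem says the relative Chern character $Ch : K_{n}(A,I) \to HN_{n}(A,I)$ is an isomorphism for all $n \geq 0$; extending $Ch$ to negative degrees via the Bass fundamental sequences recalled above (and using that there $HN_{n}(A,I) = HC_{n-1}(A,I)$, so no new phenomena occur below degree $0$), one obtains that the map of spectra $Ch : \mathcal{K}(A,I) \to \mathcal{HN}(A,I)$ induces an isomorphism on all homotopy groups, hence is a weak equivalence of spectra.

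Next I would sheafify. Since $Ch$ is natural in $A$, it defines a morphism of presheaves of spectra $Ch : \mathcal{K}(O,I) \to \mathcal{HN}(O,I)$ on the small Zariski site of the scheme, $I$ being the given nilpotent sheaf of ideals. On every affine open $U = Spec(A)$ the ideal $I(U) \subset A$ is nilpotent, so by the previous step $Ch$ is a weak equivalence of spectra over $U$. Thus $Ch$ is an objectwise --- hence local --- weak equivalence of presheaves of spectra, and passing to the sheaf-theoretic (fibrant) replacements yields the first equivalence $Ch : \mathcal{K}(O,I) \simeq \mathcal{HN}(O,I)$.

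For the eigenspace refinement I would invoke Cathelineau's theorem [8] together with the compatibility of the Chern character with Adams operations. The operations $\psi^{k}$ act on $\mathcal{K}(O,I)$ --- in negative degrees through the descending induction along the Bass sequences recalled above --- and on $\mathcal{HN}(O,I)$ through the $\lambda$-decomposition of the cyclic complexes, and the relative Chern character intertwines them, matching the summand $\mathcal{K}^{(i)}(O,I)$ with $\mathcal{HN}^{(i)}(O,I)$ in the sense of [8]. Defining $\mathcal{K}^{(i)}(O,I)$ as the homotopy fibre of $\mathcal{K}(O,I)$ on which $\psi^{k} - k^{i}$ acts acyclically, and $\mathcal{HN}^{(i)}(O,I)$ analogously, the equivalence $Ch$ commutes with $\psi^{k}$ and so restricts to an equivalence $Ch : \mathcal{K}^{(i)}(O,I) \simeq \mathcal{HN}^{(i)}(O,I)$; on homotopy groups this recovers $K_{n}^{(i)}(A,I) = HN_{n}^{(i)}(A,I)$ objectwise.

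The hard part will not be Goodwillie's theorem itself but the bookkeeping needed to make the Adams operations and the Chern character act compatibly \emph{at the level of presheaves of spectra}, so that the homotopy-fibre description of $\mathcal{K}^{(i)}(O,I)$ is well-posed: one needs that the $\gamma$-filtration on the relative K-theory of a nilpotent extension is finite, so that rationally $\mathcal{K}(O,I)$ splits as the product of its eigenspaces $\mathcal{K}^{(i)}(O,I)$, and that the relative Chern character respects this structure. This is exactly the content of [10, Appendix B], whose argument I would follow.
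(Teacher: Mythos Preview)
The paper does not actually prove this statement: it is quoted verbatim from Corti\~nas--Haesemeyer--Weibel [10] and used as a black box. Your proposal is a faithful sketch of the argument in [10, Appendix~B]---Goodwillie's theorem applied sectionwise to get an objectwise equivalence of presheaves of spectra, then Cathelineau's compatibility of $Ch$ with Adams operations to split off the eigen-pieces---and you say as much in your final paragraph. So there is nothing to compare: your approach and the paper's ``proof'' (namely, cite [10]) coincide.

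One small caution on your write-up: the step ``extend $Ch$ to negative degrees via the Bass fundamental sequences'' is not quite how [10] proceeds, and is a bit delicate in any case because the Bass construction requires knowing the relevant theory is a contracted functor. In the present relative, nilpotent, rational setting this is harmless---indeed $K_n(A,I)\cong HC_{n-1}(A,I)$ already handles all $n$---but it would be cleaner to say that Goodwillie's map is a map of (non-connective) spectra from the outset, as in [10], rather than to build the negative part by hand.
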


Now, let $X$ be a scheme essenially finite type over a field $k$, where $Char k=0$. Let $Y$ be a closed subset in a scheme $X$ and $U = X - Y$. 

Let $\mathbb{H}(X,\bullet)$ denote Thomason's hypercohomology of spectra. We have the following 
Nine-diagrams(each column and row are homotopy fibration):

\[
  \begin{CD}
     \mathbb{H}_{Y}(X, \mathcal{K}(O, \varepsilon)) @>>> \mathbb{H}(X, \mathcal{K}(O, \varepsilon)) @>>>  \mathbb{H}(U, \mathcal{K}(O, \varepsilon)) \\
     @VVV  @VVV   @VVV  \\
     \mathbb{H}_{Y}(X, \mathcal{K}(O_{X}[\varepsilon])) @>>> \mathbb{H}(X, \mathcal{K}(O_{X}[\varepsilon])) @>>>  \mathbb{H}(U, \mathcal{K}(O_{U}[\varepsilon])) \\
     @VVV  @VVV   @VVV  \\
     \mathbb{H}_{Y}(X, \mathcal{K}(O_{X})) @>>> \mathbb{H}(X, \mathcal{K}(O_{X})) @>>>  \mathbb{H}(U, \mathcal{K}(O_{U})) \\
  \end{CD}
\]
and

\[
  \begin{CD}
     \mathbb{H}_{Y}(X, \mathcal{HN}(O, \varepsilon)) @>>> \mathbb{H}(X, \mathcal{HN}(O, \varepsilon)) @>>>  \mathbb{H}(U, \mathcal{HN}(O, \varepsilon)) \\
     @VVV  @VVV   @VVV  \\
     \mathbb{H}_{Y}(X, \mathcal{HN}(O_{X}[\varepsilon])) @>>> \mathbb{H}(X, \mathcal{HN}(O_{X}[\varepsilon])) @>>>  \mathbb{H}(U, \mathcal{HN}(O_{U}[\varepsilon])) \\
     @VVV  @VVV   @VVV  \\
     \mathbb{H}_{Y}(X, \mathcal{HN}(O_{X})) @>>> \mathbb{H}(X, \mathcal{HN}(O_{X})) @>>>  \mathbb{H}(U, \mathcal{HN}(O_{U})) \\
  \end{CD}
\]

The above diagrams result in the following result
\begin{theorem} 
$\mathbb{H}_{Y}(X, \mathcal{K}(O, \varepsilon))$ is the homotpy fibre of 
\[
 \mathbb{H}_{Y}(X, \mathcal{K}(O_{X}[\varepsilon])) \rightarrow \mathbb{H}_{Y}(X, \mathcal{K}(O_{X})),
\]
and $\mathbb{H}_{Y}(X, \mathcal{HN}(O, \varepsilon))$ is the homotopy fibre of
\[
 \mathbb{H}_{Y}(X, \mathcal{HN}(O_{X}[\varepsilon])) \rightarrow \mathbb{H}_{Y}(X, \mathcal{HN}(O_{X})).
\]
\end{theorem}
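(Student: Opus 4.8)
The plan is to deduce this statement purely formally from the two Nine-diagrams that precede it, together with the elementary fact that a homotopy fibre can be computed ``columnwise'' or ``rowwise'' in a commutative square of spectra. First I would recall the key principle: if one has a commutative $3\times 3$ diagram of spectra in which every row and every column is a homotopy fibration, then the ``total fibre'' is well-defined up to equivalence, i.e. taking fibres in the horizontal direction first and then in the vertical direction yields the same spectrum as taking fibres vertically first and then horizontally. Equivalently, in any homotopy-cartesian square of spectra the fibres of the two parallel pairs of maps agree. This is the abstract input; it is standard (see e.g.\ Thomason--Trobaugh or any treatment of stable homotopy) and I would simply cite it.

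Next I would set up the relevant square inside the first Nine-diagram. By definition (given in the paragraph above the diagrams), $\mathcal{K}(O,\varepsilon)$ is the presheaf of spectra that is the homotopy fibre of $\mathcal{K}(O_X[\varepsilon]) \to \mathcal{K}(O_X)$. Since Thomason's hypercohomology with supports $\mathbb{H}_Y(X,-)$ is itself a (homotopy-)limit construction, it preserves homotopy fibrations of presheaves of spectra; hence applying $\mathbb{H}_Y(X,-)$ to the fibration sequence $\mathcal{K}(O,\varepsilon) \to \mathcal{K}(O_X[\varepsilon]) \to \mathcal{K}(O_X)$ gives exactly that $\mathbb{H}_Y(X,\mathcal{K}(O,\varepsilon))$ is the homotopy fibre of $\mathbb{H}_Y(X,\mathcal{K}(O_X[\varepsilon])) \to \mathbb{H}_Y(X,\mathcal{K}(O_X))$, which is the first assertion. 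Alternatively, and this is the route the Nine-diagram is drawn for, the left column of the first diagram reads $\mathbb{H}_Y(X,\mathcal{K}(O,\varepsilon)) \to \mathbb{H}_Y(X,\mathcal{K}(O_X[\varepsilon])) \to \mathbb{H}_Y(X,\mathcal{K}(O_X))$ and is asserted to be a homotopy fibration; so the statement is literally the content of that column. The same argument with $\mathcal{HN}$ in place of $\mathcal{K}$, using the second Nine-diagram (or the definition of $\mathcal{HN}(O,\varepsilon)$ as the fibre of $\mathcal{HN}(O_X[\varepsilon]) \to \mathcal{HN}(O_X)$), gives the second assertion.

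I would then remark that the only thing requiring genuine justification is why the rows and columns of the Nine-diagrams really are homotopy fibrations. The rows are localization sequences for K-theory (resp.\ $HN$) with supports, $\mathbb{H}_Y(X,-) \to \mathbb{H}(X,-) \to \mathbb{H}(U,-)$, which hold because $\mathbb{H}_Y(X,-)$ is defined as the fibre of $\mathbb{H}(X,-)\to\mathbb{H}(U,-)$; the middle and right columns are the fibration sequences defining $\mathcal{K}(O,\varepsilon)$ and $\mathcal{HN}(O,\varepsilon)$ globally over $X$ and over $U$, again by definition; and then the left column being a fibration is forced by the $3\times 3$ lemma for spectra. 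So the proof is essentially a one-line invocation of that lemma applied to each diagram in turn.

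The main obstacle is not mathematical depth but bookkeeping: one must be careful that all the functors in sight ($\mathbb{H}(X,-)$, $\mathbb{H}_Y(X,-)$, passage to eigen-pieces later on) are exact functors of spectra, i.e.\ preserve homotopy fibration sequences, so that the diagrams commute up to coherent homotopy and the $3\times 3$ lemma genuinely applies. Granting the framework already fixed in the excerpt — Thomason's hypercohomology of presheaves of spectra, and the definitions of $\mathcal{K}(O,\varepsilon)$, $\mathcal{HN}(O,\varepsilon)$ as fibres — there is nothing further to check, and the proof reduces to pointing at the left-hand columns of the two displayed Nine-diagrams.
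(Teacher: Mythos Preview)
Your proposal is correct and matches the paper's approach: the paper gives no proof environment at all, simply stating that ``the above diagrams result in the following result,'' i.e.\ reading off the left columns of the two Nine-diagrams. Your write-up supplies the justification (the $3\times3$ lemma for spectra and the fact that $\mathbb{H}_Y(X,-)$ preserves homotopy fibrations) that the paper leaves implicit.
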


Combining Goodwillie's isomorphism(space version) with the above result, we have proved the following theorem, which can be considered as a Goodwillie-type isomorphism for relative 
K-groups with support.
\begin{theorem}
Let $K_{n}(X[\varepsilon] \ on \ Y[\varepsilon], \varepsilon)$ denote the kernel of 
\[
  K_{n}(X[\varepsilon] \ on \ Y[\varepsilon]) \rightarrow K_{n}(X \ on \ Y)
\]
and $HN_{n}(X[\varepsilon] \ on \ Y[\varepsilon], \varepsilon)$ denote the kernel of 
\[
  HN_{n}(X[\varepsilon] \ on \ Y[\varepsilon]) \rightarrow HN_{n}(X \ on \ Y),
\]
we have
\[
 K_{n}(X[\varepsilon] \ on \ Y[\varepsilon], \varepsilon) = HN_{n}(X[\varepsilon] \ on \ Y[\varepsilon], \varepsilon).
\]
\end{theorem}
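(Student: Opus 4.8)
The plan is to deduce Theorem 4.23 directly from the two preceding results, Theorem 4.21 (the nine-diagram consequence identifying $\mathbb{H}_{Y}(X,\mathcal{K}(O,\varepsilon))$ and $\mathbb{H}_{Y}(X,\mathcal{HN}(O,\varepsilon))$ as homotopy fibres with support) together with the space-level Goodwillie isomorphism $\mathrm{Ch}\colon \mathcal{K}(O,I)\simeq \mathcal{HN}(O,I)$ of Theorem 4.20. First I would observe that the relative Chern character is natural, so it commutes with the localization/support constructions: applying $\mathbb{H}_{Y}(X,-)$ to the homotopy equivalence $\mathcal{K}(O,\varepsilon)\simeq \mathcal{HN}(O,\varepsilon)$ of presheaves of spectra yields a homotopy equivalence
\[
 \mathbb{H}_{Y}(X,\mathcal{K}(O,\varepsilon)) \simeq \mathbb{H}_{Y}(X,\mathcal{HN}(O,\varepsilon))
\]
of spectra with support.

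Next I would identify the homotopy groups of these spectra. By Theorem 4.21, $\mathbb{H}_{Y}(X,\mathcal{K}(O,\varepsilon))$ sits in the homotopy fibre sequence
\[
 \mathbb{H}_{Y}(X,\mathcal{K}(O,\varepsilon)) \to \mathbb{H}_{Y}(X,\mathcal{K}(O_{X}[\varepsilon])) \to \mathbb{H}_{Y}(X,\mathcal{K}(O_{X})),
\]
and taking homotopy groups (using that $\pi_{n}\mathbb{H}_{Y}(X,\mathcal{K}(O_{X}))=K_{n}(X\text{ on }Y)$, and similarly with $O_{X}[\varepsilon]$) gives a long exact sequence showing $\pi_{n}\mathbb{H}_{Y}(X,\mathcal{K}(O,\varepsilon))$ is exactly the relative group $K_{n}(X[\varepsilon]\text{ on }Y[\varepsilon],\varepsilon)$ defined as the kernel of $K_{n}(X[\varepsilon]\text{ on }Y[\varepsilon])\to K_{n}(X\text{ on }Y)$ — one needs the surjectivity of $K_{n+1}(X[\varepsilon]\text{ on }Y[\varepsilon])\to K_{n+1}(X\text{ on }Y)$, which holds since the inclusion $X\hookrightarrow X[\varepsilon]$ is split by the augmentation $\varepsilon\mapsto 0$, so the long exact sequence breaks into split short exact sequences and the connecting maps vanish. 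The identical argument with $\mathcal{HN}$ in place of $\mathcal{K}$ identifies $\pi_{n}\mathbb{H}_{Y}(X,\mathcal{HN}(O,\varepsilon))$ with $HN_{n}(X[\varepsilon]\text{ on }Y[\varepsilon],\varepsilon)$.

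Finally, combining these identifications with the Chern character equivalence from the first paragraph yields
\[
 K_{n}(X[\varepsilon]\text{ on }Y[\varepsilon],\varepsilon) = HN_{n}(X[\varepsilon]\text{ on }Y[\varepsilon],\varepsilon),
\]
as claimed. I expect the only genuinely delicate point to be the compatibility and naturality step: one must make sure the Chern character of Theorem 4.20 (defined at the level of presheaves of spectra) really does induce a map of the hypercohomology-with-support spectra that is compatible with the nine-diagram of Theorem 4.21, so that it restricts to the fibre term $\mathbb{H}_{Y}(X,\mathcal{K}(O,\varepsilon))$; this is essentially a functoriality bookkeeping argument using that $\mathbb{H}_{Y}(X,-)$ is built functorially from $\mathbb{H}(X,-)$ and $\mathbb{H}(U,-)$. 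The splitting argument that kills the connecting homomorphisms and the passage from homotopy equivalence of spectra to isomorphism on $\pi_{n}$ are then routine.
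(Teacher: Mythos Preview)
Your proposal is correct and follows essentially the same route as the paper: apply Thomason hypercohomology with support $\mathbb{H}_{Y}(X,-)$ to the space-level Goodwillie equivalence $\mathcal{K}(O,\varepsilon)\simeq \mathcal{HN}(O,\varepsilon)$, then use the nine-diagram consequence to identify the homotopy groups of the fibres with the relative groups in the statement. In fact you supply more detail than the paper does---the paper's argument is the single sentence ``Combining Goodwillie's isomorphism (space version) with the above result, we have proved the following theorem''---and your explicit observation that the augmentation $\varepsilon\mapsto 0$ splits the long exact sequence (so that the relative homotopy groups really are kernels rather than just sitting in a long exact sequence) fills a gap the paper leaves implicit.
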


According to [10, appendix B], there exists  the following two splitting fibrations
\[
 \mathcal{K}^{(i)}(O, \varepsilon) \rightarrow \mathcal{K}(O, \varepsilon) \rightarrow \prod_{j\neq i}\mathcal{K}^{(j)}(O, \varepsilon),
\]
and
\[
 \mathcal{HN}^{(i)}(O, \varepsilon) \rightarrow \mathcal{HN}(O, \varepsilon) \rightarrow \prod_{j\neq i}\mathcal{HN}^{(j)}(O, \varepsilon).
\]

Sine taking $\mathbb{H}_{Y}(X,-)$ perserves homotopy fibrations, there exists the following two splitting fibrations:
 \[
  \mathbb{H}_{Y}(X, \mathcal{K}^{(i)}(O, \varepsilon)) \to \mathbb{H}_{Y}(X, \mathcal{K}(O, \varepsilon))  \xrightarrow{\psi^{k}-k^{i}}    \mathbb{H}_{Y}(X ,\prod_{j\neq i}\mathcal{K}^{(j)}(O, \varepsilon)),
 \]
\[
 \mathbb{H}_{Y}(X, \mathcal{HN}^{(i)}(O, \varepsilon)) \to \mathbb{H}_{Y}(X, \mathcal{HN}(O, \varepsilon))  \xrightarrow{\psi^{k}-k^{i+1}}    \mathbb{H}_{Y}(X,\prod_{j\neq i}\mathcal{HN}^{(j)}(O, \varepsilon )).
\]

Passing to group level, we obtain the following results:
\begin{theorem}
\[
 \mathbb{H}^{-n}_{Y}(X, \mathcal{K}^{(i)}(O, \varepsilon)) = \{x \in \mathbb{H}^{-n}_{Y}(X, \mathcal{K}(O, \varepsilon))| \psi^{k}(x)-k^{i}(x)=0 \}.
\]
\[
 \mathbb{H}^{-n}_{Y}(X, \mathcal{HN}^{(i)}(O, \varepsilon)) = \{x \in \mathbb{H}^{-n}_{Y}(X, \mathcal{HN}(O, \varepsilon))| \psi^{k}(x)-k^{i+1}(x)=0 \}.
\]
\end{theorem}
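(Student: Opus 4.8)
The plan is to obtain the group-level statement by applying homotopy groups to the two splitting fibrations displayed immediately above the statement, and to observe that the splitting forces the associated long exact homotopy sequences to degenerate. Recall first that, by the very definition of Thomason hypercohomology with supports of a presheaf of spectra $\mathcal{F}$, one has $\mathbb{H}^{-n}_{Y}(X,\mathcal{F}) = \pi_{n}\big(\mathbb{H}_{Y}(X,\mathcal{F})\big)$; so it suffices to identify $\pi_{n}$ of the fibre term inside $\pi_{n}$ of the total space.

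First I would set $F = \mathbb{H}_{Y}(X,\mathcal{K}^{(i)}(O,\varepsilon))$, $E = \mathbb{H}_{Y}(X,\mathcal{K}(O,\varepsilon))$ and $B = \mathbb{H}_{Y}(X,\prod_{j\neq i}\mathcal{K}^{(j)}(O,\varepsilon))$, so that the first displayed fibration reads $F \to E \xrightarrow{\psi^{k}-k^{i}} B$. From the lambda decomposition $\mathcal{K}(O,\varepsilon)\simeq\prod_{j}\mathcal{K}^{(j)}(O,\varepsilon)$ of [10, appendix B] — applied after $\otimes\mathbb{Q}$, where $\psi^{k}$ acts on $\mathcal{K}^{(j)}(O,\varepsilon)$ as $k^{j}$ — the factor $\mathcal{K}^{(i)}(O,\varepsilon)$ is a retract with complement $\prod_{j\neq i}\mathcal{K}^{(j)}(O,\varepsilon)$, and since $\mathbb{H}_{Y}(X,-)$ is built as a homotopy limit it preserves both this product and the fibration. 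Thus $E\simeq F\times B$, with $\psi^{k}-k^{i}$ acting as $0$ on $F$ and, componentwise on $B$, as multiplication by $k^{j}-k^{i}$ on the $j$-th summand (each invertible after $\otimes\mathbb{Q}$ because $j\neq i$); hence $\psi^{k}-k^{i}\colon E\to B$ is a split surjection. The long exact homotopy sequence of $F\to E\to B$ therefore collapses into short exact sequences
\[
 0 \to \pi_{n}(F) \to \pi_{n}(E) \xrightarrow{\ \psi^{k}-k^{i}\ } \pi_{n}(B) \to 0,
\]
with all connecting homomorphisms zero. Reading off $\pi_{n}(F)=\ker(\psi^{k}-k^{i})$ on $\pi_{n}(E)$ and translating back into hypercohomology notation gives the first identity; the second follows verbatim from the $\mathcal{HN}$-fibration, with $k^{i}$ replaced by $k^{i+1}$ exactly as prescribed by the definition of $\mathcal{HN}^{(i)}$.

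I expect the one genuinely non-formal point to be the compatibility of the weight splitting with $\mathbb{H}_{Y}(X,-)$: one must know that the weight decomposition of $\mathcal{K}(O,\varepsilon)$ (resp. $\mathcal{HN}(O,\varepsilon)$) is preserved by hypercohomology with supports, so that $E$ really splits as $F\times B$, and that $\psi^{k}-k^{i}$ is invertible on the complementary weights. Both reduce to the finiteness of the weight range in each homological degree for the relative theory attached to the square-zero extension by $\varepsilon$, which is guaranteed by the computation of $HN^{(i)}_{n}(R[\varepsilon],\varepsilon)$ in Section 4.2 together with the space-level Goodwillie-Cathelineau equivalence $\mathcal{K}^{(i)}(O,\varepsilon)\simeq\mathcal{HN}^{(i)}(O,\varepsilon)$. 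Once that bookkeeping is secured, the rest is the formal degeneration of the long exact sequence of a split homotopy fibration.
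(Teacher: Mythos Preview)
Your proposal is correct and follows essentially the same route as the paper: the paper derives the theorem directly from the two splitting fibrations displayed immediately before it, noting that $\mathbb{H}_{Y}(X,-)$ preserves (split) homotopy fibrations and then simply ``passing to group level''; you spell out what that passage means by running the long exact sequence of a split fibration and reading off the kernel. Your closing paragraph on the finiteness of the weight range is more than the paper actually invokes---the paper just imports the splitting from \cite{CHW} (appendix B) and uses that hypercohomology with supports preserves it---so you are supplying extra justification rather than a different argument.
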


We have shown that
\[
 \mathbb{H}^{-n}_{Y}(X, \mathcal{K}(O, \varepsilon)) = K_{n}(X[\varepsilon] \ on \ Y[\varepsilon], \varepsilon)
\]
and
\[
 \mathbb{H}^{-n}_{Y}(X, \mathcal{HN}(O, \varepsilon)) = HN_{n}(X[\varepsilon] \ on \ Y[\varepsilon], \varepsilon).
\]
Therefore, the homotopy equivalences
\[ 
  \mathcal{K}(O, \varepsilon) \simeq \mathcal{HN}(O, \varepsilon)
\]
and
\[
 \mathcal{K}^{(i)}(O, \varepsilon) \simeq\mathcal{HN}^{(i)}(O, \varepsilon),
\]
give us the following  refiner result:

\begin{theorem}
\[
 K_{n}^{(i)}(X[\varepsilon] \ on \ Y[\varepsilon],\varepsilon) = HN_{n}^{(i)}(X[\varepsilon] \ on \ Y[\varepsilon],\varepsilon).
\]
\end{theorem}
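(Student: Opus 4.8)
Looking at the final statement (Theorem 4.24 in context), it asserts the Adams-graded refinement of the Goodwillie-type isomorphism for relative $K$-theory with support:
\[
K_{n}^{(i)}(X[\varepsilon] \ on \ Y[\varepsilon],\varepsilon) = HN_{n}^{(i)}(X[\varepsilon] \ on \ Y[\varepsilon],\varepsilon).
\]

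\textbf{Proof proposal.} The plan is to combine the unrefined Goodwillie-type isomorphism (the previous theorem, $K_{n}(X[\varepsilon] \ on \ Y[\varepsilon],\varepsilon) = HN_{n}(X[\varepsilon] \ on \ Y[\varepsilon],\varepsilon)$) with compatibility of the relative Chern character with Adams operations, pushing everything through the hypercohomology spectral apparatus already set up. First I would recall that we have already identified
\[
\mathbb{H}^{-n}_{Y}(X, \mathcal{K}(O, \varepsilon)) = K_{n}(X[\varepsilon] \ on \ Y[\varepsilon], \varepsilon), \qquad \mathbb{H}^{-n}_{Y}(X, \mathcal{HN}(O, \varepsilon)) = HN_{n}(X[\varepsilon] \ on \ Y[\varepsilon], \varepsilon),
\]
and that by the space-level Cathelineau refinement of Goodwillie (Theorem attributed to Corti\~nas-Haesemeyer-Weibel, [10, appendix B]) there is a homotopy equivalence $\mathcal{K}^{(i)}(O,\varepsilon) \simeq \mathcal{HN}^{(i)}(O,\varepsilon)$ compatible with $\mathcal{K}(O,\varepsilon) \simeq \mathcal{HN}(O,\varepsilon)$. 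Applying $\mathbb{H}_{Y}(X,-)$, which preserves homotopy fibrations, to the splitting fibrations displayed just above the statement yields a homotopy equivalence of spectra $\mathbb{H}_{Y}(X,\mathcal{K}^{(i)}(O,\varepsilon)) \simeq \mathbb{H}_{Y}(X,\mathcal{HN}^{(i)}(O,\varepsilon))$.

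Next I would take homotopy groups: $\pi_{n}$ of the first spectrum is, by the preceding theorem (Theorem on eigenspaces in hypercohomology), exactly the $k^{i}$-eigenspace of $\psi^{k}$ acting on $\mathbb{H}^{-n}_{Y}(X,\mathcal{K}(O,\varepsilon)) = K_{n}(X[\varepsilon] \ on \ Y[\varepsilon],\varepsilon)$, which is by definition $K_{n}^{(i)}(X[\varepsilon] \ on \ Y[\varepsilon],\varepsilon)$; similarly $\pi_{n}$ of the second spectrum is the $k^{i+1}$-eigenspace on $HN_{n}(X[\varepsilon] \ on \ Y[\varepsilon],\varepsilon)$, i.e. $HN_{n}^{(i)}(X[\varepsilon] \ on \ Y[\varepsilon],\varepsilon)$ with the indexing convention recorded in Cathelineau's theorem (eigenvalue $k^{i+1}$ on $HN$ corresponds to eigenvalue $k^{i}$ on $K$). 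Since the homotopy equivalence of the previous paragraph intertwines the two $\psi^{k}$-actions (the Chern character commutes with Adams operations, which is precisely the content of the space-level refinement), the identification on eigenspaces follows. This gives the claimed equality.

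\textbf{Main obstacle.} The delicate point is bookkeeping of the eigenvalue shift: on $K$-theory the relevant operator is $\psi^{k}-k^{i}$ while on $HN$ it is $\psi^{k}-k^{i+1}$, so one must check that the Chern character equivalence carries the $k^{i}$-eigenspace of $\psi^{k}$ on $\mathcal{K}$ precisely onto the $k^{i+1}$-eigenspace on $\mathcal{HN}$ — this is exactly the normalization built into Cathelineau's theorem and into the definition of $\mathcal{K}^{(i)}(O,\varepsilon)$ and $\mathcal{HN}^{(i)}(O,\varepsilon)$ as homotopy fibers of $\psi^{k}-k^{i}$ and $\psi^{k}-k^{i+1}$ respectively. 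The other point requiring care is that $\mathbb{H}_{Y}(X,-)$ genuinely preserves the splitting fibrations and hence the eigenspace decomposition at the level of spectra, so that passing to $\pi_{n}$ commutes with extracting eigenspaces; this is legitimate because $\mathbb{H}_{Y}(X,-)$ is exact (preserves homotopy fibrations), as already used in the preceding theorem. Once these normalizations are pinned down, the argument is a formal diagram chase with no further computation.
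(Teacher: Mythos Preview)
Your proposal is correct and follows essentially the same route as the paper: the paper's argument is precisely to apply $\mathbb{H}_{Y}(X,-)$ to the space-level Cathelineau/Goodwillie equivalence $\mathcal{K}^{(i)}(O,\varepsilon)\simeq\mathcal{HN}^{(i)}(O,\varepsilon)$ from [10], invoke the identification of $\mathbb{H}^{-n}_{Y}(X,\mathcal{K}^{(i)}(O,\varepsilon))$ and $\mathbb{H}^{-n}_{Y}(X,\mathcal{HN}^{(i)}(O,\varepsilon))$ with the respective Adams eigenspaces (the theorem just before), and conclude. Your discussion of the eigenvalue shift $k^{i}$ versus $k^{i+1}$ is a useful clarification that the paper leaves implicit in its indexing conventions.
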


This result enables us to compute the relative  K-groups with support in terms of the relative negative cyclic groups with support. Now, we show an explicit computation on relative negative cyclic groups with support which will be used later. Recall that $X^{(j)}= \{x \in X \mid dim_{Krull}O_{X,x} = j \}$.

\begin{theorem}
Suppose $X$ is a $d$-dimensional regular scheme over a field $k$, where $Char k=0$ and $y \in X^{(j)}$.  For any integer $m$, we have 
\[
 HN_{m}(O_{X,y}[\varepsilon] \ on \ y[\varepsilon],\varepsilon)= H_{y}^{j}(\Omega^{\bullet}_{O_{X,y}/\mathbb{Q}}),
\]
where $\Omega^{\bullet}_{O_{X,y}/\mathbb{Q}}=\Omega^{m+j-1}_{O_{X,y}/\mathbb{Q}}\oplus \Omega^{m+j-3}_{O_{X,y}/\mathbb{Q}}\oplus \dots$
\end{theorem}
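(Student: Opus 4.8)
The plan is to reduce this to the sheaf-level computation of relative negative cyclic homology (the affine version is Corollary~4.8, the scheme version Theorem~4.9) by running the coniveau / local-cohomology spectral sequence on the local scheme $\mathrm{Spec}(O_{X,y})$, and then exploiting that over a field of characteristic $0$ the absolute differentials of a regular local ring are \emph{free} modules, which forces that spectral sequence to degenerate onto a single row.

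First I would invoke the support-cohomology interpretation of relative $HN$ set up in Section~4.3: the nine-diagram argument and the identity $\mathbb{H}^{-n}_{Y}(X,\mathcal{HN}(O,\varepsilon)) = HN_{n}(X[\varepsilon]\ on\ Y[\varepsilon],\varepsilon)$ recorded just before Theorem~4.16, applied with $X$ replaced by $\mathrm{Spec}(O_{X,y})$ and $Y$ the closed point $\{y\}$, give
\[
 HN_{m}(O_{X,y}[\varepsilon]\ on\ y[\varepsilon],\varepsilon)\ \cong\ \mathbb{H}^{-m}_{y}\big(\mathrm{Spec}(O_{X,y}),\ \mathcal{HN}(O,\varepsilon)\big).
\]
Then I attach to the right-hand side the hypercohomology-with-supports spectral sequence
\[
 E_{2}^{p,q}\ =\ H^{p}_{y}\big(\mathrm{Spec}(O_{X,y}),\ \underline{HN}_{-q}(O_{X}[\varepsilon],\varepsilon)\big)\ \Longrightarrow\ \mathbb{H}^{p+q}_{y}\big(\mathrm{Spec}(O_{X,y}),\ \mathcal{HN}(O,\varepsilon)\big),
\]
whose coefficients are the homotopy sheaves of $\mathcal{HN}(O,\varepsilon)$. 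By Theorem~4.9 these are, on the local scheme, the $O_{X,y}$-modules $\underline{HN}_{n}(O_{X}[\varepsilon],\varepsilon)\cong\Omega^{n-1}_{O_{X,y}/\mathbb{Q}}\oplus\Omega^{n-3}_{O_{X,y}/\mathbb{Q}}\oplus\cdots$ (a finite sum, bottom term $\Omega^{1}$ or $O_{X,y}$, and zero for $n\le 0$ by Corollary~4.7/4.8); the vanishing in non-positive degrees together with $\dim O_{X,y}=j<\infty$ guarantees convergence.

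The geometric heart of the argument is that each $\Omega^{p}_{O_{X,y}/\mathbb{Q}}$ is a free $O_{X,y}$-module. Since $k$ has characteristic $0$ it is perfect, so the regular local ring $O_{X,y}$, being essentially of finite type over $k$, is smooth over $k$; hence $\Omega^{1}_{O_{X,y}/k}$ is finite free of rank $j$, the fundamental exact sequence $0\to\Omega^{1}_{k/\mathbb{Q}}\otimes_{k}O_{X,y}\to\Omega^{1}_{O_{X,y}/\mathbb{Q}}\to\Omega^{1}_{O_{X,y}/k}\to 0$ splits, $\Omega^{1}_{k/\mathbb{Q}}\otimes_{k}O_{X,y}$ is free of (possibly infinite) rank, and exterior powers of free modules are free. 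Because $O_{X,y}$ is regular local of dimension $j$, $H^{p}_{y}(R)=0$ for $p\neq j$; since $H^{\bullet}_{y}(-)$ is computed by the \v{C}ech complex on a system of parameters it commutes with arbitrary direct sums, so $H^{p}_{y}(F)=0$ for every free $O_{X,y}$-module $F$ and every $p\neq j$. Hence only the row $p=j$ of the spectral sequence survives, it degenerates at $E_{2}$, and reading off the contribution to $\mathbb{H}^{-m}_{y}$ (so $q=-m-j$) yields
\[
 \mathbb{H}^{-m}_{y}\big(\mathrm{Spec}(O_{X,y}),\ \mathcal{HN}(O,\varepsilon)\big)\ \cong\ H^{j}_{y}\big(\underline{HN}_{m+j}(O_{X}[\varepsilon],\varepsilon)\big)\ =\ H^{j}_{y}\big(\Omega^{m+j-1}_{O_{X,y}/\mathbb{Q}}\oplus\Omega^{m+j-3}_{O_{X,y}/\mathbb{Q}}\oplus\cdots\big),
\]
which is precisely $H^{j}_{y}(\Omega^{\bullet}_{O_{X,y}/\mathbb{Q}})$ in the notation of the statement; for $m+j\le 0$ both sides are $0$.

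I expect the main obstacle to be making the degeneration rigorous, namely checking that the (possibly infinitely generated, when $\mathrm{trdeg}_{\mathbb{Q}}k=\infty$) modules $\Omega^{p}_{O_{X,y}/\mathbb{Q}}$ still have local cohomology concentrated in top degree $j$ --- this is exactly where regularity, finite type and characteristic $0$ all enter --- together with the accompanying bookkeeping for convergence of the spectral sequence. A secondary point is to confirm that the localization identification of relative $HN$ with support as hypercohomology with supports applies verbatim to the relative theory $\mathcal{HN}(O,\varepsilon)$; this is, however, precisely what Theorems~4.13--4.16 and the surrounding nine-diagrams were arranged to provide.
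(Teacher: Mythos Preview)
Your argument is correct and follows essentially the same route as the paper: identify the relative $HN$-with-support as hypercohomology with supports, run the local-cohomology spectral sequence, and use regularity of $O_{X,y}$ to force degeneration onto the single row $p=j$. The only cosmetic difference is in how that vanishing is justified --- the paper observes that $\Omega^{n}_{O_{X,y}/\mathbb{Q}}$ is a direct limit of copies of $O_{X,y}$ and invokes the depth condition, whereas you prove outright (via smoothness over $k$ and the split fundamental sequence) that $\Omega^{n}_{O_{X,y}/\mathbb{Q}}$ is free; both yield $H^{p}_{y}(\Omega^{n})=0$ for $p\neq j$.
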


\begin{proof}
$O_{X,y}$ is a regular local ring with dimension $j$, so the depth of $O_{X,y}$ is $j$. For each $n \in \mathbb{Z}$,  $\Omega^{n}_{O_{X,y}/\mathbb{Q}}$ can be written as a direct limit of 
$O_{X,y}^{\oplus}$'s. Therefore, $\Omega^{n}_{O_{X,y}/\mathbb{Q}}$ has depth $j$.

Let $HN_{m}(O_{X,y}[\varepsilon] \ on \ y[\varepsilon],\varepsilon)$ denote the kernel of the projection:
\[
HN_{m}(O_{X,y}[\varepsilon] \ on \ y[\varepsilon])  \xrightarrow{\varepsilon =0} HN_{m}(O_{X,y} \ on \ y).
\]
Then $HN_{m}(O_{X,y}[\varepsilon] \ on \ y[\varepsilon],\varepsilon)$ can be identified with the hypercohomology $\mathbb{H}_{y}^{-m}(O_{X,y},HN(O_{X,y}[\varepsilon],\varepsilon))$,
where $HN(O_{X,y}[\varepsilon],\varepsilon)$ is the relative negative cyclic complex, that is the kernel of
\[
 HN(O_{X,y}[\varepsilon]) \xrightarrow{\varepsilon=0} HN(O_{X,y}).
\]

There is a spectral sequence :
\[
 H_{y}^{p}(O_{X,y}, H^{q}(HN(O_{X,y}[\varepsilon],\varepsilon))) \Longrightarrow \mathbb{H}_{y}^{-m}(HN(O_{X,y}[\varepsilon],\varepsilon)).
\]

By corollary 4.8, we have
 \[
 H^{q}(HN(O_{X,y}[\varepsilon],\varepsilon))= HN_{-q}(O_{X,y}[\varepsilon],\varepsilon)= \Omega^{-q-1}_{O_{X,y}/\mathbb{Q}}\oplus \Omega^{-q-3}_{O_{X,y}/\mathbb{Q}}\oplus \dots
 \]
 As each $\Omega^{n}_{O_{X,y}/\mathbb{Q}}$ has depth $j$, only $H_{y}^{j}(X,H^{q}(HN(O_{X,y}[\varepsilon],\varepsilon)))$ can survive because of the depth condition.
This means $q=-m-j$ and 
\[
 H^{-m-j}(HN(O_{X,y}[\varepsilon],\varepsilon))=HN_{m+j}(O_{X,y}[\varepsilon],\varepsilon)= \Omega^{m+j-1}_{O_{X,y}/\mathbb{Q}}\oplus \Omega^{m+j-3}_{O_{X,y}/\mathbb{Q}}\oplus \dots
\]
Let's write 
\[
\Omega^{\bullet}_{O_{X,y}/\mathbb{Q}} = \Omega^{m+j-1}_{O_{X,y}/\mathbb{Q}}\oplus \Omega^{m+j-3}_{O_{X,y}/\mathbb{Q}}\oplus \dots
\]
Thus 
\[
 \mathbb{H}_{y}^{-m}(HN(O_{X,y}[\varepsilon],\varepsilon))=H_{y}^{j}(\Omega^{\bullet}_{O_{X,y}/\mathbb{Q}}).
\]
this means
\[
 HN_{m}(O_{X,y}[\varepsilon] \ on \ y_{\varepsilon},\varepsilon)= H_{y}^{j}(\Omega^{\bullet}_{O_{X,y}/\mathbb{Q}}).
\]
\end{proof}

Repeating the above proof and noting corollary 4.7, we have the following refiner result:
\begin{theorem}
 Suppose $X$ is a $d$-dimensional regular scheme over a field $k$, where $Char k=0$ and $y \in X^{(j)}$. For any integer $m$, we have 
\[
 HN^{(i)}_{m}(O_{X,y}[\varepsilon] \ on \ y[\varepsilon],\varepsilon)= H_{y}^{j}(\Omega^{\bullet,(i)}_{O_{X,y}/\mathbb{Q}}),
\]
where 
\begin{equation}
\begin{cases}
 \begin{CD}
 \Omega_{O_{X,y}/ \mathbb{Q}}^{\bullet,(i)}= \Omega^{{2i-(m+j)-1}}_{O_{X,y}/ \mathbb{Q}}, for \  \frac{m+j}{2}  < \ i \leq m+j.\\
  \Omega_{O_{X,y}/ \mathbb{Q}}^{\bullet,(i)}= 0, else.
 \end{CD}
\end{cases}
\end{equation} 
\end{theorem}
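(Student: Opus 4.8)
The plan is to rerun the proof of the preceding theorem essentially verbatim, replacing Corollary 4.8 by its Adams-refined counterpart Corollary 4.7 and carrying the weight index $(i)$ along. First I would record the two local facts that make the argument go: since $X$ is regular and $y\in X^{(j)}$, the ring $O_{X,y}$ is a regular local ring of Krull dimension $j$, hence of depth $j$; and, exactly as in the preceding proof, each module $\Omega^{n}_{O_{X,y}/\mathbb{Q}}$ of absolute K\"ahler differentials is a filtered colimit of finite free $O_{X,y}$-modules and therefore also has depth $j$. Consequently, for any $O_{X,y}$-module assembled from such $\Omega$'s the local cohomology $H^{p}_{y}(O_{X,y},-)$ is concentrated in the single degree $p=j$.

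Next I would put the group in question into hypercohomological form together with its Adams decomposition. As in the proof of the preceding theorem, $HN_{m}(O_{X,y}[\varepsilon]\text{ on }y[\varepsilon],\varepsilon)$, the kernel of the $\varepsilon=0$ map $HN_{m}(O_{X,y}[\varepsilon]\text{ on }y[\varepsilon])\to HN_{m}(O_{X,y}\text{ on }y)$, is identified with $\mathbb{H}^{-m}_{y}(O_{X,y},\mathcal{HN}(O_{X,y}[\varepsilon],\varepsilon))$, where $\mathcal{HN}(O_{X,y}[\varepsilon],\varepsilon)$ is the relative negative cyclic complex. Applying the functor $\mathbb{H}_{y}(O_{X,y},-)$, which preserves homotopy fibrations, to the splitting fibration $\mathcal{HN}^{(i)}(O,\varepsilon)\to\mathcal{HN}(O,\varepsilon)\to\prod_{j\neq i}\mathcal{HN}^{(j)}(O,\varepsilon)$ of [10, appendix B] then identifies $\mathbb{H}^{-m}_{y}(O_{X,y},\mathcal{HN}^{(i)}(O_{X,y}[\varepsilon],\varepsilon))$ with the $\psi^{k}=k^{i+1}$-eigenspace of $HN_{m}(O_{X,y}[\varepsilon]\text{ on }y[\varepsilon],\varepsilon)$, i.e. with $HN^{(i)}_{m}(O_{X,y}[\varepsilon]\text{ on }y[\varepsilon],\varepsilon)$.

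Then I would feed this into the descent spectral sequence
\[
 H^{p}_{y}\bigl(O_{X,y},\, H^{q}(\mathcal{HN}^{(i)}(O_{X,y}[\varepsilon],\varepsilon))\bigr)\ \Longrightarrow\ \mathbb{H}^{-m}_{y}\bigl(O_{X,y},\,\mathcal{HN}^{(i)}(O_{X,y}[\varepsilon],\varepsilon)\bigr).
\]
By Corollary 4.7 the cohomology sheaf $H^{q}(\mathcal{HN}^{(i)}(O_{X,y}[\varepsilon],\varepsilon))=HN^{(i)}_{-q}(O_{X,y}[\varepsilon],\varepsilon)$ equals $\Omega^{2i+q-1}_{O_{X,y}/\mathbb{Q}}$ when $-q/2<i\leq -q$ and vanishes otherwise; every such term has depth $j$, so by the first paragraph only the row $p=j$ survives, the spectral sequence collapses, and the only contributing bidegree is $(p,q)=(j,-m-j)$. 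Hence $HN^{(i)}_{m}(O_{X,y}[\varepsilon]\text{ on }y[\varepsilon],\varepsilon)=H^{j}_{y}(O_{X,y},HN^{(i)}_{m+j}(O_{X,y}[\varepsilon],\varepsilon))$, and substituting the value of $HN^{(i)}_{m+j}(O_{X,y}[\varepsilon],\varepsilon)$ from Corollary 4.7 --- namely $\Omega^{2i-(m+j)-1}_{O_{X,y}/\mathbb{Q}}$ for $\tfrac{m+j}{2}<i\leq m+j$ and $0$ otherwise --- yields exactly $H^{j}_{y}(\Omega^{\bullet,(i)}_{O_{X,y}/\mathbb{Q}})$ with $\Omega^{\bullet,(i)}$ as in the statement.

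I do not expect a serious obstacle here, since the argument is precisely the one already carried out for the non-refined statement. The only point that needs care is the bookkeeping of the Adams conventions --- remembering (cf. Theorem 4.11) that the superscript $(i)$ on negative cyclic homology denotes the $\psi^{k}=k^{i+1}$-eigenspace, so that the weight shift $n\mapsto 2i-n-1$ of Corollary 4.7 is applied with the correct value $n=m+j$ --- together with the routine verification that forming the $\psi^{k}$-eigen-piece commutes both with $\mathbb{H}_{y}$ and with the spectral sequence, which holds because it is the effect of an idempotent-splitting exact functor and may be performed before or after passing to local cohomology.
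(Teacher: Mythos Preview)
Your proposal is correct and is exactly the approach the paper takes: the paper's entire proof of this theorem is the single sentence ``Repeating the above proof and noting corollary 4.7, we have the following refiner result,'' which is precisely your plan of rerunning the proof of Theorem 4.17 with Corollary 4.7 in place of Corollary 4.8. Your write-up in fact supplies more detail than the paper does, in particular the justification via the splitting fibration that passing to the $(i)$-eigenpiece commutes with $\mathbb{H}_{y}$ and with the spectral sequence.
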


Combining theorem 4.16 and theorem 4.17 with theorem 4.18, we have the following corollary
\begin{corollary}
Under the same assumption as above, we have
\[
 K_{m}(O_{X,y}[\varepsilon] \ on \ y[\varepsilon],\varepsilon)= H_{y}^{j}(\Omega^{\bullet}_{O_{X,y}/\mathbb{Q}}),
\]
where $\Omega^{\bullet}_{O_{X,y}/\mathbb{Q}}=\Omega^{m+j-1}_{O_{X,y}/\mathbb{Q}}\oplus \Omega^{m+j-3}_{O_{X,y}/\mathbb{Q}}\oplus \dots$

Moreover, we have
\[
 K^{(i)}_{m}(O_{X,y}[\varepsilon] \ on \ y[\varepsilon],\varepsilon)= H_{y}^{j}(\Omega^{\bullet,(i)}_{O_{X,y}/\mathbb{Q}}),
\]
where 
\begin{equation}
\begin{cases}
 \begin{CD}
 \Omega_{O_{X,y}/ \mathbb{Q}}^{\bullet,(i)}= \Omega^{{2i-(m+j)-1}}_{O_{X,y}/ \mathbb{Q}}, for \  \frac{m+j}{2}  < \ i \leq m+j.\\
  \Omega_{O_{X,y}/ \mathbb{Q}}^{\bullet,(i)}= 0, else.
 \end{CD}
\end{cases}
\end{equation} 
\end{corollary}

We have the following generalization for $\varepsilon^{n}=0$, where $n$ is any integer.
\begin{corollary}
Let $\varepsilon$ satisfy $\varepsilon^{n}=0$. Under the same assumption as above, we have
\[
 K_{m}(O_{X,y}[\varepsilon] \ on \ y[\varepsilon],\varepsilon)= H_{y}^{j}(\Omega^{\bullet}_{O_{X,y}/\mathbb{Q}}),
\]
where $\Omega^{\bullet}_{O_{X,y}/\mathbb{Q}}=(\Omega^{m+j-1}_{O_{X,y}/\mathbb{Q}}\oplus \Omega^{m+j-3}_{O_{X,y}/\mathbb{Q}}\oplus \dots)^{\oplus n-1}$.

Moreover, we have 
\[
 K^{(i)}_{m}(O_{X,y}[\varepsilon] \ on \ y[\varepsilon],\varepsilon)= H_{y}^{j}(\Omega^{\bullet,(i)}_{O_{X,y}/\mathbb{Q}}),
\]
where 
\begin{equation}
\begin{cases}
 \begin{CD}
 \Omega_{O_{X,y}/ \mathbb{Q}}^{\bullet,(i)}= (\Omega^{{2i-(m+j)-1}}_{O_{X,y}/ \mathbb{Q}})^{\oplus n-1}, for \  \frac{m+j}{2}  < \ i \leq m+j.\\
  \Omega_{O_{X,y}/ \mathbb{Q}}^{\bullet,(i)}= 0, else.
 \end{CD}
\end{cases}
\end{equation} 
\end{corollary}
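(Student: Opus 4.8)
The plan is to repeat the proof of the preceding corollary (the case $\varepsilon^{2}=0$) step for step, the only genuinely new input being the Hochschild and cyclic homology of the truncated polynomial algebra $R[t]/(t^{n})$ in place of the dual numbers; everything else in Section 4.3 was already formulated for an arbitrary nilpotent ideal and so transfers unchanged.

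First I would observe that, since $(t)\subset R[t]/(t^{n})$ is nilpotent, the Goodwillie--Cathelineau package of Section 4.3 applies with $\varepsilon^{2}=0$ replaced by $\varepsilon^{n}=0$: the relative Chern character gives $K_{m}(O_{X,y}[\varepsilon]\ \text{on}\ y[\varepsilon],\varepsilon)\cong HN_{m}(O_{X,y}[\varepsilon]\ \text{on}\ y[\varepsilon],\varepsilon)$, compatibly with the Adams decomposition, hence $K_{m}^{(i)}\cong HN_{m}^{(i)}$ of the same objects (the Adams operations on the relevant negative $K$-groups with support being available in this generality by the descending-induction construction of Section 4.3). So it suffices to prove both displayed formulas with $K$, $K^{(i)}$ replaced by $HN$, $HN^{(i)}$.

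Next I would redo Theorem 4.5 and Corollaries 4.7--4.8 with $\mathbb{Q}[\varepsilon]$ replaced by $\mathbb{Q}[t]/(t^{n})$. Concretely: (i) the $\lambda$-graded K\"{u}nneth formula applied to $R[t]/(t^{n})=R\otimes_{\mathbb{Q}}\mathbb{Q}[t]/(t^{n})$ reduces everything to the $\lambda$-decomposed Hochschild homology $HH^{(\ast)}_{\ast}(\mathbb{Q}[t]/(t^{n}))$; (ii) this is the truncated-polynomial analogue of [22, 5.4.15]---equivalently, the complete-intersection computation $HH_{p}^{(i)}(\mathbb{Q}[t]/(t^{n}))=H_{p-i}\bigl(L\Lambda^{i}L_{\mathbb{Q}[t]/(t^{n})/\mathbb{Q}}\bigr)$ with $L_{\mathbb{Q}[t]/(t^{n})/\mathbb{Q}}\simeq[\,\mathbb{Q}[t]/(t^{n})\xrightarrow{\,nt^{n-1}\,}\mathbb{Q}[t]/(t^{n})\,]$ in degrees $1,0$---which occupies exactly the same bidegrees as the dual-number case but with every nonzero eigenpiece of dimension $n-1$ over $\mathbb{Q}$ rather than $1$, reflecting $\dim_{\mathbb{Q}}(t)/(t^{n})=n-1$; (iii) the $S$-operator still vanishes on $HC(R[t]/(t^{n}),(t))$ by Geller--Weibel [17] (the dual-number statement being its $n=2$ case), so the $SBI$ sequence again splits into short exact sequences $0\to HC^{(i-1)}_{p-1}\to HH^{(i)}_{p}\to HC^{(i)}_{p}\to 0$ relative to $(t)$; (iv) the same induction on $p$ as in Theorem 4.5 then yields $HC_{p}^{(i)}(R[t]/(t^{n}),(t))=(\Omega^{2i-p}_{R/\mathbb{Q}})^{\oplus n-1}$ for $p/2\le i\le p$ and $0$ otherwise, hence $HN_{p}^{(i)}(R[t]/(t^{n}),(t))=(\Omega^{2i-p-1}_{R/\mathbb{Q}})^{\oplus n-1}$ for $p/2<i\le p$ and $0$ otherwise, and I would sheafify this as in Theorem 4.9. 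Then, exactly as in the proofs of Theorems 4.16 and 4.18: $O_{X,y}$ is regular local of dimension $j$ and each $\Omega^{q}_{O_{X,y}/\mathbb{Q}}$ is a filtered colimit of finite free $O_{X,y}$-modules, hence has depth $j$, so in the local-cohomology spectral sequence $H^{a}_{y}(O_{X,y},H^{b}(HN(O_{X,y}[\varepsilon],\varepsilon)))\Rightarrow\mathbb{H}^{-m}_{y}(HN(O_{X,y}[\varepsilon],\varepsilon))$ only the row $a=j$ survives; reading it off gives $HN_{m}(O_{X,y}[\varepsilon]\ \text{on}\ y[\varepsilon],\varepsilon)=H^{j}_{y}(\Omega^{\bullet}_{O_{X,y}/\mathbb{Q}})$ with $\Omega^{\bullet}_{O_{X,y}/\mathbb{Q}}=(\Omega^{m+j-1}_{O_{X,y}/\mathbb{Q}}\oplus\Omega^{m+j-3}_{O_{X,y}/\mathbb{Q}}\oplus\cdots)^{\oplus n-1}$, together with its $i$-th Adams-eigenspace refinement. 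Combined with the second paragraph, this is the corollary.

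The hard part will be steps (ii)--(iii): securing the full $\lambda$-decomposed Hochschild (hence cyclic) homology of $\mathbb{Q}[t]/(t^{n})$ for every $n$ and the vanishing of $S$ on its relative cyclic homology. Once those are in place, the propagation of the $n-1$ copies through the K\"{u}nneth formula, the split $SBI$ sequences, the induction, and the local-cohomology spectral sequence is routine---but it is precisely where an off-by-one in the Adams weights could slip through, so as a final check I would specialize to $n=2$ and compare with the preceding corollary.
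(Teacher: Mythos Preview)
Your proposal is correct and follows exactly the route the paper implicitly intends: the paper states Corollary 4.20 without proof, as the evident generalization of Corollary 4.19, and your outline---replacing $\mathbb{Q}[\varepsilon]$ by $\mathbb{Q}[t]/(t^{n})$ in the computations of Section 4.2, invoking the general Geller--Weibel vanishing of $S$, and then rerunning the Goodwillie--Cathelineau reduction and the local-cohomology spectral-sequence argument of Theorems 4.16--4.18 verbatim---is precisely how one fills in the details. If anything, your write-up is more explicit than the paper's, which simply asserts the result.
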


\subsection{Bloch's formula}
\label{Bloch's formula}
Recall that $X$ is a $d$-dimensional regular scheme of finite type over a field $k$, where $Char k =0$.
Let $T_{j}$ denote the spectrum of the truncated polynomial Spec$(k[t]/(t^{j+1}))$ and  $X_j$ denote the $j$-th infinitesimal neighborhood, i.e. $X_j = X \times T_{j}$. The aim of this subsection is to extend Bloch's formula from $X$ to its infinitesimal neighborhood $X_j$. The tensor triangulated category $D^{perf}(X)$ and $D^{perf}(X_j)$ are equipped with $-codim_{Krull}$ as a dimension function.
\begin{definition}[4, definition 4] or similar definition 3.4.

For any interger $q$, there exists the following augmented Gersten complex $G_{j}$ on the $j$-th infinitesimal neighborhood $X_j$
{\footnotesize
\begin{align*}
 G_{j}: \ & 0 \to  K_{q}(X_{j}) \to K_{q}(k(X)_{j}) \to \bigoplus_{x_{j} \in X_{j} ^{(1)}}K_{q-1}(O_{X_{j},x_{j}} \ on \ x_{j}) \to \dots \\
 & \to \dots \to \bigoplus_{x_{j} \in X_{j} ^{(d)}}K_{q-d}(O_{X_{j},x_{j}} \ on \ x_{j}) \to 0
\end{align*}
}
where $X_{j}=X\times T_{j}$,  $k(X)_{j}= k(X)\times T_{j}$, $x_{j}=x\times T_{j}$.
\end{definition}

The sheafification of this Gersten complex is indeed a flasque resolution as proved below. In the following, we focus on $j=1$ for simplicity. 
\begin{theorem}
 There exists the following splitting commutative diagram(the integer $q \geq 1$):
{\footnotesize
\[
  \begin{CD}
     0 @. 0 @. 0\\
     @VVV @VVV @VVV\\
     \Omega_{X/ \mathbb{Q}}^{\bullet} @<<< K_{q}(X[\varepsilon]) @<<< K_{q}(X) \\
     @VVV @VVV @VVV\\
     \Omega_{k(X)/ \mathbb{Q}}^{\bullet} @<<<  K_{q}(k(X)[\varepsilon]) @<<< K_{q}(k(X)) \\
     @VVV @VVV @VVV\\
     \oplus_{x \in X^{(1)}}H_{x}^{1}(\Omega_{X/\mathbb{Q}}^{\bullet}) @<<< \oplus_{x[\varepsilon]\in X[\varepsilon]^{(1)}}K_{q-1}(O_{X,x}[\varepsilon] \ on \ x[\varepsilon]) @<<<  \oplus_{x \in X^{(1)}}K_{q-1}(O_{X,x} \ on \ x)\\
     @VVV @VVV @VVV\\
      \dots @<<< \dots @<<< \dots \\ 
     @VVV @VVV @VVV\\
     \oplus_{x\in X^{(d)}}H_{x}^{d}(\Omega_{X/ \mathbb{Q}}^{\bullet}) @<<< \oplus_{x[\varepsilon]\in X[\varepsilon]^{(d)}}K_{q-d}(O_{X,x}[\varepsilon] \ on \ x[\varepsilon]) @<<<  \oplus_{x \in X^{(d)}}K_{q-d}(O_{X,x} \ on \ x) \\
     @VVV @VVV @VVV\\
      0 @. 0 @. 0
  \end{CD}
\]
}
where 
\begin{equation}
\begin{cases}
 \begin{CD}
 \Omega_{X/ \mathbb{Q}}^{\bullet} = \Omega^{q-1}_{X/\mathbb{Q}}\oplus \Omega^{q-3}_{X/\mathbb{Q}}\oplus \dots\\
  \Omega_{k(X)/ \mathbb{Q}}^{\bullet} = \Omega^{q-1}_{k(X)/\mathbb{Q}}\oplus \Omega^{q-3}_{k(X)/\mathbb{Q}}\oplus \dots
 \end{CD}
\end{cases}
\end{equation}

\end{theorem}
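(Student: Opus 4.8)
\emph{Strategy.} The plan is to build the diagram out of the split fibration of (pre)sheaves of spectra
\[
 \mathcal{K}(O,\varepsilon)\;\longrightarrow\;\mathcal{K}(O_{X}[\varepsilon])\;\longrightarrow\;\mathcal{K}(O_{X}),
\]
and to run the coniveau/Gersten construction of Section~3.1 on all three factors at once. The $\mathbb{Q}$-algebra retraction $O_{X}[\varepsilon]\twoheadrightarrow O_{X}$, $\varepsilon\mapsto 0$, is split by the inclusion $O_{X}\hookrightarrow O_{X}[\varepsilon]$, so $\mathcal{K}(O_{X}[\varepsilon])\simeq\mathcal{K}(O_{X})\times\mathcal{K}(O,\varepsilon)$, where $\mathcal{K}(O,\varepsilon)$ is the fibre (the relative $K$-theory sheaf of spectra of Section~4.3). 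Passing to stalks and $K$-theory with support at a point $x\in X$ with $j=\dim O_{X,x}$ gives a natural splitting
\[
 K_{q-j}\bigl(O_{X,x}[\varepsilon]\ \mathrm{on}\ x[\varepsilon]\bigr)\;=\;K_{q-j}\bigl(O_{X,x}\ \mathrm{on}\ x\bigr)\;\oplus\;K_{q-j}\bigl(O_{X,x}[\varepsilon]\ \mathrm{on}\ x[\varepsilon],\varepsilon\bigr),
\]
and these are exactly the split (hence exact) rows of the asserted diagram, the generic point giving the $k(X)$-row.

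\emph{The columns and commutativity are formal.} The morphism $p\colon X[\varepsilon]\to X$ is flat and finite, so by Lemma~3.11 (Klein's lemma, applied to $p$) the functor $Lp^{\ast}$ respects the filtration by codimension of support and therefore induces a morphism of coniveau spectral sequences; on $E_{1}$-rows this is the right-to-middle map of augmented Gersten complexes. The middle-to-left map is induced by the projection of sheaves of spectra onto the relative summand, hence is again compatible with the coniveau differentials by naturality. Since a split fibration of sheaves of spectra yields a split short exact sequence of the associated $E_{1}$-complexes, the middle Gersten complex is literally the direct sum of the outer two; in particular every square commutes. Note that $X$ and $X[\varepsilon]$ have the same underlying space and $\dim O_{X[\varepsilon],x[\varepsilon]}=\dim O_{X,x}$ because $k[\varepsilon]$ is artinian, so the indexing by $X^{(i)}$ is the same in all three columns.

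\emph{Identification of the left column.} By Corollary~4.19 (which rests on the space-level Goodwillie--Cathelineau isomorphism with support, Theorem~4.16, together with the cyclic-homology computation of Section~4.2), for $x\in X^{(j)}$ and $m=q-j$ one has
\[
 K_{q-j}\bigl(O_{X,x}[\varepsilon]\ \mathrm{on}\ x[\varepsilon],\varepsilon\bigr)\;=\;H^{j}_{x}\bigl(\Omega^{q-1}_{O_{X,x}/\mathbb{Q}}\oplus\Omega^{q-3}_{O_{X,x}/\mathbb{Q}}\oplus\cdots\bigr),
\]
since the range $\tfrac{m+j}{2}<i\le m+j$, i.e.\ $\tfrac{q}{2}<i\le q$, picks out exactly the codegrees $q-1,q-3,\dots$, terminating in $\Omega^{1}$ or $O_{X,x}$ according to the parity of $q$; this is where $q\ge 1$ is needed, as for $q=0$ all relative terms vanish. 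Thus, at each spot the left column is $\bigoplus_{x\in X^{(j)}}H^{j}_{x}(\Omega^{\bullet}_{X/\mathbb{Q}})$ with $\Omega^{\bullet}_{X/\mathbb{Q}}=\Omega^{q-1}_{X/\mathbb{Q}}\oplus\Omega^{q-3}_{X/\mathbb{Q}}\oplus\cdots$ and its differentials are the coniveau (Cousin) differentials. Because $X$ is regular over $k$, each $\Omega^{p}_{O_{X,x}/\mathbb{Q}}$ is a filtered colimit of finite free $O_{X,x}$-modules and hence has depth $\dim O_{X,x}$ (the observation already used in the proof of Theorem~4.17), so this Cousin complex is a flasque resolution of the sheaf $\Omega^{\bullet}_{X/\mathbb{Q}}$. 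Assembling the three columns with the split rows produces the splitting commutative diagram; combined with the Quillen--Gersten resolution of the right column (Theorem~3.8) this also exhibits the middle column as a flasque resolution of $\mathcal{K}_{q}(O_{X}[\varepsilon])\cong\mathcal{K}_{q}(O_{X})\oplus\Omega^{\bullet}_{X/\mathbb{Q}}$, which is the input to Bloch's formula for $X_{1}$.

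\emph{Main obstacle.} The only genuinely non-formal ingredient is the last step: the sheaves $\Omega^{p}_{X/\mathbb{Q}}$ are not coherent, so one must justify via the ``filtered colimit of finite frees, hence maximal depth'' argument that their Cousin complexes really are flasque resolutions and that the coniveau $E_{1}$-row of $\mathcal{K}(O,\varepsilon)$ coincides termwise with that Cousin complex. Everything else is the functoriality of the coniveau/Gersten machinery applied to the split fibration $\mathcal{K}(O,\varepsilon)\to\mathcal{K}(O_{X}[\varepsilon])\to\mathcal{K}(O_{X})$, together with the computations of Sections~4.2--4.3 that are already in hand; the general $j\ge 1$ case is identical using Corollary~4.20 in place of Corollary~4.19.
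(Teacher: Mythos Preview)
Your proof is correct and follows the same approach as the paper: the two right columns are the augmented Gersten complexes of Definition~4.21, the splitting comes from the retraction $\varepsilon\mapsto 0$, and the left column is identified termwise via Corollary~4.19. The paper's own proof is a two-line sketch (``right two columns are Definition~4.21; left one by direct computation using Corollary~4.19''), and what you have written is essentially that sketch with the details filled in --- you make explicit why the diagram splits and commutes (naturality of the coniveau machine under the split fibration of $K$-theory spectra), which the paper leaves to the reader.

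Two minor remarks. First, invoking Lemma~3.11 for $p\colon X[\varepsilon]\to X$ is harmless but heavier than needed: since $X$ and $X[\varepsilon]$ share the same underlying space, compatibility of the codimension filtrations is immediate. Second, your ``Main obstacle'' paragraph and the discussion of flasque resolutions actually prove Theorem~4.23 rather than Theorem~4.22; the statement at hand only asserts existence of the splitting commutative diagram, not that the sheafified columns are flasque resolutions. This is not an error --- you simply prove more than required --- but it is worth separating the two claims, as the paper does.
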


\begin{proof}
The existence of the right two columns are definition 4.21. The left one, classical Cousin complex, can be obtained by direct computation, using corollary 4.19.
\end{proof}

\begin{theorem}
For each integer $j$, the sheafified Gersten complex $G_{j}$ is a flasque resolution.
\end{theorem}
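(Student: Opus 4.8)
\emph{Proof proposal.} The plan is to exhibit the sheafified Gersten complex $\mathcal{G}_j$ of $X_j$ as the middle term of a short exact sequence of complexes of sheaves on $X$ (recall $X_j\to X$ is a homeomorphism),
\[
0\;\longrightarrow\;\mathcal{G}(X)\;\longrightarrow\;\mathcal{G}_j\;\longrightarrow\;\mathcal{C}^{\bullet}\;\longrightarrow\;0,
\]
where $\mathcal{G}(X)$ is Quillen's (sheafified) Gersten complex of the regular scheme $X$ and $\mathcal{C}^{\bullet}$ is a Cousin complex built from absolute K\"ahler differentials. Since each of the two outer complexes will be seen to be a flasque resolution, a routine homological argument will force $\mathcal{G}_j$ to be one as well. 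For $j=1$ this short exact sequence is precisely the sheafification of the splitting diagram of Theorem 4.24; for general $j$ I would reprove that diagram verbatim, with Corollary 4.20 (taken with $n=j+1$) in place of Corollary 4.19.

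First I would set up the term-wise splitting. For $x\in X^{(i)}$ write $O_{X_j,x_j}=O_{X,x}[t]/(t^{j+1})$; the nilpotent ideal $(t)$ together with the ring section $O_{X,x}\hookrightarrow O_{X_j,x_j}$ of the quotient gives a natural decomposition of K-theory with support
\[
K_{q-i}(O_{X_j,x_j}\ \mathrm{on}\ x_j)\;=\;K_{q-i}(O_{X,x}\ \mathrm{on}\ x)\ \oplus\ K_{q-i}(O_{X_j,x_j}\ \mathrm{on}\ x_j,(t)),
\]
and by Corollary 4.20 the relative summand is $H^{i}_{x}\big((\Omega^{\bullet}_{X/\mathbb{Q}})^{\oplus j}\big)$, with $\Omega^{\bullet}_{X/\mathbb{Q}}=\Omega^{q-1}_{X/\mathbb{Q}}\oplus\Omega^{q-3}_{X/\mathbb{Q}}\oplus\cdots$ — exactly the degree-$i$ term of the Cousin complex of the flat quasi-coherent sheaf $(\Omega^{\bullet}_{X/\mathbb{Q}})^{\oplus j}$ (note the exponents are independent of $i$). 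I would then check that the Gersten (coniveau) differentials respect this decomposition: this is the content of Theorem 4.24 when $j=1$, and it carries over unchanged because every map involved — Goodwillie's relative Chern character, the Cathelineau eigenspace splitting, and the localization/connecting maps of the coniveau spectral sequence — is natural in the pair $(\mathcal{O},I)$. This produces the short exact sequence above, with $\mathcal{C}^{\bullet}$ the Cousin complex of $(\Omega^{\bullet}_{X/\mathbb{Q}})^{\oplus j}$.

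Next I would identify the outer complexes as flasque resolutions. The complex $\mathcal{G}(X)$ is Quillen's Gersten complex for the regular $k$-scheme $X$, a flasque resolution of $\underline{K}_q(O_X)$ by Quillen's d\'evissage argument (already invoked in Theorem 3.8). For $\mathcal{C}^{\bullet}$: since $X$ is regular of finite type over a characteristic-$0$ (hence perfect) field, it is smooth over $k$, so $0\to O_X\otimes_k\Omega_{k/\mathbb{Q}}\to\Omega_{X/\mathbb{Q}}\to\Omega_{X/k}\to 0$ shows each $\Omega^n_{X/\mathbb{Q}}$, and hence $(\Omega^{\bullet}_{X/\mathbb{Q}})^{\oplus j}$, is a filtered colimit of finite locally free $O_X$-modules; in particular its stalk at every $x\in X$ has depth $\dim O_{X,x}$. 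Grothendieck's theory of the Cousin complex on a regular (universally catenary, Cohen--Macaulay) scheme then gives that $\mathcal{C}^{\bullet}$ is a resolution of the relative sheaf $\underline{K}_q(O_{X_j},(t))\cong(\Omega^{\bullet}_{X/\mathbb{Q}})^{\oplus j}$, with flasque terms (direct sums of skyscraper sheaves of local cohomology modules).

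Finally I would conclude. In $0\to\mathcal{G}(X)\to\mathcal{G}_j\to\mathcal{C}^{\bullet}\to 0$ the outer complexes have flasque terms, and an extension of a flasque sheaf by a flasque sheaf is flasque, so every term of $\mathcal{G}_j$ is flasque; and the long exact sequence of cohomology sheaves, together with the vanishing of $\mathcal{H}^{>0}$ of the two outer complexes, forces $\mathcal{H}^{>0}(\mathcal{G}_j)=0$ and $0\to\underline{K}_q(O_X)\to\mathcal{H}^0(\mathcal{G}_j)\to\underline{K}_q(O_{X_j},(t))\to 0$, which recovers $\underline{K}_q(O_{X_j})$ via the splitting. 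Hence $\mathcal{G}_j$ is a flasque resolution of $\underline{K}_q(O_{X_j})$ (and Bloch's formula then follows by taking global sections). I expect the one genuine obstacle to be the middle step — promoting the naturality of the Goodwillie--Cathelineau identifications and of the coniveau machinery to an honest short exact sequence of complexes of sheaves for all $j$ simultaneously — since the remaining ingredients are either Quillen's theorem or the classical Cousin-complex/Grothendieck-duality formalism.
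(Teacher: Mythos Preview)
Your proposal is correct and follows essentially the same approach as the paper: the paper's proof simply invokes the splitting diagram of Theorem~4.22 (your reference to ``Theorem~4.24'' should be 4.22), observes that the sheafifications of the two outer columns---Quillen's Gersten complex for the regular scheme $X$ and the Cousin complex of absolute differentials---are flasque resolutions, and concludes the same for the middle column, with the general-$j$ case handled by the identical argument using Corollary~4.20. The only cosmetic difference is that the paper exploits the splitting directly (so the middle column is literally the direct sum of the outer two, hence a flasque resolution), whereas you argue via the more general extension-of-flasques and long-exact-sequence-of-cohomology route; both are valid, and your version would still work even if the sequence failed to split.
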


\begin{proof}
For $j=1$, since $X$ is regular, the sheafifications of both the left and right column in theorem 4.22 are flasque resolutions. So is the middle. For general $j$, the same method works.
\end{proof}

\begin{remark}
The above two theorems answer the following question asked by Green-Griffiths in [16]:
 \begin{quote}
Can one define the Bloch-Quillen-Gersten sequence $G_j$ on 
infinitesimal neighborhoods $X_j = X \times Spec(k[t]/(t^{j+1})$
so that 
\[
 ker(G_1 \to G_0) =  \underline{\underline{T}}G_0,
\]
where $\underline{\underline{T}}G_0$ is a Cousin resolution of differentials.
\end{quote}
The readers can check [16] for more background and [11,36] for discussion 
of the above theorem from different point of view(Chern character, effacement theorem etc).
\end{remark}

Now we can extend Bloch's formula to the infinitesimal neighborhoods $X_{j}$.

\begin{theorem}

\textbf{Bloch's formula}

We have the following identification
\[
  CH_{q}(D^{perf}(X_{j})) = H^{q}(X, K_{q}(O_{X_{j}})).
\]
In particular, for $j=1$,
\[
  CH_{q}(D^{perf}(X[\varepsilon])) = H^{q}(X, K_{q}(O_{X}[\varepsilon])).
\]

\end{theorem}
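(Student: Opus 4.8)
The plan is to deduce the formula as a formal consequence of definition 3.6 applied to the thickening $X_{j}$ together with the flasque resolution result of theorem 4.23; the only genuine work is to match cohomological degrees and to note that Zariski cohomology does not see the nilpotents. First I would apply definition 3.6 to the noetherian scheme $X_{j}=X\times T_{j}$, with $D^{perf}(X_{j})$ equipped with $-codim_{Krull}$ as its dimension function. By theorem 3.1 for $X_{j}$ -- using that $-codim_{Krull}$ identifies $(X_{j})_{(-p)}$ with $X_{j}^{(p)}$ and that $X_{j}$ and $X$ share the same underlying space -- the Gersten complex of definition 3.4 attached to $D^{perf}(X_{j})$ is exactly the complex obtained from $G_{j}$ of definition 4.21 by dropping the augmentation term $K_{q}(X_{j})$, with $\bigoplus_{x_{j}\in X_{j}^{(p)}}K_{q-p}(O_{X_{j},x_{j}} \ on \ x_{j})$ placed in cohomological degree $p$ (so degree $0$ carries $K_{q}(k(X)_{j})$). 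By definition 3.6, $CH_{q}(D^{perf}(X_{j}))=Ker(d_{1}^{q,-q})/Im(d_{1}^{q-1,-q})$ is precisely the degree-$q$ cohomology of this complex.

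Next I would invoke theorem 4.23: the sheafification of $G_{j}$, together with its augmentation, is a flasque resolution of the Zariski sheaf $\underline{K}_{q}(O_{X_{j}})$ associated to $U\mapsto K_{q}(O_{X_{j}}(U))$. Computing sheaf cohomology from this resolution identifies $H^{p}(X,\underline{K}_{q}(O_{X_{j}}))$ with the degree-$p$ cohomology of the complex of global sections, which is exactly $G_{j}$ with the augmentation removed; taking $p=q$ therefore gives $CH_{q}(D^{perf}(X_{j}))=H^{q}(X,\underline{K}_{q}(O_{X_{j}}))$, the asserted identity, and $j=1$ is the displayed special case for $X[\varepsilon]$. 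For general $j$ one repeats the argument, replacing the $k[\varepsilon]$-version of the splitting diagram of theorem 4.22 by its $k[t]/(t^{j+1})$-analogue -- available through corollary 4.20 -- exactly as the proof of theorem 4.23 already indicates.

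All the substance here -- exactness of the sheafified $G_{j}$ in positive degrees and flasqueness of its terms -- is the content of theorems 4.22 and 4.23, which themselves rest on the cyclic-homology computations of \S4.2 and the Goodwillie--Cathelineau isomorphisms of \S4.3. So no new obstruction appears: the one real step is the appeal to theorem 4.23, after which the identification falls out of the definitions, and the only thing to be careful about is the degree bookkeeping described above.
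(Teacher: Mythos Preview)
Your proposal is correct and follows essentially the same route as the paper: the paper's proof consists of the two sentences ``by definition $CH_{q}(D^{perf}(X_{j}))=H^{q}(G_{j})$'' and ``the sheafification of $G_{j}$ is a flasque resolution'', and you have simply spelled out the degree bookkeeping and the passage from flasque resolution to global-section cohomology that those sentences leave implicit.
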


\begin{proof}
The definition of $CH_{q}(D^{perf}(X_{j}))$ says that it equal to the $q$-th cohomology of the Gersten complex $G_{j}$
\[
 CH_{q}(D^{perf}(X_{j}) = H^{q}(G_{j}).
\]   
It follows because of the fact that the sheafification of $G_{j}$ is a flasque resolution.
\end{proof}

Now, we consider the K-theoretic Chow group as a functor on $X$ and define the tangent space to it as usually.

\begin{definition}
 The tangent space to $CH_{q}(D^{perf}(X))$, denoted by $T_{f}CH_{q}(D^{perf}(X))$, is defined as 
\[
  T_{f}CH_{q}(D^{perf}(X)):= Ker \{Ch_{q}(D^{perf}(X[\varepsilon])) \xrightarrow{\varepsilon =0} CH_{q}(D^{perf}(X)) \}.
\]
\end{definition}

We can identify this tangent space with cohomology group of absolute differentials.
\begin{theorem}
\[
  T_{f}CH_{q}(D^{perf}(X)) = H^{q}(\Omega_{O_{X}/ \mathbb{Q}}^{\bullet}), 
\]
where $\Omega_{O_{X}/ \mathbb{Q}}^{\bullet} = \Omega^{q-1}_{O_{X}/\mathbb{Q}}\oplus \Omega^{q-3}_{O_{X}/\mathbb{Q}}\oplus \dots$
\end{theorem}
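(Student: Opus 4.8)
The plan is to compute the tangent space directly from the two Bloch's formula identifications already established. First I would use Theorem 4.25 applied to $j=1$, which gives
\[
  CH_{q}(D^{perf}(X[\varepsilon])) = H^{q}(X, K_{q}(O_{X}[\varepsilon])),
\]
together with the classical Bloch formula $CH_{q}(D^{perf}(X)) = H^{q}(X, K_{q}(O_{X}))$ from Theorem 3.8. The map $\varepsilon = 0 \colon CH_{q}(D^{perf}(X[\varepsilon])) \to CH_{q}(D^{perf}(X))$ is induced, under these identifications, by the sheaf map $K_{q}(O_{X}[\varepsilon]) \to K_{q}(O_{X})$ coming from $\varepsilon = 0$; hence its kernel is $H^{q}(X, \mathcal{K}_{q}(O_{X}[\varepsilon], \varepsilon))$, where $\mathcal{K}_{q}(O_{X}[\varepsilon], \varepsilon)$ is the sheafified relative $K$-theory. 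The key input is that this kernel sheaf can be computed term-by-term on the Gersten resolutions: the splitting commutative diagram of Theorem 4.22 exhibits the sheafified relative Gersten complex $\ker(G_{1} \to G_{0})$ as the left-hand column, which by Theorem 4.23 (or Remark 4.24) is precisely the Cousin resolution of $\Omega^{\bullet}_{O_{X}/\mathbb{Q}} = \Omega^{q-1}_{O_{X}/\mathbb{Q}} \oplus \Omega^{q-3}_{O_{X}/\mathbb{Q}} \oplus \dots$, whose sheafification is a flasque resolution.

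The key steps in order: (1) Identify the tangent space with the hypercohomology of the relative Gersten complex, using that $CH_{q}$ is the $q$-th cohomology of $G_{j}$ and that kernels commute with taking cohomology of a short exact sequence of complexes — here the relevant sequence is $0 \to \ker(G_{1} \to G_{0}) \to G_{1} \to G_{0} \to 0$, which is termwise split exact by Theorem 4.22, so the long exact cohomology sequence degenerates and $H^{q}(\ker(G_{1}\to G_{0})) = \ker(H^{q}(G_{1}) \to H^{q}(G_{0})) = T_{f}CH_{q}(D^{perf}(X))$. (2) Invoke Theorem 4.23: the sheafification of $\ker(G_{1} \to G_{0})$ is a flasque resolution of its zeroth cohomology sheaf. (3) Identify that zeroth cohomology sheaf: from the explicit computation (Corollary 4.19 with $j=0$, $m=q$) the relative $K$-group at the generic point is $\Omega^{q-1}_{k(X)/\mathbb{Q}} \oplus \Omega^{q-3}_{k(X)/\mathbb{Q}} \oplus \dots$, and the left column of Theorem 4.22 is exactly the Cousin complex of $\Omega^{\bullet}_{O_{X}/\mathbb{Q}}$, so the sheaf is $\Omega^{\bullet}_{O_{X}/\mathbb{Q}} = \bigoplus_{r \geq 0} \Omega^{q-1-2r}_{O_{X}/\mathbb{Q}}$. (4) Conclude $T_{f}CH_{q}(D^{perf}(X)) = H^{q}(X, \Omega^{\bullet}_{O_{X}/\mathbb{Q}}) = H^{q}(\Omega^{\bullet}_{O_{X}/\mathbb{Q}})$, since a flasque resolution computes sheaf cohomology.

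The main obstacle is step (1): one must be careful that the operation $\ker(-)$ and the operation $H^{q}(-)$ (taking cohomology of the Gersten complex, i.e. the Chow group) genuinely commute. This requires knowing that $G_{1} \to G_{0}$ is termwise surjective with kernel $\ker(G_{1}\to G_{0})$ in a way compatible with the differentials — which is exactly what the splitting of the columns in Theorem 4.22 provides (each row is split short exact, so $G_{1} \cong G_{0} \oplus \ker(G_{1}\to G_{0})$ as complexes of sheaves), whence the long exact sequence splits into short exact sequences $0 \to H^{q}(\ker) \to H^{q}(G_{1}) \to H^{q}(G_{0}) \to 0$ and the left term is the desired kernel. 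Once this bookkeeping is in place, everything else is a direct appeal to Theorems 4.22, 4.23 and the degeneration of the relevant spectral sequence / the flasqueness of the resolution, so the remaining verifications are routine.
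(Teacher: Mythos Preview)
Your proposal is correct and is essentially the same approach as the paper's: the paper's proof reads in its entirety ``Diagram chasing. Immediately follows from theorem 4.22 and 4.23,'' and you have carried out precisely that diagram chase, using the termwise splitting of the short exact sequence of Gersten complexes in Theorem~4.22 to identify $T_{f}CH_{q}$ with the $q$-th cohomology of the left column (the Cousin complex of $\Omega^{\bullet}_{O_X/\mathbb{Q}}$), and then using the flasqueness from Theorem~4.23 to compute that cohomology. Your initial detour through Theorems~4.25 and~3.8 is unnecessary, since step~(1) already gives the identification directly at the level of Gersten complexes, but this does no harm.
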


\begin{proof}
Diagram chasing. Immediately follows from theorem 4.22 and 4.23.
\end{proof}

Next, we want to use Adams' operation to refiner the diagram in theorem 4.22. Since $q$ can be any integer there, negative K-groups might appear. One can use Weibel's method to extend Adams' operations to negative K-groups, as recalled in section 4.3. In the following, $K^{(i)}_{n}$ denotes the eigen-space of $\psi^{k}=k^{i}$.

\begin{theorem}
There exists the following splitting commutative diagram(the integer $q \geq 1$), each column is a complex whose Zariski sheafification is a flasque resolution. 
{\footnotesize
\[
  \begin{CD}
     0 @. 0 @. 0\\
     @VVV @VVV @VVV\\
     \Omega_{X/ \mathbb{Q}}^{\bullet,(i)} @<<< K^{(i)}_{q}(X[\varepsilon]) @<<< K^{(i)}_{q}(X) \\
     @VVV @VVV @VVV\\
     \Omega_{k(X)/ \mathbb{Q}}^{\bullet,(i)} @<<<  K^{(i)}_{q}(k(X)[\varepsilon]) @<<< K^{(i)}_{q}(k(X)) \\
     @VVV @VVV @VVV\\
     \oplus_{x \in X^{(1)}}H_{x}^{1}(\Omega_{X/\mathbb{Q}}^{\bullet,(i)}) @<<< \oplus_{x[\varepsilon]\in X[\varepsilon]^{(1)}}K^{(i)}_{q-1}(O_{X,x}[\varepsilon] \ on \ x[\varepsilon]) @<<<  \oplus_{x \in X^{(1)}}K^{(i)}_{q-1}(O_{X,x} \ on \ x)\\
     @VVV @VVV @VVV\\
      \dots @<<< \dots @<<< \dots \\ 
     @VVV @VVV @VVV\\
     \oplus_{x\in X^{(d)}}H_{x}^{n}(\Omega_{X/ \mathbb{Q}}^{\bullet,(i)}) @<<< \oplus_{x[\varepsilon]\in X[\varepsilon]^{(d)}}K^{(i)}_{q-d}(O_{X,x}[\varepsilon] \ on \ x[\varepsilon]) @<<<  \oplus_{x \in X^{(d)}}K^{(i)}_{q-d}(O_{X,x} \ on \ x) \\
     @VVV @VVV @VVV\\
      0 @. 0 @. 0
  \end{CD}
\]
}
where 
\begin{equation}
\begin{cases}
 \begin{CD}
 \Omega_{X/ \mathbb{Q}}^{\bullet,(i)}= \Omega^{{2i-q-1}}_{X/ \mathbb{Q}}, for \  \frac{q}{2}  < \ i \leq q.\\
  \Omega_{X/ \mathbb{Q}}^{\bullet,(i)}= 0, else.
 \end{CD}
\end{cases}
\end{equation} 
\end{theorem}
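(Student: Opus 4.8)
The strategy is to obtain the asserted diagram as the weight-$i$ Adams eigenspace summand of the diagram of Theorem 4.22; all of the analytic input (the relative cyclic-homology computations, Goodwillie's and Cathelineau's theorems, and the Gersten resolution for regular $X$) has already been assembled, so what remains is to check $\psi^{k}$-equivariance and to identify the resulting terms.

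First I would record that $\psi^{k}$ acts compatibly on the entire diagram of Theorem 4.22. Each entry carries Adams operations: on the $K$-groups with support $K_{q-p}(O_{X,x} \ on \ x)$ and $K_{q-p}(O_{X,x}[\varepsilon] \ on \ x[\varepsilon])$ this is Soul\'e's construction (Section 4.3) in non-negative degree and Weibel's descending induction via the Bass fundamental sequence in negative degree, and the relative groups $K_{q-p}(O_{X,x}[\varepsilon] \ on \ x[\varepsilon],\varepsilon)$ then inherit them as kernels of $\psi^{k}$-equivariant maps. The horizontal maps of the diagram ($\varepsilon=0$ and the projection onto the relative summand) are induced by morphisms of schemes, hence $\psi^{k}$-equivariant. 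For the vertical Gersten differentials $d_{1}=j\circ k$, the map $k$ is a boundary map and $j$ a restriction map in localization sequences, realized by cofiber sequences of $K$-theory spectra on which $\psi^{k}$ is a self-map in the homotopy category; here one keeps track of the weight shift of Riemann--Roch without denominators (Theorem 4.3), under which the weight-$i$ part of $K_{q-p}(O_{X,x} \ on \ x)$ for $x\in X^{(p)}$ is the weight-$(i-p)$ part of $K_{q-p}(k(x))$, while the classical Gersten differential lowers degree and weight by one simultaneously, so the absolute weight $i$ is preserved from one column entry to the next.

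Next I would take the weight-$i$ eigenspace. After $\otimes\mathbb{Q}$ the $\psi^{k}$ are diagonalizable and passing to the weight-$i$ part is an exact additive functor, so applying it to the diagram of Theorem 4.22 yields, for each $i$, a commutative diagram: the right column becomes the weight-$i$ part of Quillen's Gersten complex of $X$, the middle column that of $X[\varepsilon]$, and, since the horizontal short exact sequences split via $\varepsilon=0$, one gets $K^{(i)}_{q-p}(O_{X,x}[\varepsilon] \ on \ x[\varepsilon]) = K^{(i)}_{q-p}(O_{X,x} \ on \ x)\oplus K^{(i)}_{q-p}(O_{X,x}[\varepsilon] \ on \ x[\varepsilon],\varepsilon)$, the relative summand being identified by Corollary 4.19 (equivalently by Cathelineau's isomorphism together with Theorem 4.18) with $H^{p}_{x}(\Omega^{\bullet,(i)}_{X/\mathbb{Q}})$. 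Specializing $m=q-p$ and $j=p$, so $m+j=q$, gives exactly $\Omega^{\bullet,(i)}_{X/\mathbb{Q}}=\Omega^{2i-q-1}_{X/\mathbb{Q}}$ for $q/2<i\le q$ and $\Omega^{\bullet,(i)}_{X/\mathbb{Q}}=0$ otherwise, matching the displayed cases; for $i$ outside this range the relative column of Theorem 4.22 contributes nothing in weight $i$, consistently.

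Finally, each column sheafifies to a flasque resolution. For the right column, $X$ regular of finite type over a field satisfies Gersten's conjecture (Quillen [25]), so its Gersten complex sheafifies to a flasque resolution of $\underline{K}_{q}(O_{X})$; rationally this decomposes as a direct sum over $i$ of subcomplexes of flasque sheaves, the $i$-th summand being a flasque resolution of $\underline{K}^{(i)}_{q}(O_{X})$ by compatibility of the Adams operations with the Gersten resolution (Soul\'e [29]). For the left column, it is the complex of global sections of the Cousin complex of $\underline{\Omega}^{2i-q-1}_{X/\mathbb{Q}}$ (or the zero complex): locally $\Omega^{n}_{O_{X,x}/\mathbb{Q}}$ is a filtered colimit of finite free $O_{X,x}$-modules, hence flat, and since local cohomology commutes with colimits and $H^{p}_{\mathfrak{m}_{x}}(O_{X,x})$ vanishes off $p=\dim O_{X,x}$, the Cousin complex is a resolution with flasque terms (the local-cohomology argument in the proof of Theorem 4.17). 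The middle column, by the splitting, is the direct sum of the left and right columns as complexes of sheaves, hence also sheafifies to a flasque resolution (of $\underline{K}^{(i)}_{q}(O_{X}[\varepsilon])$), and commutativity of the diagram is inherited from Theorem 4.22. The main obstacle is the first step: making the compatibility of $\psi^{k}$ with the Gersten differentials and the localization boundary maps precise while correctly bookkeeping the Riemann--Roch weight shifts and Weibel's extension of $\psi^{k}$ to negative $K$-theory; granting that, the remainder is the formal eigenspace decomposition of material already in Sections 4.2--4.4.
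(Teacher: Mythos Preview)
Your overall strategy---take the weight-$i$ Adams eigenspace of the diagram in Theorem 4.22 and use the splitting for the middle column---is the paper's strategy, and your second and third paragraphs match it closely. But the obstacle you flag at the end, and leave unresolved, is one the paper sidesteps entirely by two observations you miss.

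First, for the right column: since $X$ is regular, d\'evissage gives $K_{q-p}(O_{X,x}\ \mathrm{on}\ x)\cong K_{q-p}(k(x))$, and negative $K$-groups of fields vanish. So no negative $K$-theory appears in the right column at all, and Soul\'e's space-level Adams operations on the coniveau spectral sequence decompose that column directly---there is no need for Weibel's extension or for any compatibility check of $\psi^{k}$ with localization boundary maps in negative degree.

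Second, for the middle column the paper never attempts to verify $\psi^{k}$-equivariance of its Gersten differential. Instead it uses that Theorem 4.22 is a splitting \emph{of complexes}: the middle differential is block-diagonal, $\partial_{\varepsilon}=(\delta,\partial)$, with $\delta$ and $\partial$ the differentials of the left and right columns. The weight-$i$ piece of the middle column is then simply the direct sum of the weight-$i$ pieces of the other two (with Corollary 4.19 identifying the relative summand), and it inherits the flasque-resolution property from them.

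With these two points your first paragraph---and the obstacle it leaves open---becomes unnecessary, and what remains of your argument coincides with the paper's.
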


\begin{proof}
We note that negative K-groups(if appear) of the right column theorem 4.22 are 0, so we can use Adam's operations [29], defined at space level, to decompose this column directly. This means that the right column of the above diagram is a complex whose Zariski sheafification is a flasque resolution. 

It's obvious that the left column of the above diagram is a complex whose its Zariski sheafification is a flasque resolution. The differential $\partial_{\varepsilon}$ of the middle column of theorem 4.22 satisfies $\partial_{\varepsilon}=(\delta,\partial)$,
where $\delta$ and $\partial$ are differentials of the left and right columns respectively.
Using corollary 4.19, the middle column is the direct sum of the left and right one. So the middle    column of the above diagram also is a complex whose Zariski sheafification is a flasque resolution.
\end{proof}

In particular, we are interested in  the ``Milnor K-theory''. letting $i=q$, one have the following theorem.

\begin{theorem}
 There exists the following splitting commutative diagram, each column is a complex whose Zariski sheafification is a flasque resolution. Here we assume the integer $q$ satisfies $1 \leq q \leq d$.
{\footnotesize
\[
  \begin{CD}
     0 @. 0 @. 0\\
     @VVV @VVV @VVV\\
     \Omega_{X/ \mathbb{Q}}^{q-1} @<<< K^{M}_{q}(X[\varepsilon]) @<<< K^{M}_{q}(X) \\
     @VVV @VVV @VVV\\
     \Omega_{k(X)/ \mathbb{Q}}^{q-1} @<<<  K^{M}_{q}(k(X)[\varepsilon]) @<<< K^{M}_{q}(k(X)) \\
     @VVV @VVV @VVV\\
     \oplus_{x \in X^{(1)}}H_{x}^{1}(\Omega_{X/\mathbb{Q}}^{q-1}) @<<< \oplus_{x[\varepsilon]\in X[\varepsilon]^{(1)}}K^{M}_{q-1}(O_{X,x}[\varepsilon] \ on \ x[\varepsilon]) @<<<  \oplus_{x \in X^{(1)}}K^{M}_{q-1}(O_{X,x} \ on \ x)\\
     @VVV @VVV @VVV\\
      \dots @<<< \dots @<<< \dots \\ 
     @VVV @VVV @VVV\\
     \oplus_{x \in X^{(q-1)}}H_{x}^{q-1}(\Omega_{X/ \mathbb{Q}}^{q-1}) @<<< \oplus_{x[\varepsilon] \in X[\varepsilon]^{(q-1)}}K^{M}_{1}(O_{X,x}[\varepsilon] \ on \ x[\varepsilon]) @<<< \oplus_{x \in X^{(q-1)}}K^{M}_{1}(O_{X,x} \ on \ x) \\
     @VVV @Vd_{1,q,\varepsilon}^{q-1,-q}VV @Vd_{1,q}^{q-1,-q}VV\\
     \oplus_{x \in X^{(q)}}H_{x}^{q}(\Omega_{X/ \mathbb{Q}}^{q-1}) @<<< \oplus_{x[\varepsilon] \in X[\varepsilon]^{(q)}}K^{M}_{0}(O_{X,x}[\varepsilon] \ on \ x[\varepsilon]) @<<< \oplus_{x \in X^{(q)}}K^{M}_{0}(O_{X,x} \ on \ x) \\
     @VVV @Vd_{1,q,\varepsilon}^{q,-q}VV @Vd_{1,q}^{q,-q}VV\\
     \oplus_{x \in X^{(q+1)}}H_{x}^{q+1}(\Omega_{X/ \mathbb{Q}}^{q-1}) @<<< \oplus_{x[\varepsilon] \in X[\varepsilon]^{(q+1)}}K^{M}_{-1}(O_{X,x}[\varepsilon] \ on \ x[\varepsilon]) @<<< \oplus_{x \in X^{(q+1)}}K^{M}_{-1}(O_{X,x} \ on \ x) \\
     @VVV @VVV @VVV\\
     \dots @<<< \dots @<<< \dots \\ 
     @VVV @VVV @VVV\\
     \oplus_{x\in X^{(d)}}H_{x}^{d}(\Omega_{X/ \mathbb{Q}}^{q-1}) @<<< \oplus_{x[\varepsilon]\in X[\varepsilon]^{(d)}}K^{M}_{q-d}(O_{X,x}[\varepsilon] \ on \ x[\varepsilon]) @<<<  \oplus_{x \in X^{(d)}}K^{M}_{q-d}(O_{X,x} \ on \ x) \\
     @VVV @VVV @VVV\\
      0 @. 0 @. 0
  \end{CD}
\]
}
\end{theorem}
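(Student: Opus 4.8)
The plan is to deduce this diagram as the special case $i=q$ of the diagram in Theorem 4.30, after matching the relevant Adams eigen-spaces with Milnor K-theory in the sense of Definition 4.2.

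First I would record the bookkeeping that makes the specialization legitimate. For a point $x\in X^{(k)}$ one has $\dim O_{X,x}=k$, so by Definition 4.2 the Milnor K-group with support is $K^{M}_{q-k}(O_{X,x}\ \mathrm{on}\ x)=K^{(q-k+k)}_{q-k}(O_{X,x}\ \mathrm{on}\ x)=K^{(q)}_{q-k}(O_{X,x}\ \mathrm{on}\ x)$, and similarly $K^{M}_{q}(X)=K^{(q)}_{q}(X)$ and $K^{M}_{q}(k(X))=K^{(q)}_{q}(k(X))$. Thus every entry of the middle and right columns of the sought diagram is literally the $i=q$ entry of the corresponding column in Theorem 4.30; after tensoring with $\mathbb{Q}$, Soul\'e's Theorem 4.1 then identifies these eigen-spaces with the honest Milnor K-theory sheaves in the classical range.

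Next I would identify the left column. Substituting $i=q$ into the formula of Theorem 4.30, the condition $\tfrac{q}{2}<i\leq q$ is exactly $q\geq 1$ (which is assumed), and it yields $\Omega^{\bullet,(q)}_{X/\mathbb{Q}}=\Omega^{2q-q-1}_{X/\mathbb{Q}}=\Omega^{q-1}_{X/\mathbb{Q}}$. Hence the left column of Theorem 4.30 specializes to the Cousin complex $0\to\Omega^{q-1}_{X/\mathbb{Q}}\to\Omega^{q-1}_{k(X)/\mathbb{Q}}\to\bigoplus_{x\in X^{(1)}}H^{1}_{x}(\Omega^{q-1}_{X/\mathbb{Q}})\to\cdots$. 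The same computation, via Corollary 4.19, produces the relative entries: for $x\in X^{(k)}$ one has $m+j=(q-k)+k=q$, so $K^{(q)}_{q-k}(O_{X,x}[\varepsilon]\ \mathrm{on}\ x[\varepsilon],\varepsilon)=H^{k}_{x}(\Omega^{q-1}_{O_{X,x}/\mathbb{Q}})$, matching the displayed left column.

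Finally, the flasque-resolution statement for each of the three columns, together with the commutativity and the splitting of the diagram, are inherited directly from Theorem 4.30, since the present diagram is the $i=q$ summand of the direct-sum decomposition established there (the differentials respecting Adams operations by the hypothesis underlying Theorem 4.30, so the refined Gersten differentials restrict compatibly to the eigen-pieces). The restriction $1\leq q\leq d$ serves only to put the complexes in the displayed shape, so that the $K^{M}_{0}$-term sits at codimension $q$ and the $K^{M}_{-1}$-term at codimension $q+1$, etc. I expect the one point genuinely needing care to be the index identification ``$i=q\leftrightarrow m+j$'' at \emph{every} spot of the column --- in particular that the negative-degree entries $K^{M}_{q-k}(O_{X,x}\ \mathrm{on}\ x)$ for $k>q$ are precisely the eigen-spaces $K^{(q)}_{q-k}$ occurring in Theorem 4.30 --- after which the assertion reduces entirely to Theorem 4.30.
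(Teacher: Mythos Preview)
Your proposal is correct and matches the paper's own approach: the paper introduces this theorem with the sentence ``letting $i=q$, one has the following theorem,'' i.e.\ it is obtained as the $i=q$ specialization of the preceding Adams-eigenspace diagram (Theorem~4.28 in the paper's numbering; your ``Theorem~4.30''). Your explicit bookkeeping of the identifications $K^{M}_{q-k}(O_{X,x}\ \mathrm{on}\ x)=K^{(q)}_{q-k}(O_{X,x}\ \mathrm{on}\ x)$ via Definition~4.2 and of $\Omega^{\bullet,(q)}_{X/\mathbb{Q}}=\Omega^{q-1}_{X/\mathbb{Q}}$ is exactly what the paper leaves implicit.
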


The middle and right columns are complexes, so the definition 4.4 applies. 
\begin{definition}
The Milnor K-theoretic {\it q-cycles} and {\it rational equivalence} of $(X,O_{X})$ are defined to be 
\[
 Z^{M}_{q}(D^{perf}(X)):= Ker(d_{1,q}^{q,-q})
\]
\[
 Z_{q,rat}^{M}(D^{perf}(X)):=Im(d_{1,q}^{q-1,-q}).
\]

The $q^{th}$ Milnor K-theoretic Chow group  of $(X,O_{X})$ is defined to be

\[
 CH^{M}_{q}(D^{perf}(X)):= \dfrac{Z^{M}_{q}(D^{perf}(X))}{Z_{q,rat}^{M}(D^{perf}(X))}.
\]

The Milnor K-theoretic {\it q-cycles} and {\it rational equivalence} of $(X,O_{X}[\varepsilon])$ are defined to be
\[
 Z^{M}_{q}(D^{perf}(X[\varepsilon])):= Ker(d_{1,q,\varepsilon}^{q,-q})
\]
\[
 Z_{q,rat}^{M}(D^{perf}(X[\varepsilon])):=Im(d_{1,q,\varepsilon}^{q-1,-q}).
\]

The $q^{th}$ Milnor K-theoretic Chow group  of $(X,O_{X}[\varepsilon])$ is defined to be

\[
 CH^{M}_{q}(D^{perf}(X[\varepsilon])):= \dfrac{Z^{M}_{q}(D^{perf}(X[\varepsilon]))}{Z_{q,rat}^{M}(D^{perf}(X[\varepsilon]))}.
\]

\end{definition}

\textbf{Agreement}. We now prove that our  Milnor K-theoretic Chow group agrees with the classical ones for regular schemes, after tensoring with $\mathbb{Q}$. 
\begin{theorem} 
For $X$ is a regular scheme of finite type over a field $k$, let $Z^{q}(X)$, $Z_{rat}^{q}(X)$ and $CH^{q}(X)$ denote the classical q-cycles, rational equivalence and Chow groups respectively, after tensoring with $\mathbb{Q}$, then we have the following identifications
\[
 Z_{q}^{M}(D^{perf}(X))= Z^{q}(X)
\]

\[
  Z_{q,rat}^{M}(D^{perf}(X)) = Z_{rat}^{q}(X)
\]

\[
 CH^{M}_{q}(D^{perf}(X)) = CH^{q}(X).
\]
\end{theorem}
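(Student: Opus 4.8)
The plan is to reduce the Milnor $K$-theoretic Gersten complex of $X$ --- the right-hand column of the diagram in Theorem 4.29, i.e.\ the weight-$q$ specialization ($i=q$) of the refined Gersten complex of Definition 4.4 --- to the classical rationalized Milnor--Gersten complex of $X$, and then to read the three assertions off from the identifications already established in Theorem 3.8. Throughout, all groups are tensored with $\mathbb{Q}$, as in Definition 4.2 and in the statement.

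First I would identify the terms. For $x\in X^{(p)}$ one has $\dim O_{X,x}=p$, so Definition 4.2 gives $K_{q-p}^{M}(O_{X,x}\ on\ x)=K_{q-p}^{(q)}(O_{X,x}\ on\ x)$, and Soul\'e's Riemann--Roch without denominator (Theorem 4.3) yields $K_{q-p}^{(q)}(O_{X,x}\ on\ x)=K_{q-p}^{(q-p)}(k(x))$. At the three positions $p=q-1,q,q+1$ that control the assertions below this is elementary: it returns $K_{1}^{(1)}(k(x))=k(x)^{\ast}$, $K_{0}^{(0)}(k(x))=\mathbb{Z}$, and $K_{-1}^{(-1)}(k(x))=0$ (the residue fields, being regular, have no negative $K$-theory); for the other positions $p<q$ one invokes Soul\'e's theorem (Theorem 4.1) over $k(x)$ to identify $K_{q-p}^{(q-p)}(k(x))$ with $K_{q-p}^{M}(k(x))$. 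Thus the column is identified, term by term, with
\[
 0 \to K_{q}^{M}(k(X)) \to \bigoplus_{x\in X^{(1)}}K_{q-1}^{M}(k(x)) \to \dots \to \bigoplus_{x\in X^{(q)}}K_{0}^{M}(k(x)) \to 0.
\]

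Next I would read off the groups. Since the target $\bigoplus_{x\in X^{(q+1)}}K_{-1}^{M}(O_{X,x}\ on\ x)$ of $d_{1,q}^{q,-q}$ is zero, one gets $Z_{q}^{M}(D^{perf}(X))=\ker(d_{1,q}^{q,-q})=\bigoplus_{x\in X^{(q)}}K_{0}^{M}(O_{X,x}\ on\ x)=\bigoplus_{x\in X^{(q)}}\mathbb{Q}=Z^{q}(X)$. For rational equivalence, note that the source $\bigoplus_{x\in X^{(q-1)}}K_{1}^{M}(O_{X,x}\ on\ x)=\bigoplus_{x\in X^{(q-1)}}k(x)^{\ast}\otimes\mathbb{Q}$ and the target $\bigoplus_{x\in X^{(q)}}K_{0}^{M}(O_{X,x}\ on\ x)$ of $d_{1,q}^{q-1,-q}$ are the \emph{whole} rationalized terms $\bigoplus_{x\in X^{(q-1)}}K_{1}(O_{X,x}\ on\ x)\otimes\mathbb{Q}$ and $\bigoplus_{x\in X^{(q)}}K_{0}(O_{X,x}\ on\ x)\otimes\mathbb{Q}$ of Quillen's Gersten complex, since by Theorem 4.3 the remaining eigen-components vanish ($K_{1}^{(i)}(k(x))=0$ for $i\neq1$, $K_{0}^{(i)}(k(x))=0$ for $i\neq0$). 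As Theorem 4.29 exhibits $d_{1,q}^{q-1,-q}$ as the weight-$q$ part of Quillen's differential $d_{1}^{q-1,-q}$, it agrees with $d_{1}^{q-1,-q}\otimes\mathbb{Q}$ on these positions, and Theorem 3.8 (following Quillen [25]) identifies the image of the latter with rational equivalence; hence $Z_{q,rat}^{M}(D^{perf}(X))=\operatorname{Im}(d_{1,q}^{q-1,-q})=Z_{rat}^{q}(X)$. Taking the quotient gives $CH_{q}^{M}(D^{perf}(X))=Z_{q}^{M}/Z_{q,rat}^{M}=Z^{q}(X)/Z_{rat}^{q}(X)=CH^{q}(X)$.

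The step carrying the real weight is the appeal to Theorem 4.29: that the Adams decomposition is compatible with the Gersten differentials, so the weight-$q$ eigen-complex is a genuine direct summand of the rationalized Gersten complex and $d_{1,q}^{\bullet,-q}$ is literally the restriction of Quillen's $d_{1}^{\bullet,-q}$. Granting that, what remains is the elementary bookkeeping done above --- that at the two positions controlling $Z_{q}^{M}$ and $Z_{q,rat}^{M}$ the rationalized residue-field contributions are pure of weight $q$ (because $K_0$ and $K_1$ of a field lie in weights $0$ and $1$). An alternative would be to first deduce the Chow-group identification from Bloch's formula (Theorem 4.32) and then peel off the cycle and rational-equivalence statements separately, but that requires the same bookkeeping, so I would run the weight analysis directly.
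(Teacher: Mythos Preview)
Your proof is correct and follows essentially the same route as the paper: reduce the Milnor Gersten complex to the classical one via Soul\'e's Riemann--Roch without denominator (Theorem 4.3), then identify $K_{0}^{M}(k(x))=K_{0}(k(x))$ and $K_{1}^{M}(k(x))=K_{1}(k(x))$ at the two relevant positions and invoke Quillen's description of rational equivalence. Your version is in fact more careful than the paper's --- you make explicit why $d_{1,q}^{q-1,-q}$ coincides with the full Quillen differential (because the other Adams eigen-pieces of $K_{0}$ and $K_{1}$ of a field vanish) and why the target of $d_{1,q}^{q,-q}$ is zero --- but the underlying argument is the same.
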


\begin{proof}
Since $X$ is a regular, Soul\'e's Riemann-Roch without denominator [29, or theorem 4.3] shows that the right column of theorem 4.29 agrees with the following classical sequence, ignoring torsion

\begin{footnotesize}
\begin{align*}
 0 \to &K^{M}_{q}(O_{X}) \to K^{M}_{q}(k(X)) \to \bigoplus_{x \in X^{(1)}}K^{M}_{q-1}(k(x)) \to \dots   \\
 &\to \bigoplus_{x \in X^{(q-1)}}K^{M}_{1}(k(x)) \xrightarrow{d_{1,q}^{q-1,-q}} \bigoplus_{x \in X^{(q)}}K^{M}_{0}(k(x)) \xrightarrow{d_{1,q}^{q,-q}} 0.
\end{align*}
\end{footnotesize}

Noting $K^{M}_{0}(k(x))= K_{0}(k(x))=\mathbb{Z}$, one has
\[
  Z^{M}_{q}(D^{perf}(X))= Ker(d_{1}^{q,-q})= \bigoplus_{x \in X^{(q)}}K^{M}_{0}(k(x)) = Z^{q}(X).
\]

Since $K^{M}_{1}(k(x))= K_{1}(k(x))$, as explained in Quillen's proof of Bloch's formula [25], the image of $d_{1}^{q-1,-q}$ gives the rational equivalence. Hence, 
\[
 Z^{M}_{q,rat}(D^{perf}(X)) = Im(d_{1}^{q-1,-q}) = Z^{q}_{rat}(X).
\]

Therefore, we have the following identification
\[
  CH^{M}_{q}(D^{perf}(X)) = CH^{q}(X).
\]
\end{proof}

Also we obtain Bloch's formulas. This gives a positive answer to Green-Griffiths' \textbf{Question 1.1} on page 2.

\begin{theorem}

\textbf{Bloch's formula}

After tensoring with $\mathbb{Q}$, we have the following identifications:
\[ 
 CH^{M}_{q}(D^{perf}(X)) = H^{q}(X,K_{q}^{(q)}(O_{X})) = H^{q}(X,K_{q}^{M}(O_{X})).
\]

 \[
  CH^{M}_{q}(D^{perf}(X[\varepsilon]))=H^{q}(X,K_{q}^{(q)}(O_{X}[\varepsilon]))= H^{q}(X,K_{q}^{M}(O_{X}[\varepsilon])) .
 \]
\end{theorem}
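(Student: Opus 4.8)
The plan is to extract the statement from the structural results already in hand, by the same argument that derived Theorem 4.25 from Theorems 4.22 and 4.23. By Definition 4.30, $CH^{M}_{q}(D^{perf}(X))$ is by construction the degree-$q$ cohomology of the right-hand column of the diagram of Theorem 4.29, and $CH^{M}_{q}(D^{perf}(X[\varepsilon]))$ is the degree-$q$ cohomology of the middle column. Theorem 4.29 asserts that the Zariski sheafification of each of these columns is a flasque resolution; reading off the augmentation terms, the sheaves being resolved are $\underline{K}^{(q)}_{q}(O_{X})$ for the right column and $\underline{K}^{(q)}_{q}(O_{X}[\varepsilon])$ for the middle one. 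Since a flasque resolution computes sheaf cohomology, this yields at once
\[
 CH^{M}_{q}(D^{perf}(X)) = H^{q}(X, K^{(q)}_{q}(O_{X})) \quad\text{and}\quad CH^{M}_{q}(D^{perf}(X[\varepsilon])) = H^{q}(X, K^{(q)}_{q}(O_{X}[\varepsilon])),
\]
where on the right one uses that $X$ and $X[\varepsilon]$ have the same underlying topological space.

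It then remains only to replace the Adams-eigenspace sheaves by the symbolic Milnor K-theory sheaves, after tensoring with $\mathbb{Q}$. For the regular scheme $X$ this is precisely Soul\'e's Theorem 4.1, $\underline{K}^{(q)}_{q}(O_{X}) = \underline{K}^{M}_{q}(O_{X})$ modulo torsion, which gives the first identity. For the thickening $X[\varepsilon]$ I would use the retraction $\varepsilon = 0$ to split $\underline{K}^{(q)}_{q}(O_{X}[\varepsilon]) = \underline{K}^{(q)}_{q}(O_{X}) \oplus \underline{K}^{(q)}_{q}(O_{X}[\varepsilon],\varepsilon)$, where the relative summand, computed degreewise by Corollary 4.19, is precisely the Cousin complex of $\Omega^{q-1}_{X/\mathbb{Q}}$ --- this is the left column of the diagram of Theorem 4.29. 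Combining with Soul\'e's identification gives $\underline{K}^{(q)}_{q}(O_{X}[\varepsilon]) = \underline{K}^{M}_{q}(O_{X}) \oplus \Omega^{q-1}_{X/\mathbb{Q}}$ rationally, which matches the symbolic Milnor K-theory sheaf of the dual numbers (the $m=1$ instance of the decomposition $\underline{K}^{M}_{q}(O_{X}[\varepsilon]) = \underline{K}^{M}_{q}(O_{X}) \oplus \Omega^{q-1}_{X/\mathbb{Q}}$ recorded in the Introduction), so $\underline{K}^{(q)}_{q}(O_{X}[\varepsilon]) = \underline{K}^{M}_{q}(O_{X}[\varepsilon])$ modulo torsion as well. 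Passing to $H^{q}$ gives the second identity.

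The substantive work underlying this has already been done upstream, in Theorem 4.29 and its ingredient Theorem 4.28: namely, the fact that passing to the $\psi^{k} = k^{q}$ eigenspace of the flasque Gersten resolution of Theorems 4.22 and 4.23 again yields a complex whose Zariski sheafification is a flasque resolution. That in turn rests on the eigenspace computations of relative (negative) cyclic homology in section 4.2 (Theorem 4.5, Corollary 4.7) and on the Goodwillie-Cathelineau isomorphisms with supports of section 4.3 (Theorem 4.16, Corollary 4.19). Granting those, the proof of Theorem 4.32 itself is essentially bookkeeping, and I expect the only point that genuinely deserves care to be the sheaf-theoretic identification: one must check that the sheaf being resolved is the symbolic Milnor K-theory sheaf occurring in Green-Griffiths' Question 1.1, not merely its Adams-eigenspace avatar, so the argument should make the natural comparison map from Milnor symbols to the weight-$q$ eigenspace explicit and verify it is a rational isomorphism in both cases --- by Soul\'e on $X$, and by the relative cyclic-homology computation on $X[\varepsilon]$.
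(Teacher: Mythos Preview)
Your proposal is correct and follows the same route as the paper: the paper's proof is the single sentence ``Immediately from the flasque resolutions of sheafifications of columns in theorem 4.29,'' which is exactly your first paragraph. Your additional care in justifying $\underline{K}^{(q)}_{q}(O_{X}[\varepsilon]) = \underline{K}^{M}_{q}(O_{X}[\varepsilon])$ rationally (via the splitting and the relative computation) addresses a point the paper leaves implicit, relying on Definition~4.2 and the decomposition stated in the Introduction; the paper does not spell this out in the proof itself.
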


\begin{proof}
Immediately from the flasque resolutions of sheafifications of columns in theorem 4.29.
\end{proof}

Now, we define the tangent space to Milnor K-theoretic Chow group.

\begin{definition}
 The tangent space to $CH^{M}_{q}(D^{perf}(X))$, denoted by $T_{f}CH^{M}_{q}(D^{perf}(X))$, is defined to be
\[
 T_{f}CH^{M}_{q}(D^{perf}(X)) := Ker \{CH^{M}_{q}(D^{perf}(X[\varepsilon])) \xrightarrow{\varepsilon =0} CH^{M}_{q}(D^{perf}(X)) \}.
\]
\end{definition}

Recall that one can formally  define tangent space to the classical Chow group as 
\[
 T_{f}CH^{q}(X)= H^{q}(X,TK_{q}^{M}(O_{X}))= H^{q}(X, \Omega_{O_{X}/ \mathbb{Q}}^{q-1}).
\]

Our definition agrees with it: 
\begin{theorem}
\[
 T_{f}CH^{M}_{q}(D^{perf}(X)) = H^{q}(X, \Omega_{O_{X}/ \mathbb{Q}}^{q-1}) = T_{f}CH^{q}(X).
\]
\end{theorem}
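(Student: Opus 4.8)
The plan is to reduce the statement to the diagram of Theorem 4.29 (with $i=q$) together with the tangent space computation already carried out in Theorem 4.27. Concretely, I would first unwind the definition: by Definition 4.33, $T_{f}CH^{M}_{q}(D^{perf}(X))$ is the kernel of the map $CH^{M}_{q}(D^{perf}(X[\varepsilon])) \xrightarrow{\varepsilon=0} CH^{M}_{q}(D^{perf}(X))$, where both Chow groups are the cohomology (at the relevant spot) of the middle and right columns of the diagram in Theorem 4.29. Since each column of that diagram is a complex whose Zariski sheafification is a flasque resolution, $CH^{M}_{q}(D^{perf}(X[\varepsilon]))$ computes $H^{q}(X,K^{M}_{q}(O_{X}[\varepsilon]))$ and $CH^{M}_{q}(D^{perf}(X))$ computes $H^{q}(X,K^{M}_{q}(O_{X}))$ (this is exactly Theorem 4.32).

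Next I would use the splitting of the columns. The key input is Corollary 4.19 (and its consequence recorded in the proof of Theorem 4.28): the middle column of the diagram is the direct sum of the left column — the Cousin complex of $\Omega^{q-1}_{X/\mathbb{Q}}$, whose sheafification is the flasque resolution computing $H^{q}(X,\Omega^{q-1}_{O_{X}/\mathbb{Q}})$ — and the right column, with the projection $\varepsilon=0$ corresponding to the projection onto the right summand. Taking cohomology at the $q$-th spot and using that the splitting is compatible with the differentials $d^{q-1,-q}_{1,q}$ and $d^{q,-q}_{1,q}$ appearing in the diagram, one gets a short exact sequence
\[
0 \to H^{q}(X,\Omega^{q-1}_{O_{X}/\mathbb{Q}}) \to CH^{M}_{q}(D^{perf}(X[\varepsilon])) \xrightarrow{\varepsilon=0} CH^{M}_{q}(D^{perf}(X)) \to 0,
\]
which immediately identifies the kernel of $\varepsilon=0$ with $H^{q}(X,\Omega^{q-1}_{O_{X}/\mathbb{Q}})$. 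Finally, I would invoke the stated formal definition $T_{f}CH^{q}(X)= H^{q}(X,\Omega^{q-1}_{O_{X}/\mathbb{Q}})$ to conclude $T_{f}CH^{M}_{q}(D^{perf}(X)) = H^{q}(X,\Omega^{q-1}_{O_{X}/\mathbb{Q}}) = T_{f}CH^{q}(X)$.

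I expect the main obstacle to be bookkeeping around the splitting rather than any deep new idea: one must check that the decomposition of the middle column as (left)$\oplus$(right) from Corollary 4.19 is genuinely a decomposition \emph{as complexes} (i.e. respects the vertical differentials in Theorem 4.29, not just the objects), so that passing to cohomology is exact and the resulting sequence of $CH^{M}_q$-groups really is short exact with the claimed splitting. This is asserted in the proof of Theorem 4.28 via the identity $\partial_{\varepsilon}=(\delta,\partial)$ for the differentials, so I would lean on that; the remaining subtlety is that $CH^{M}_{q}$ is a subquotient (kernel mod image) rather than a single cohomology group of one complex, so strictly one argues that both $Z^{M}_{q}$ and $Z^{M}_{q,rat}$ split compatibly, hence so does the quotient, and the kernel of $\varepsilon=0$ on the quotient is the $\Omega^{q-1}$-part of $Z^{M}_{q}(D^{perf}(X[\varepsilon]))$ modulo its $\Omega^{q-1}$-part of $Z^{M}_{q,rat}(D^{perf}(X[\varepsilon]))$, i.e. $H^{q}(X,\Omega^{q-1}_{O_{X}/\mathbb{Q}})$. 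Everything else is diagram chasing of the sort already performed for Theorem 4.27.
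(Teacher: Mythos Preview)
Your proposal is correct and follows exactly the route the paper intends: the paper's own proof is the two words ``Diagram chasing. Immediately from theorem 4.29,'' and what you have written is a faithful unpacking of that, using the splitting $\partial_{\varepsilon}=(\delta,\partial)$ from the proof of Theorem 4.28, the flasque resolutions/Bloch's formula of Theorem 4.32, and the formal definition of $T_{f}CH^{q}(X)$.
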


\begin{proof}
Diagram chasing. Immediately from theorem 4.29.
\end{proof}

We continue studying the geometry of these Milnor K-theoretic Chow groups and their tangent spaces in forthcoming papers.

\end{document}